\title[Extended equivariant  Picard complexes]
{ Extended equivariant Picard complexes\\
and homogeneous spaces
}
\author{Mikhail Borovoi}
\address{Borovoi:
Raymond and Beverly Sackler School of Mathematical Sciences,
Tel Aviv University, 69978 Tel Aviv, Israel}
\email{borovoi@post.tau.ac.il}
\thanks{Joost van Hamel passed away in January 2008}
\thanks{Borovoi was partially supported by the
Hermann Minkowski Center for Geometry
and by ISF grant 807/07}
\author{Joost van Hamel}
\address{van Hamel: K.U. Leuven, Departement Wiskunde,
  Celestijnenlaan 200B, B-3001 Leuven (Heverlee), Belgium}
\thanks{This scientific collaboration was
supported by Project  G.0318.06 of the Flemish Science Fund FWO}
\subjclass[2000]{Primary: 14M17; Secondary 14C22, 20G15, 18E30}
\keywords{Extended equivariant Picard complex, homogeneous space, linear algebraic group, derived category}
\DeclareSymbolFont{rsfs}{U}{rsfs}{m}{n}
\DeclareSymbolFontAlphabet{\mathcal}{rsfs}
\newcommand{\scH}{{\mathcal{H}{\mkern-3.5mu}}} 
\DeclareTextFontCommand{\textcyr}{\fontencoding{OT2}
    \fontfamily{wncyr}\fontseries{m}\fontshape{n}\selectfont}
\theoremstyle{plain}
\newtheorem{theorem}{Theorem}[section]
\newtheorem{proposition}[theorem]{Proposition}
\newtheorem{lemma}[theorem]{Lemma}
\newtheorem{corollary}[theorem]{Corollary}
\newtheorem{question}[theorem]{Question}
\newtheorem{conditional-result}[theorem]{Conditional Result}
\newtheorem{theorem?}{Theorem(?)}[section]
\newtheorem{proposition?}[theorem]{Proposition(?)}
\newtheorem{lemma?}[theorem]{Lemma(?)}
\newtheorem{corollary?}[theorem]{Corollary(?)}
\newtheorem{introtheorem}{Theorem}
\newtheorem{intromaintheorem}[introtheorem]{Main Theorem}
\newtheorem*{theorem*}{Theorem}
\newtheorem*{proposition*}{Proposition}
\newtheorem*{lemma*}{Lemma}
\newtheorem*{corollary*}{Corollary}
\newtheorem*{question*}{Question}
\newtheorem*{conjecture*}{Conjecture}
\newtheorem*{claim*}{Claim}
\newtheorem*{introtheorem*}{Theorem}
\newtheorem*{introproposition*}{Proposition}
\newtheorem*{introlemma*}{Lemma}
\newtheorem*{introcorollary*}{Corollary}
\theoremstyle{definition}
\newtheorem{definition}[theorem]{Definition}
\newtheorem{example}[theorem]{Example}
\newtheorem{construction}[theorem]{Construction}
\newtheorem{notation}[theorem]{Notation}
\newtheorem*{definition*}{Definition}
\newtheorem*{example*}{Example}
\newtheorem{subsec}[theorem]{}
\theoremstyle{remark}
\newtheorem{remark}[theorem]{Remark}
\newtheorem{remarks}[theorem]{Remarks}
\newcounter{thlistitem}
\renewcommand{\thethlistitem}{\roman{thlistitem}}
\newenvironment{theoremlist}{
\begin{list}{\makebox[1.5em]{\hfill\textup{(\thethlistitem)}}}%
{\usecounter{thlistitem}
\setlength{\leftmargin}{0em}
\setlength{\rightmargin}{0pt}
\setlength{\labelwidth}{-1em}
\setlength{\labelsep}{.5em}
}}{\end{list}}
\newcounter{assumlistitem}
\renewcommand{\theassumlistitem}{\roman{assumlistitem}}
  {\begin{list}
      {\hfill\textup{(\theassumlistitem)}}
      {\usecounter{assumlist}
       \setlength{\leftmargin}{0pt}
        \setlength{\rightmargin}{0pt}
        \setlength{\labelwidth}{-1.5em}}}
  {\end{list}}
\newtheorem*{remark*}{Remark}
\newtheorem*{Remarks*}{Remarks}
\newenvironment{remarks*}{\begin{Remarks*}\nopagebreak[4]
\rule{1em}{0ex}\par 
\begin{theoremlist}}%
{\end{theoremlist}\end{Remarks*}}
\newcommand{\mathb}[1]{\mathbf{#1}}
\newcommand{\capbar}{\overline}
\newcommand{\sH}{\mathcal{H}}
\newcommand{\sK}{\mathcal{K}}
\newcommand{\sO}{\mathcal{O}}
\newcommand{\Acat}{\mathcal{A}}
\newcommand{\fcolon}{\colon} 
\newcommand{\iso}{\cong}
\newcommand{\isoto}{\overset{\sim}{\to}}
\newcommand{\into}{\hookrightarrow}
\newcommand{\onto}{\twoheadrightarrow}
\newcommand{\labelTo}[1]{\xrightarrow{\makebox[4em]{\scriptsize ${#1}$}}}
\newcommand{\labelto}[1]{\xrightarrow{\makebox[1.5em]{\scriptsize ${#1}$}}}
\newcommand{\id}{\operatorname{id}}
\newcommand{\Zz}{{\mathb{Z}}}
\newcommand{\Qq}{{\mathb{Q}}}
\newcommand{\Cc}{{\mathb{C}}}
\DeclareMathOperator{\Hom}{Hom}
\DeclareMathOperator{\Ext}{Ext}
\DeclareMathOperator{\sHom}{{\scH\mathit{om}}}
\DeclareMathOperator*{\tensor}{\otimes}
\newcommand{\tors}{{}_{\textup{tors}}}
\newcommand{\hvar}{{-}} 
\newcommand{\Gm}{\mathb{G}_\mathrm{m}}
\newcommand{\uu}{^\mathrm{u}}
\def\red{^\mathrm{red}}
\def\tor{^{\mathrm{tor}}}
\newcommand{\scon}{^{\mathrm{sc}}}
\def\sss{^{\mathrm{ss}}}
\def\mult{^{\mathrm{mult}}}
\def\ssu{^{\mathrm{ssu}}}
\newcommand{\mm}{^{\mathrm{m}}}
\def\tors{_{\mathrm{tors}}}
\DeclareMathOperator{\Gal}{Gal}
\DeclareMathOperator{\Div}{Div}
\DeclareMathOperator{\Pic}{Pic}
\DeclareMathOperator{\divisor}{div}
\DeclareMathOperator{\Br}{Br}
\newcommand{\Bra}{\Br_\mathrm{a}}
\DeclareMathOperator{\Chi}{\mathbb{X}}
\DeclareMathOperator{\UPic}{UPic}
\DeclareMathOperator{\KDiv}{KDiv}
\newcommand{\OxDiv}{{\mathcal{O}_x}\textup{Div}}
\newcommand{\kbar}{{\bar{k}}}
\newcommand{\Xbar}{{\capbar{X}}}
\newcommand{\Ybar}{{\capbar{Y}}}
\newcommand{\Zbar}{{\capbar{Z}}}
\newcommand{\Gbar}{{\capbar{G}}}
\newcommand{\Hbar}{{\capbar{H}}}
\newcommand{\Tbar}{{\capbar{T}}}
\newcommand{\Fbar}{{\capbar{F}}}
\newcommand{\Pbar}{{\capbar{P}}}
\newcommand{\pibar}{{\overline{\pi}}}
\newcommand{\xb}{{\overline{x}}}
\def\Gmbar{{\capbar{\mathbf{G}}_{\textup{m}}}}
\newcommand{\updot}{^{\scriptscriptstyle{\bullet}}}
\newcommand{\upprimedot}{^{\prime\scriptscriptstyle{\bullet}}}
\renewcommand{\ggg}{{\mathfrak{g}}}
\def\Mbul{M\updot}
\def\Hombul{\Hom\updot}
\def\Ybul{Y\updot}
\newcommand{\XX}{{\Chi}}
\newcommand{\Zalg}{Z_{\textup{alg}}}
\newcommand{\Calg}{C_{\textup{alg}}}
\newcommand{\Balg}{B_{\textup{alg}}}
\newcommand{\Halg}{H_{\textup{alg}}}
\newcommand{\im}{{\textup{im\;}}}
\newcommand{\OX}{\sO(X)^\times}
\newcommand{\OXbar}{\sO(\Xbar)^\times}
\newcommand{\KX}{\sK(X)^\times}
\newcommand{\KXbar}{\sK(\Xbar)^\times}
\newcommand{\ZGOXbar}{\Zalg^1(\Gbar,\OXbar)}
\newcommand{\sL}{{\mathcal{L}}}
\newcommand{\sR}{{\mathcal{R}}}
\renewcommand{\div}{{\text{\rm div}}}
\def\xbar{\xb}
\def\alg{_{\text{\rm alg}}}
\def\Gmbar{\mathb{G}_{{\rm m},\kbar}}
\def\diag{{\rm diag}}
\def\der{{}^{\rm der}}
\def\Nbar{{\overline{N}}}
\newcommand{\res}{{\rm res}}
\newcommand{\Mbar}{{\overline{M}}}
\newcommand{\hh}{{\mathfrak{h}}}
\newcommand{\abar}{\overline{a}}
\begin{document}

\maketitle

\begin{abstract}
Let $k$ be a field of characteristic 0 and let $\kbar$
be a fixed algebraic closure of $k$.
Let $X$ be a smooth geometrically integral $k$-variety;
we set $\Xbar=X\times_k \kbar$ and denote by $\sK(\Xbar)$ the field of rational functions on $\Xbar$.
In \cite{BvH07} we defined  the \emph{extended Picard complex of $X$}
as the complex of $\Gal(\kbar/k)$-modules
$$
\UPic(\Xbar):=\left( \sK(\Xbar)^\times/\kbar^\times\labelto{\divisor}\Div(\Xbar)\right),
$$
where $\sK(\Xbar)^\times/\kbar^\times$ is in degree 0 and $\Div(\Xbar)$ is in degree 1.
We computed the isomorphism class of $\UPic(\Gbar)$ in the derived category of Galois modules
for a connected linear $k$-group $G$.

Here we compute the isomorphism class of $\UPic(\Xbar)$ in the derived category of Galois modules
when $X$ is a homogeneous space
of a connected linear $k$-group $G$ with $\Pic(\Gbar)=0$.
Let $\xb\in X(\kbar)$ and let $\Hbar$ denote the stabilizer of $\xb$ in $\Gbar$.
It is well known that the character group $\XX(\Hbar)$ of $\Hbar$ has a natural structure
of a Galois module.
We prove that
$$
\UPic(\Xbar)\cong\left( \XX(\Gbar)\labelto{{\rm res}}\XX(\Hbar)\right)
$$
in the derived category,
where res is the restriction homomorphism.
The proof is based on the notion of the extended equivariant Picard complex of a $G$-variety.
\end{abstract}

\section*{Introduction}
In this paper $k$ is always a field of characteristic 0, and $\kbar$ is a fixed algebraic closure of $k$.
A $k$-variety $X$ is always a geometrically integral $k$-variety, we set $\Xbar=X\times_k \kbar$.
We write $\sO(X)$ for the ring of regular functions on $X$,
and $\sK(X)$ for the field of rational functions on $X$.
By $\Div(X)$ we denote the group of Cartier divisors on $X$,
and by $\Pic(X)$ the Picard group of $X$.
By a $k$-group we mean a \emph{linear} algebraic group over $k$.

The \emph{extended Picard complex} $\UPic(\Xbar)$ of a smooth geometrically integral
variety $X$ over a field $k$
was introduced in the research announcement \cite{BvH06} and the paper \cite{BvH07}.
It is an object of the derived category of discrete Galois modules
(i.e. a complex of Galois modules) with cohomology
\begin{align*}
\sH^0(\UPic(\Xbar))  & = U(\Xbar) := \sO(\Xbar)^\times/\kbar^\times, \\
\sH^1(\UPic(\Xbar)) & = \Pic(\Xbar), \\
\sH^i(\UPic(\Xbar)) & = 0 \text{\ if $i \neq 0,1$.}
\end{align*}
This object $\UPic(\Xbar)$ is given by the complex of Galois modules
$$
\left[\sK(\Xbar)^\times/\kbar^\times\labelto{\divisor}\Div(\Xbar)\right\rangle,
$$
that is, the complex of length 2 with $\sK(\Xbar)^\times/\kbar^\times$ in degree 0
and $\Div(\Xbar)$ in degree 1; the differential $\divisor$ is the divisor map,
associating to the class $[f]$ of a rational function $f\in\sK(\Xbar)^\times$ the divisor $\divisor(f)\in\Div(\Xbar)$.
This complex plays an important role in understanding
arithmetic invariants such as the so-called \emph{algebraic part} $\Bra(X)$ of
the Brauer group $\Br(X)$ and the elementary obstruction of
Colliot-Th\'el\`ene and Sansuc \cite{CT-Sansuc} to the existence of
$k$-points in $X$.

In~\cite[Cor.~3]{BvH07} it was shown that when $X$ is
 a $k$-torsor under a  connected $k$-group $G$,
there is a canonical isomorphism (in the derived category of Galois modules)
\begin{equation}\label{eq:thm-old}
    \UPic(\Xbar) \iso [\Chi(\Tbar) \to \Chi(\Tbar\scon) \rangle,
  \end{equation}
where $[\Chi(\Tbar) \to \Chi(\Tbar\scon) \rangle$ is a certain complex constructed from $G$
using  a group-theoretic construction (see
Theorem~\ref{thm:old-main-result} below  for the precise statement).
In this way, a straightforward explanation was given why
this complex $[\Chi(\Tbar) \to \Chi(\Tbar\scon) \rangle$ played such an important role in the
study of Picard groups and Brauer groups and also of the {Brauer-Manin
obstruction} for torsors under linear algebraic groups.

The main result of the present paper is
the following theorem  announced in ~\cite[Theorem~3.1]{BvH06} concerning not necessarily
principal homogeneous spaces under a connected  $k$-group over a
field $k$ of characteristic 0.

\begin{intromaintheorem}[Theorem~\ref{th:upic-hom-space}]
\label{introth:upic-hom-space}
    Let $X$ be a right homogeneous space under a connected (linear) $k$-group $G$ with
   $\Pic(\Gbar) = 0$. Let $\Hbar\subset\Gbar$ be the  stabilizer of a geometric point  $\xb \in X(\kbar)$ in $\Gbar$
(we do not assume that $\Hbar$ is connected).
  Then we have a canonical isomorphism in the derived category of complexes of Galois modules
  \begin{equation*}
    \UPic(\Xbar) \iso [\Chi(\Gbar) \to \Chi(\Hbar) \rangle,
  \end{equation*}
 which is functorial in $G$ and $X$.
\end{intromaintheorem}

Here $\XX(\Gbar)$ and $\XX(\Hbar)$ are the character groups of $\Gbar$ and $\Hbar$, resp.
It is well-known that $\Chi(\Hbar)$ has a natural structure of a Galois module.
In the present paper we actually get this Galois structure
in a natural way by using V.L.~Popov's identification $\Chi(\Hbar) = \Pic_G(\Xbar)$,
where the right hand side is the \emph{equivariant Picard group} of $\Xbar$.
By $[\Chi(\Gbar) \to \Chi(\Hbar) \rangle$ we denote the complex of Galois modules
with $\XX(\Gbar)$ in degree 0 and with $\XX(\Hbar)$ in degree 1;
the differential $\Chi(\Gbar) \to \Chi(\Hbar)$ is the restriction of characters from $\Gbar$ to $\Hbar$.

 From Main Theorem \ref{introth:upic-hom-space}
we derive the main result of \cite{BvH07}, i.e the isomorphism
\eqref{eq:thm-old}
for  a principal homogeneous space $X$ of a connected $k$-group $G$.
Thus we obtain a new, more conceptual,
and less computational proof of this result.

We mention some applications and further results arising out of the main theorem.
\bigskip

\noindent{\bf Picard and Brauer groups}
\nopagebreak
\begin{introtheorem}[Theorem~\ref{cor:Pic-hom-space}]\label{intro-thm-Pic}
Let $X$ be a homogeneous space under a connected $k$-group $G$ with
  $\Pic(\Gbar) = 0$. Let $\Hbar$ be
the  stabilizer of a geometric point  $\xb \in X(\kbar)$
(we do not assume that $\Hbar$ is connected).
Then there is a canonical injection
$$
\Pic(X)\into H^1(k,[\XX(\Gbar)\to\XX(\Hbar)\rangle ),
$$
which is an isomorphism if $X(k)\neq\emptyset$ or $\Br(k)=0$.
\end{introtheorem}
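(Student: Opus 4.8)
The plan is to derive the statement formally from the Main Theorem (Theorem~\ref{th:upic-hom-space}) together with the general relation, established in \cite{BvH07}, between the Picard group of a variety and the hypercohomology of its extended Picard complex. Recall from \cite{BvH07} that for every smooth geometrically integral $k$-variety $X$ there is a canonical, functorial homomorphism
$$
\lambda_X\colon \Pic(X)\longrightarrow H^1(k,\UPic(\Xbar)),
$$
which is injective and whose cokernel is canonically embedded in $\ker(\Br(k)\to\Br(X))$; one obtains it by comparing the hypercohomology spectral sequence of the two-term complex $\UPic(\Xbar)$ with the Hochschild--Serre spectral sequence for $\Gm$ on $X$ (and using Hilbert's Theorem~90). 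A right homogeneous space $X$ under a connected $k$-group in characteristic $0$ is smooth and geometrically integral, so $\lambda_X$ is available in our situation.

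Next I apply Theorem~\ref{th:upic-hom-space}. Since $\Pic(\Gbar)=0$, it provides a canonical isomorphism $\UPic(\Xbar)\iso[\XX(\Gbar)\to\XX(\Hbar)\rangle$ in the derived category of Galois modules, functorial in $G$ and $X$, and hence a canonical isomorphism
$$
H^1(k,\UPic(\Xbar))\;\iso\;H^1(k,[\XX(\Gbar)\to\XX(\Hbar)\rangle).
$$
Composing with $\lambda_X$ produces the desired canonical injection
$$
\Pic(X)\into H^1(k,[\XX(\Gbar)\to\XX(\Hbar)\rangle),
$$
whose cokernel is again embedded in $\ker(\Br(k)\to\Br(X))$; it is functorial in $G$ and $X$ because both of its ingredients are.

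It remains to treat the two cases in which this injection is an isomorphism. If $\Br(k)=0$, then $\ker(\Br(k)\to\Br(X))=0$, so the cokernel vanishes. If $X(k)\neq\emptyset$, a rational point of $X$ provides a section of the structure morphism $X\to\Spec k$, whence the pullback $\Br(k)\to\Br(X)$ is split injective; so again $\ker(\Br(k)\to\Br(X))=0$ and the cokernel vanishes. In either case $\lambda_X$, and therefore the composite map, is an isomorphism.

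The genuine content lies entirely in Theorem~\ref{th:upic-hom-space}, which by this point is already at our disposal; the rest is a formal consequence of it and of the structural results of \cite{BvH07} about $\lambda_X$, which hold for arbitrary smooth geometrically integral varieties. Hence I anticipate no real obstacle here: the one point deserving attention is simply to make sure that the injection and its cokernel are canonical and functorial, and this reduces to the canonicity and functoriality of $\lambda_X$ and of the isomorphism of Theorem~\ref{th:upic-hom-space}.
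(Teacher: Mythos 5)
Your proposal is correct and follows essentially the same route as the paper: the paper's proof also combines Theorem~\ref{th:upic-hom-space} with the exact sequence $0\to\Pic(X)\to H^1(k,\UPic(\Xbar))\to\Br(k)\to\Br_1(X)\to\cdots$ from \cite[Prop.~2.19]{BvH07}, which is precisely your map $\lambda_X$ together with the identification of its cokernel inside $\ker(\Br(k)\to\Br(X))$. The only cosmetic difference is in the case $X(k)\neq\emptyset$: you argue via the retraction that $\Br(k)\to\Br(X)$ is injective, whereas the paper cites the fact that the connecting map $H^1(k,\UPic(\Xbar))\to\Br(k)$ vanishes; both are equivalent given the exact sequence.
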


Here $H^1$ denotes the first hypercohomology.

\begin{introtheorem}[Theorem~\ref{cor:br-hom-space}]\label{introth-Bra}
Let $X$, $G$, and $\Hbar$ be as in Theorem \ref{intro-thm-Pic}.
 Then  there is a canonical injection
  \begin{equation*}
  \Bra(X) \into H^2(k, [\Chi(\Gbar) \to \Chi(\Hbar) \rangle ),
  \end{equation*}
  which is an isomorphism if $X(k) \neq \emptyset$ or
  $H^3(k,\Gm) = 0$ (e.g., when $k$ is a number field or a
  $\mathfrak{p}$-adic field).
\end{introtheorem}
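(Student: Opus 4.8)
The plan is to derive Theorem~\ref{introth-Bra} formally from Main Theorem~\ref{introth:upic-hom-space}, combined with the general comparison, established in \cite{BvH07}, between the algebraic part of the Brauer group of a smooth geometrically integral $k$-variety and the second Galois hypercohomology of its extended Picard complex. Concretely, I would first recall from \cite{BvH07} the canonical, functorial homomorphism
\[
\Bra(X)\longrightarrow H^2(k,\UPic(\Xbar)),
\]
coming (via the Hochschild--Serre spectral sequence) from the comparison of the \'etale cohomology of $\Gm$ on $X$ with the Galois hypercohomology of the two-term complex $\UPic(\Xbar)$; this map is always injective, and it is bijective whenever $\Br(k)\to\Br(X)$ is injective --- in particular whenever $X(k)\neq\emptyset$ --- and also whenever $H^3(k,\Gm)=0$ (which holds, e.g., for $k$ a number field or a $\mathfrak{p}$-adic field).

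The hypotheses of Theorem~\ref{introth-Bra} --- namely $X$ a homogeneous space under a connected $k$-group $G$ with $\Pic(\Gbar)=0$ and $\Hbar$ the stabilizer of a point $\xb\in X(\kbar)$ --- are precisely those of Main Theorem~\ref{introth:upic-hom-space}. That theorem provides a canonical isomorphism $\UPic(\Xbar)\iso[\Chi(\Gbar)\to\Chi(\Hbar)\rangle$ in the derived category of Galois modules, functorial in $G$ and $X$. Since Galois hypercohomology is a functor on that derived category, applying $H^2(k,-)$ gives a canonical isomorphism
\[
H^2(k,\UPic(\Xbar))\ \iso\ H^2\bigl(k,[\Chi(\Gbar)\to\Chi(\Hbar)\rangle\bigr).
\]
Composing with the homomorphism of the previous paragraph yields the desired canonical injection $\Bra(X)\into H^2(k,[\Chi(\Gbar)\to\Chi(\Hbar)\rangle)$, an isomorphism under either stated hypothesis, with functoriality in $G$ and $X$ inherited from both ingredients. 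This is literally the argument for Theorem~\ref{intro-thm-Pic}, one cohomological degree lower, with ``$H^3(k,\Gm)=0$'' replaced by ``$\Br(k)=0$''.

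I do not expect a genuine obstacle here: once Main Theorem~\ref{introth:upic-hom-space} is in hand the proof is formal. The only point that deserves care is bookkeeping of canonicity --- one should check that the derived-category isomorphism of the Main Theorem is compatible, after applying $H^2(k,-)$, with the spectral-sequence construction of $\Bra(X)\to H^2(k,\UPic(\Xbar))$, and that the resulting composite is independent of the auxiliary choices (the geometric point $\xb$, injective resolutions, the chosen representative of the quasi-isomorphism). This compatibility is subsumed in the ``canonical and functorial'' assertions of the results invoked and requires no new geometric input.
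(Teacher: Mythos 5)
Your proposal follows the paper's proof: Theorem~\ref{cor:br-hom-space} is deduced exactly as you describe, by combining Theorem~\ref{th:upic-hom-space} with the six-term exact sequence of \cite[Prop.~2.19]{BvH07} relating $\Pic(X)$, $\Br_1(X)$ and the Galois hypercohomology of $\UPic(\Xbar)$. One small inaccuracy in your recollection of that ingredient: the injection $\Bra(X)\into H^2(k,\UPic(\Xbar))$ is onto precisely when the map $H^2(k,\UPic(\Xbar))\to H^3(k,\Gm)$ in that sequence vanishes, not when $\Br(k)\to\Br(X)$ is injective (the latter condition governs the degree-one statement for $\Pic(X)$); for $X(k)\neq\emptyset$ the required vanishing is supplied directly by \cite[Prop.~2.19]{BvH07} via the retraction a $k$-point provides, so both cases of the theorem are still covered.
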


This proves the conjecture
\cite[Conj.~3.2]{Borovoi:hasse-homogeneous} of the first-named author
  about the subquotient $\Bra(X) := \ker [ \Br(X) \to
  \Br(\Xbar)] / \im [\Br(k) \to \Br(X)]$ of the Brauer group $\Br(X)$ of a
  homogeneous space $X$.

\begin{Remarks*}\label{rem:Demarche}
(1)  C.~Demarche  \cite{Demarche} computed the group $\Bra(X,G)$ (introduced in \cite{BD})
for a homogeneous space $X$ with \emph{connected} geometric stabilizers
of a connected $k$-group $G$ with $\Pic(\Gbar) = 0$,
when  $X(k) \neq \emptyset$ or  $H^3(k,\Gm) = 0$.
Here $\Bra(X,G)=\Br_1(X,G)/\Br(k)$, where $\Br_1(X,G)$ is the kernel of the composed homomorphism
$\Br(X)\to\Br(\Xbar)\to\Br(\Gbar)$.

(2)  In \cite{Borovoi-vanishing} the first-named author uses Theorem \ref{introth-Bra} of the present paper
in order to find sufficient conditions for the Hasse principle and weak approximation
for a homogeneous space of a \emph{quasi-trivial} $k$-group over a number field $k$,
with connected or abelian geometric stabilizer $\Hbar$,
in terms of the Galois module $\XX(\Hbar)$,
see \cite[Corollaries 2.2 and 2.6]{Borovoi-vanishing}.
As a consequence, he finds sufficient conditions for the Hasse principle and weak approximation
for  \emph{principal} homogeneous spaces of a connected linear algebraic $k$-group $G$
(where $G$ is not assumed to be quasi-trivial or such that $\Pic(\Gbar)=0$)
in terms of the Galois module $\pi_1(\Gbar)$,
see \cite[Corollaries 2.9 and 2.10]{Borovoi-vanishing}.
Here $\pi_1(\Gbar)$ is the algebraic fundamental group of $G$ introduced in \cite{Borovoi:Memoir}.
\end{Remarks*}

\bigskip
\noindent{\bf Comparison with topological invariants.}
In \cite{BvH07} the complex $\UPic(\Gbar)$ of a connected $k$-group $G$ was shown to be
dual (in the derived sense) to the {algebraic fundamental group}
of $G$  introduced in \cite{Borovoi:Memoir}.
In particular, if we fix an embedding $\kbar \into \Cc$
and an isomorphism $\pi_1(\Cc^\times)\isoto\Zz$,
then we have a canonical isomorphism in the derived category
$$
\UPic(\Gbar)^D\iso\pi_1(G(\Cc)),
$$
where we denote by $\UPic(\Gbar)^D$ the dual object
(in the derived sense) to $\UPic(\Gbar)$.
Also for homogeneous spaces we  have a topological interpretation of
$\UPic(\Xbar)^D$.

\begin{introtheorem}[Theorem~\ref{th:upicd-p1p2}]\label{introth:topological}
    Let $X$ be a homogeneous space under a connected (linear) $k$-group $G$
with connected  geometric stabilizers.
Let us fix an embedding $\kbar \into \Cc$
and an isomorphism $\pi_1(\Cc^\times)\isoto\Zz$.
  \begin{theoremlist}
    \item
      We have a canonical isomorphism of  groups
      \begin{equation*}
         \sH^0(\UPic(\Xbar)^D)  \iso \pi_1(X(\Cc)).
      \end{equation*}
     \item
       We have a canonical isomorphism of abelian groups
       \begin{equation*}
         \sH^{-1}(\UPic(\Xbar)^D)  \iso \pi_2(X(\Cc))/\pi_2(X(\Cc))\tors,
       \end{equation*}
       where $\pi_2(X(\Cc))\tors$ is the torsion subgroup of $\pi_2(X(\Cc))$.
  \end{theoremlist}
\end{introtheorem}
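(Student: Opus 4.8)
The plan is to deduce the theorem from Main Theorem~\ref{introth:upic-hom-space} together with the homotopy exact sequence of the bundle $H(\Cc)\to G(\Cc)\to X(\Cc)$. First I would reduce to the case $\Pic(\Gbar)=0$: if it fails, by a standard construction choose a central extension of $k$-groups $1\to S\to G'\to G\to 1$ with $S$ a torus and $\Pic(\overline{G'})=0$; then $G'$ acts transitively on $X$, the stabilizer of $\xb$ in $\overline{G'}$ is an extension of $\Hbar$ by $\overline{S}$, hence connected, and neither $\UPic(\Xbar)$ nor the space $X(\Cc)$ depends on the acting group, so we may replace $(G,\Hbar)$ by this new pair. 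By Main Theorem~\ref{introth:upic-hom-space} we then have $\UPic(\Xbar)\iso[\XX(\Gbar)\labelto{\res}\XX(\Hbar)\rangle$ with $\XX(\Gbar)$ in degree $0$. Since $\Hbar$ is connected, $\XX(\Gbar)$ and $\XX(\Hbar)$ are finitely generated free abelian groups, so, writing $M^{\vee}=\Hom(M,\Zz)$, the derived dual $\UPic(\Xbar)^D=R\Hom(\UPic(\Xbar),\Zz)$ is computed termwise: $\UPic(\Xbar)^D\iso[\XX(\Hbar)^{\vee}\to\XX(\Gbar)^{\vee}]$, with $\XX(\Hbar)^{\vee}$ in degree $-1$, $\XX(\Gbar)^{\vee}$ in degree $0$, and differential the transpose $\res^{\vee}$ of $\res$. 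Hence
\[
\sH^{-1}(\UPic(\Xbar)^D)=\ker(\res^{\vee}),\qquad \sH^{0}(\UPic(\Xbar)^D)=\coker(\res^{\vee}).
\]

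On the topological side, $X(\Cc)=G(\Cc)/H(\Cc)$, and since $H$ is connected the projection $G(\Cc)\to X(\Cc)$ is a locally trivial principal $H(\Cc)$-bundle. Its homotopy exact sequence, the classical vanishing $\pi_2(G(\Cc))=0$ (a connected complex linear algebraic group is homotopy equivalent to a maximal compact subgroup, whose second homotopy group vanishes), and the triviality of $\pi_0(H(\Cc))$ together yield an exact sequence
\[
0\to\pi_2(X(\Cc))\to\pi_1(H(\Cc))\xrightarrow{i_*}\pi_1(G(\Cc))\to\pi_1(X(\Cc))\to 0,
\]
so $\pi_2(X(\Cc))=\ker(i_*)$ and $\pi_1(X(\Cc))=\coker(i_*)$.

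It then remains to match $i_*$ with $\res^{\vee}$ up to torsion. For a connected linear $\Cc$-group $L$ the unipotent radical is contractible, so $\pi_1(L(\Cc))=\pi_1(L\reductive(\Cc))$; the sequence $(L\reductive)\der\to L\reductive\to L\tor$, together with the vanishing of $\pi_2$ of a torus and of $\pi_0((L\reductive)\der(\Cc))$, gives an exact sequence $\pi_1((L\reductive)\der(\Cc))\to\pi_1(L(\Cc))\to\pi_1(L\tor(\Cc))\to0$ with finite first term; the last term is the cocharacter group, identified by the fixed isomorphism $\pi_1(\Cc^\times)\iso\Zz$ with $\XX(\Lbar)^{\vee}$, and it has the same rank as $\pi_1(L(\Cc))$, so we get a natural isomorphism $\pi_1(L(\Cc))/\pi_1(L(\Cc))\tors\iso\XX(\Lbar)^{\vee}$. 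Moreover the composite $\Hbar\into\Gbar\to\Gbar\tor$ factors through $\Hbar\tor$ (any homomorphism to a torus kills $[\Hbar,\Hbar]$ and $R_u(\Hbar)$), and on cocharacters the induced map $\XX(\Hbar)^{\vee}=\XX_*(\Hbar\tor)\to\XX_*(\Gbar\tor)=\XX(\Gbar)^{\vee}$ is precisely $\res^{\vee}$; applying $\pi_1((-)(\Cc))$ to the resulting commutative square, and using that $\Pic(\Gbar)=0$ makes $(\Gbar\reductive)\der$ simply connected so that $\pi_1(G(\Cc))=\XX(\Gbar)^{\vee}$ is torsion-free, we see that $i_*$ equals the composite $\pi_1(H(\Cc))\onto\XX(\Hbar)^{\vee}\xrightarrow{\res^{\vee}}\XX(\Gbar)^{\vee}=\pi_1(G(\Cc))$. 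Taking cokernels gives $\pi_1(X(\Cc))=\coker(\res^{\vee})=\sH^{0}(\UPic(\Xbar)^D)$, which is assertion~(i); and the short exact sequence $0\to\pi_1(H(\Cc))\tors\to\ker(i_*)\to\ker(\res^{\vee})\to0$, which splits since $\ker(\res^{\vee})$ is free, gives $\pi_2(X(\Cc))\tors=\pi_1(H(\Cc))\tors$ and $\pi_2(X(\Cc))/\pi_2(X(\Cc))\tors\iso\ker(\res^{\vee})=\sH^{-1}(\UPic(\Xbar)^D)$, which is assertion~(ii).

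The step I expect to cost the most effort is the identification of $\pi_1(L(\Cc))$ modulo torsion with $\XX(\Lbar)^{\vee}$: one must make it canonical and natural in $L$, and verify, with all orientation conventions (the chosen $\pi_1(\Cc^\times)\iso\Zz$) and the reduction to $\Pic(\Gbar)=0$ handled coherently, that the topological map $i_*$ really does correspond to the transpose of the restriction of characters under these identifications, under derived duality, and under the isomorphism of Main Theorem~\ref{introth:upic-hom-space}. The remaining arguments are bookkeeping and diagram chasing.
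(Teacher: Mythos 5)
Your proposal is correct and follows essentially the same route as the paper: reduce to $\Pic(\Gbar)=0$ via a central extension by a torus (the paper's Lemma~\ref{lem:CT}), dualize the complex $[\Chi(\Gbar)\to\Chi(\Hbar)\rangle$ from the Main Theorem (the paper's Lemma~\ref{lem:upicX-dual-les}, which simplifies as you note since $\Chi(\Hbar)$ is torsion-free for connected $\Hbar$), and compare with the homotopy exact sequence of $H(\Cc)\to G(\Cc)\to X(\Cc)$ using $\pi_2(G(\Cc))=0$ and the identification $\pi_1(L(\Cc))/\pi_1(L(\Cc))\tors\iso\Hom(\Chi(\Lbar),\Zz)$ (the paper's Lemma~\ref{lem:H1-pi1}). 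The only difference is presentational: you spell out the compatibility of the topological map $i_*$ with $\res^{\vee}$, which the paper asserts via a commutative diagram without detailed justification.
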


\begin{Remarks*}
 (1) We see from Theorem \ref{introth:topological}(i) that for a homogeneous space $X$
with connected geometric stabilizers, the topological fundamental group $\pi_1(X(\Cc))$
has a canonical structure of a Galois module.
This is closely related to the fact that any element of  $\Hom(\pi_1(\Cc^\times), \pi_1(X(\Cc))$ is \emph{algebraic},
i.e. can be represented by a regular map ${\mathb{G}}_{{\rm m},\kbar}\to\Xbar$, cf. \cite[Rem.~1.14]{Borovoi:Memoir}.

\item
(2) The assumption in Theorem \ref{introth:topological}(ii) that the geometric stabilizers are connected
can be somewhat relaxed, see Proposition \ref{prop:upicd-p1p2-H1},
but some condition must definitely be imposed, see Example \ref{ex:counter-ex-H1}.
\end{Remarks*}

\bigskip
\noindent\textbf{The elementary obstruction.}
In \cite{BvH07} it was shown that for any smooth geometrically integral $k$-variety $X$
the \emph{elementary obstruction}
(to the existence of a $k$-point in $X$)
as defined by Colliot-Th\'el\`ene and Sansuc \cite[D\'ef.\ 2.2.1]{CT-Sansuc}
(see also \cite[Introduction]{BCTS07})
may be identified with
a class $e(X)\in\Ext^1(\UPic(\Xbar),\kbar^\times)$
naturally arising from the construction of $\UPic(\Xbar)$.
With $X$, $G$ and $\Hbar$ as above,
  we have for every integer $ i$ a canonical isomorphism
  \begin{equation*}
    \Ext^i(\UPic(\Xbar), \kbar^\times) \iso
    H^i\left(k, \langle H\mm(\kbar) \to G\tor(\kbar)]\right)
  \end{equation*}
(Corollary \ref{cor:comp-ext1-upic-gm}).
Here $G\tor$ and $H\mm$ are the $k$-groups of multiplicative type
with character groups $\Chi(\Gbar)$ and $\Chi(\Hbar)$, resp.,
and  $\langle H\mm(\kbar) \to G\tor(\kbar)]$ is the complex
of Galois modules with $H\mm(\kbar)$ in degree $-1$ and $G\tor(\kbar)$ in degree 0,
dual to the complex $[\XX(\Gbar)\to\XX(\Hbar)\rangle$.

The complex $\langle H\mm(\kbar) \to G\tor(\kbar)]$ already appeared
in earlier work by the first-named author.
In \cite{Borovoi:hasse-homogeneous} the first-named author defined (by
means of explicit cocycles) an obstruction class $\eta(G,X) \in
H^1\left(k, \langle H\mm(\kbar) \to G\tor(\kbar)]\right)$ to the existence of a
rational point on $X$,
see also \ref{subsec:cohomological-obstruction} below.
The results of the present paper enable us to show that this obstruction class
$\eta(G,X)$ coincides, up to sign, with the elementary obstruction.

\begin{introtheorem}[Theorem~{\ref{thm:eta(G,X)}}]\label{introth:compare-e-eta}
   Let $X$ be a homogeneous space under a connected (linear) $k$-group $G$ with
 $\Pic(\Gbar) = 0$.
Let $\Hbar$ be the  stabilizer of a geometric point  $\xb \in X(\kbar)$
(we do not assume that $\Hbar$ is connected).
Then $e(X)\in\Ext^1(\UPic(\Xbar),\kbar^\times)$ coincides with
$-\eta(G,X) \in  H^1\left(k, \langle H\mm \to G\tor]\right)$ under the canonical
identification
$$
 \Ext^1(\UPic(\Xbar), \kbar^\times) \iso
    H^1(k, \langle H\mm \to G\tor]).
$$
\end{introtheorem}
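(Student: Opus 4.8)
The plan is to reduce the comparison of two \emph{a priori} differently defined classes — the elementary obstruction $e(X)\in\Ext^1(\UPic(\Xbar),\kbar^\times)$ and the cocycle-defined class $\eta(G,X)\in H^1(k,\langle H\mm\to G\tor])$ — to an identity of explicit $1$-cocycles, using the canonical identification $\Ext^1(\UPic(\Xbar),\kbar^\times)\iso H^1(k,\langle H\mm\to G\tor])$ of Corollary~\ref{cor:comp-ext1-upic-gm}. First I would recall precisely how each side is constructed. On the one hand, $e(X)$ is obtained from the extension class attached to the complex $\UPic(\Xbar)$: by Main Theorem~\ref{introth:upic-hom-space} we have $\UPic(\Xbar)\iso[\XX(\Gbar)\to\XX(\Hbar)\rangle$, and $\Ext^1$ of this complex against $\kbar^\times$ is computed by dualizing, giving the complex $\langle H\mm(\kbar)\to G\tor(\kbar)]$ and hence $H^1(k,-)$ of it. On the other hand, $\eta(G,X)$ is defined in \cite{Borovoi:hasse-homogeneous} by choosing a $\kbar$-point $\xb$, looking at the Galois action $\sigma\mapsto\sigma\xb$, writing $\sigma\xb = g_\sigma\cdot\xb$ for some $g_\sigma\in G(\kbar)$, and measuring the failure of $\sigma\mapsto g_\sigma$ to be a cocyle valued in $G/H$; pushing this down to the abelianizations/groups of multiplicative type produces a $1$-cocycle with values in $\langle H\mm(\kbar)\to G\tor(\kbar)]$.

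The key steps, in order. (1) Make explicit a representative for the isomorphism $\UPic(\Xbar)\iso[\XX(\Gbar)\to\XX(\Hbar)\rangle$ at the level of complexes, tracking where the distinguished rational function / divisor data on $\Xbar$ goes — this is exactly the content of the proof of Main Theorem~\ref{introth:upic-hom-space} via the extended equivariant Picard complex, so I would extract from that proof the concrete quasi-isomorphisms rather than just the abstract derived-category isomorphism. (2) Recall the explicit description of the elementary obstruction of Colliot-Th\'el\`ene--Sansuc as the class of the extension $1\to\kbar^\times\to\sK(\Xbar)^\times\to\sK(\Xbar)^\times/\kbar^\times\to 1$ (or the corresponding $2$-extension/complex-level class), as packaged in \cite{BvH07}, and transport it through the identification of step~(1) to land in $H^1(k,\langle H\mm\to G\tor])$. (3) Independently, unwind Borovoi's cocycle for $\eta(G,X)$: the choice of $\xb$ and the elements $g_\sigma$ with $\sigma\xb=g_\sigma\xb$ give a map $\sigma\mapsto \bar g_\sigma\in G\tor(\kbar)$ together with a ``coboundary correction'' in $H\mm(\kbar)$ recording that $g_\sigma$ is only well-defined modulo $H$. (4) Compare the two resulting cocycles term by term; they should agree up to the sign coming from the standard shift conventions (degree $0$ vs degree $1$, or from the direction of the dualization $\UPic\mapsto\UPic^D$), which accounts for the $-\eta(G,X)$ in the statement. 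Finally, invoke functoriality in $G$ and $X$ (also part of Main Theorem~\ref{introth:upic-hom-space}) to see the identification is the canonical one and not just an abstract isomorphism.

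The main obstacle I expect is step~(1) combined with step~(4): namely, pinning down all the sign and shift conventions so that the cocycle extracted from the derived-category machinery (which involves a choice of injective or flat resolution, the dualization functor, and the passage $\XX(\Hbar)=\Pic_G(\Xbar)$ via Popov's identification) matches, on the nose, the very hands-on cocycle $\eta(G,X)$ built from Galois descent of the point $\xb$. In particular one must be careful that the Galois module structure on $\XX(\Hbar)$ used on the $\UPic$ side (coming from $\Pic_G(\Xbar)$) is literally the same as the one used in \cite{Borovoi:hasse-homogeneous} when defining $\eta$. A clean way to finesse this is to reduce to the case $X(k)\neq\emptyset$ by a twisting/base-change argument: both $e(X)$ and $\eta(G,X)$ vanish when there is a rational point and both are functorial, so one can try to compare them after a field extension splitting $X$, or compare their images under a torsor construction; but since $e(X)$ need not be detected by a single rational point of a twist, the safer route is the direct cocycle computation. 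I would therefore budget most of the effort for carefully choosing compatible representatives in steps~(1)--(3) so that step~(4) becomes a short verification yielding the asserted sign.
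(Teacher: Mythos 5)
Your plan is a genuinely different route from the paper's, and in its present form it leaves the decisive step unexecuted. The paper does \emph{not} compare explicit cocycles for a general homogeneous space. Instead it argues by d\'evissage in three stages: (a) when $G$ is a torus, $X$ is a torsor under $T=G/H$ and the identity $\beta_{T,X}(e(X))=-\eta(T,X)$ is quoted from \cite[Thm.~5.5]{BvH07}, where the explicit cocycle comparison was already carried out for torsors under tori; (b) when $H\mm$ embeds into $G\tor$, one passes to the quotient $Y=X/G\ssu$, which is a homogeneous space under the torus $G\tor$ with geometric stabilizer $\Hbar\mult$, and the induced map on $H^1(k,\langle H\mm\to G\tor])$ is the identity, so functoriality of both $e$ and $\eta$ transports the torus case back to $(G,X)$; (c) in general, Proposition~\ref{prop:main-trick-dominating} produces a morphism of pairs $(F,Z)\to(G,X)$ with $F=G\times P$ for a quasi-trivial torus $P$ chosen (via \cite[Lemma 3.7]{BCTS07}) so that $(F,Z)$ satisfies the embedding condition of (b), and functoriality again concludes. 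The only place where cocycles are ever matched against the derived-category machinery is the torus-torsor case, which is exactly the situation where the zig-zag of quasi-isomorphisms is simple enough to unwind.

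By contrast, your step (4) — "compare the two resulting cocycles term by term; they should agree up to the sign" — is not a short verification in the general case: the isomorphism $\UPic(\Xbar)\iso[\XX(\Gbar)\to\XX(\Hbar)\rangle$ of Theorem~\ref{th:upic-hom-space} is a zig-zag through $\UPic_G(\Xbar)$ and the cone constructions, the identification $\Ext^1(\UPic(\Xbar),\kbar^\times)\iso H^1(k,\langle H\mm\to G\tor])$ of Proposition~\ref{cor:comp-ext1-upic-gm} passes through a torsion-free resolution whose existence is only asserted abstractly (Lemma~\ref{lem:complexes}), and $X$ has no $k$-point while $\Hbar$ is only defined over $\kbar$. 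Transporting the extension class $e(X)$ through all of this to an explicit hypercocycle comparable with $(\hat u,\hat g)$ is precisely the difficulty the theorem is about, and asserting that it "should agree up to the sign" is assuming the conclusion. You correctly note that twisting/base-change cannot substitute for the computation, but you then fall back on the direct computation without supplying it. To repair the argument along the paper's lines you would need either to actually carry out the general cocycle comparison (a substantial computation not present in your proposal), or to supply the reduction steps: the quotient $Y=X/G\ssu$, the auxiliary pair $(F,Z)$ with $P$ quasi-trivial killing the obstruction to descending $\Zbar=\Hbar\backslash\Fbar$ to $k$, and the functoriality of both classes under these morphisms of pairs.
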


The results of the present paper were partially announced in the research announcement \cite{BvH06}.

\bigskip
\noindent\emph{Acknowledgements.}
The authors are very grateful to Cyril Demarche for carefully reading the paper and making helpful comments,
and to Joseph Bernstein and Ofer Gabber for proving Lemma \ref{lem:complexes}.
We are grateful to Pierre Deligne, Gerd Faltings, Daniel Huybrechts, and Thomas Zink for most useful discussions.
We  thank the referee for a thorough review and useful remarks.
The first-named author worked on the paper, in particular, while visiting the Max-Planck-Institut f\"ur Mathematik, Bonn;
he thanks the Institute for hospitality, support, and excellent working conditions.

\section{Preliminaries}\label{sec:preliminaries}

\begin{subsec}\label{subsec:UPic}
Let $X$ be a geometrically integral algebraic variety over a field $k$ of characteristic 0.
We denote by $\sO(X)$ the ring of regular functions on $X$,
and by $\sK(X)$ the field of rational functions on $X$.
We denote by $\OX$ and $\KX$ the corresponding multiplicative groups.
We denote by $\Div(X)$ the group of Cartier divisors on $X$,
and by $\Pic(X)$ the Picard group of $X$
(i.e. the group of isomorphism classes of invertible sheaves on $X$).
We have an exact sequence
$$
0\to \OX/k^\times\to\KX/k^\times\labelto{\divisor}\Div(X)\to\Pic(X)\to 0.
$$
We set
$$
U(X)=\OX/k^\times.
$$

We set $\Xbar=X\times_k \kbar$.
In~\cite{BvH07} we defined an object $\UPic(\Xbar)$ in the derived
category of discrete $\Gal(\kbar/k)$-modules with the property that
$$\sH^0(\UPic(\Xbar))=U(\Xbar),\ \sH^1(\UPic(\Xbar))=\Pic(\Xbar)
,\ \sH^i(\UPic(\Xbar))  = 0 \text{\ if $i \neq 0,1$.}
$$
This object is given by the complex
\begin{equation}\label{eq:upic-repr}
\KXbar/\kbar^\times \labelto{\divisor} \Div(\Xbar)
\end{equation}
with $\KXbar/\kbar^\times$ in degree 0 and $\Div(\Xbar)$ in degree 1.
(cf. \cite[Cor.~2.5, Rem.~2.6]{BvH07}).
\end{subsec}

\begin{subsec}\label{subsec:G}
Let $X$ be a geometrically integral
$k$-variety,
$G$ a connected linear algebraic $k$-group,
 and  $w\colon X\times_k G\to X$ a right action of $G$ on $X$:
$(x,g)\mapsto xg$, where $x\in X,\ g\in G$.
For $g \in G(\kbar)$ we write $w_g\colon \Xbar \to \Xbar$ for
the map $x \mapsto xg$.
We denote by $\XX(G)$ the character group of $G$,
i.e. $\XX(G)=\Hom_k(G,\Gm)$.

We denote by $\Pic_G(X)$ the group of isomorphism classes of
$G$-equivariant invertible sheaves $(\sL,\beta)$ on $X$,
where $\sL$ is an invertible
sheaf on $X$ and $\beta$ is a \emph{$G$-linearization} of $\sL$,
see  \cite[Ch.~1, \S3, Def.~1.6]{Mumford}   or Definition \ref{def:lin-beta}   below.
We say that two $G$-linearizations $\beta$ and $\beta'$ of an
invertible sheaf $\sL$ are \emph{equivalent} if $(\sL, \beta)$ and
$(\sL,\beta')$ are $G$-equivariantly isomorphic.
The group structure on $\Pic_G(X)$ is given by the tensor product.
We have a canonical homomorphism of abelian groups
$$
\Pic_G(X)\to\Pic(X)
$$
taking the class $[\sL,\beta]$ of $(\sL,\beta)$ to the class $[\sL]$ of $\sL$.

Note that the set of $G$-linearizations of an invertible sheaf  may be empty.
In other words, the homomorphism $\Pic_G(X)\to\Pic(X)$ need not be surjective.
The set of equivalence classes
of $G$-linearizations of an invertible sheaf is either empty or a coset in
$\Pic_G(X)$ of the subgroup of equivalence classes of
$G$-linearizations of the trivial invertible sheaf $\sO_X$
(i.e. a coset of the kernel of the map $\Pic_G(X)\to\Pic(X)$).

We define
\begin{align*}
\sO_G(X) & = \sO(X)^G, \\
U_G(X)& =(\sO(X)^G)^\times/k^\times,
\end{align*}
where $\sO(X)^G$ denotes the ring of $G$-invariant regular functions
on $X$.
\end{subsec}

\begin{subsec}
  {\it Cones and fibres.}
Let $f\fcolon P \to Q$ be
  a morphism of complexes of objects of an abelian category $\Acat$.
  We denote by $\langle P \to Q ]$ the \emph{cone} of $f$,
and by $[ P \to Q \rangle$ the {\em fibre} (or co-cone) of $f$, see \cite[\S\,1.1]{BvH07}.
Note that $[P\to Q\rangle = \langle P\to Q][-1]$.
Note also that if $f\colon P\to Q$ is a morphism of \emph{objects} of our abelian category,
then $[ P \to Q \rangle$ is the complex $P\labelto{f}Q$ with $P$ in degree 0,
and $\langle P \to Q ]$ is the complex $P\labelto{-f}Q$ with $Q$ in degree 0.
\end{subsec}

\section{The extended  equivariant Picard complex}
  \label{sec:extend-equiv-picard}

\begin{subsec}\label{subsec:calg-defs}
  Let $X$ and $G$ be as in \S\ref{subsec:G}.  For $n \geq 0$
  we write $\Calg^n(\Gbar, \OXbar)$ for the Galois module $\sO(\Xbar\times\Gbar^n)^\times$
of invertible regular functions on $\Xbar\times \Gbar^n$.
   For $f \in \Calg^n(\Gbar, \OXbar)$ we write
  $f_{g_1, \cdots, g_n} = f|_{X\times{(g_1, \dots, g_n)}} \in \OXbar$
  for the restriction to the fibre over  $(g_1, \dots, g_n) \in
  G^n(\kbar)$. As the notation suggests, we regard $\Calg^n(\Gbar, \OXbar)$
  as the group of \emph{algebraic $n$-cochains of $G$ with coefficients in $\OXbar$}.
The assignment
  \begin{equation*}
    f \mapsto (\ (g_1, \dots, g_n) \mapsto f_{g_1,\cdots, g_n}\ )
  \end{equation*}
  defines a canonical injection of $\Calg^n(\Gbar, \OXbar)$ into the group
  $C^n(G(\kbar), \OXbar)$ of ordinary cochains of the abstract group $G(\kbar)$
  with coefficients in $\OXbar$.
The usual construction gives rise to a cochain complex
 \begin{equation*}
   \Calg^0(\Gbar, \OXbar) \labelto{d^0} \Calg^1(\Gbar, \OXbar) \labelto{d^1}
   \Calg^2(\Gbar, \OXbar) \labelto{d^2} \cdots
 \end{equation*}
 (which we may identify with a subcomplex of the standard cochain
 complex \newline $C^n(G(\kbar), \OXbar)$).  We denote the corresponding
 cocycles by $\Zalg^*(\Gbar, \OXbar)$, the coboundaries by
 $\Balg^*(\Gbar, \OXbar)$ and the cohomology by $\Halg^*(\Gbar, \OXbar)$.
 In fact, we shall mainly be interested in the first two degrees of the complex.
 Here $d^0 \colon \OXbar \to \Calg^1(\Gbar, \OXbar)$ is
 given by
$$
f \mapsto  (f \circ w)/ (f \circ p_X),
\text{ i.e. } f\mapsto (\ (x,g)\mapsto f(xg)/f(x)\ ),
$$
where $w\colon X\times_k G\to X$ is the action of $G$ on $X$,
and $p_X\colon X\times_k G\to X$ is the first projection.
Note that $\Zalg^1(\Gbar,\OXbar) \subset \Calg^1(\Gbar, \OXbar)$
consists of those invertible regular functions
 $c$ on $X\times G$ for which
\begin{equation}\label{eq:cocycle}
c_{g_1 g_2}(x)=c_{g_1}(x)\cdot c_{g_2}(xg_1).
\end{equation}

We also define the complexes $\Calg^*(\Gbar, \KXbar)$ and
$\Calg^*(\Gbar, \Div(\Xbar))$
by taking $\Calg^n(\Gbar, \KXbar)$
(resp.~$\Calg^*(\Gbar, \Div(\Xbar)$) to be the group of invertible
rational functions (resp.~divisors) on $X\times G^n$.
The differentials of the complexes are defined as above.
\end{subsec}

\begin{subsec}\label{subsec:KDiv}
We write $\KDiv(\Xbar)$ for the $2$-term complex
$\left[\sK(\Xbar)^\times\labelto{\divisor} \Div(\Xbar)\right\rangle$ (as in \cite{BvH07}).
Recall that $\sK(\Xbar)$ is in degree 0 and $\Div(\Xbar)$ is in degree 1.
 We define
$\Calg^*(\Gbar, \KDiv(\Xbar))$ to be the total complex associated to
the double complex
\[
\xymatrix{
\cdots                                           & \cdots               \\
{\Calg^1}(\Gbar,\KXbar)\ar[r]^{\divisor^1}\ar[u]_{d^1_\sK}
                          & \Calg^1(\Gbar,\Div(\Xbar))\ar[u]_{d^1_{\Div}}  \\
{\Calg^0}(\Gbar,\KXbar)\ar[r]^{\divisor^0}\ar[u]_{d^0_\sK}
                          & \Calg^0(\Gbar,\Div(\Xbar))\ar[u]_{d^0_{\Div}}.
}
\]
In other words, $\Calg^*(\Gbar, \KDiv(\Xbar))$ is the total complex associated to
the double complex
$$
\xymatrix{
\cdots                                           & \cdots               \\
\sK(\Xbar\times\Gbar)^\times   \ar[r]^{\divisor^1}\ar[u]_{d^1_\sK}
                          & \Div(\Xbar\times\Gbar)   \ar[u]_{d^1_{\Div}}  \\
\KXbar                         \ar[r]^{\divisor^0}\ar[u]_{d^0_\sK}
                          & \Div(\Xbar)\ar[u]_{d^0_{\Div}} .
}
$$
Here $\KXbar$ is in bidegree $(0,0)$.
We write $\Halg^i(\Gbar, \KDiv(\Xbar))$ for \\$\sH^i(\Calg^*(\Gbar, \KDiv(\Xbar)))$.
\end{subsec}

\begin{definition}\label{def:UPicG}
Let $X$ and $G$ be as in \S\ref{subsec:G}.
We define  the \emph{extended equivariant Picard complex of  $G$ and $X$} to be the complex
\begin{equation*}
  \UPic_G(\Xbar) = \tau_{\leq 1} \Calg^*(\Gbar, \KDiv(\Xbar))/\kbar^\times.
\end{equation*}
\end{definition}

In other words, $\UPic_G(\Xbar)$ is the complex
\begin{equation*}
  \KXbar/\kbar^\times
    \labelto{\begin{pmatrix}
                 d^0_\sK\rule[-2ex]{0pt}{2ex} \\ \divisor^0
             \end{pmatrix}}
  \left\{ (z, D) \in \Zalg^1(\Gbar, \KXbar) \oplus \Div(\Xbar) \colon
          \divisor^1(z) = d^0_{\Div}(D) \right\},
\end{equation*}
where $d^0_\sK([f]) = (f \circ w)/(f \circ p_X)$ and
$d^0_{\Div}(D) = w^*D-p_X^*D$.
Clearly we have
$$
\sH^0(\UPic_G(\Xbar))=\Halg^0(\Gbar, \KDiv(\Xbar))/\kbar^\times,
\quad \sH^1(\UPic_G(\Xbar))=\Halg^1(\Gbar, \KDiv(\Xbar)).
$$
We have
$$
\Halg^0(\Gbar, \KDiv(\Xbar))
  = \{ f \in \KXbar \colon \divisor(f) = 0 \text{ and } f \circ w =
  f \circ p_X \}=(\sO(\Xbar)^\times)^G,
$$
whence
$$
\sH^0(\UPic_G(\Xbar))=(\sO(\Xbar)^\times)^G/\kbar^\times=U_G(\Xbar).
$$
It turns out that
$$
 \sH^1(\UPic_G(\Xbar))=\Pic_G(\Xbar),
$$
see  Corollary~\ref{cor:coh-upicg} below.

Note that by slight abuse of notation we  write
$\UPic_G(\Xbar)$, $U_G(\Xbar)$, $\Pic_G(\Xbar)$ rather than
$\UPic_{\Gbar}(\Xbar)$, $U_{\Gbar}(\Xbar)$, $\Pic_{\Gbar}(\Xbar)$.

\begin{subsec}\label{subsec:upicg-forgetful-map}
The complex $\UPic_G(\Xbar)$ can be regarded  as an equivariant version
of the extended Picard complex
$\UPic(\Xbar)=\left[\KXbar/\kbar^\times
\labelto{\divisor}\Div(\Xbar)\right\rangle$
(indeed, if $G=1$, then $\Zalg^1(\Gbar, \KXbar)=1$ and $\UPic_G(\Xbar)=\UPic(\Xbar)$).
We have an obvious natural morphism of complexes of Galois modules
\begin{equation*}
\nu\colon\UPic_G(\Xbar)\to\UPic(\Xbar),
\end{equation*}
where $\nu^0=\id$ and
$$
\nu^1(z,D)=D
\text{ for }(z,D)\in\UPic_G(\Xbar)^1\subset \Zalg^1(\Gbar,\sK(\Xbar)^\times)\oplus\Div(\Xbar).
$$
\end{subsec}

\begin{subsec}\label{subsec:functoriality}
{\em Functoriality.}

Let $X$ and $G$ be as in \S\ref{subsec:G}.  It is clear that a
homomorphism $G' \to G$ of linear algebraic groups over $k$ induces a
pull-back homomorphism
\begin{equation*}
 \Calg^*(\Gbar, \hvar) \to \Calg^*(\Gbar', \hvar)
\end{equation*}
for any of the coefficients considered,
and also a pull-back homomorphism
\begin{equation*}
  \UPic_G(\Xbar) \to \UPic_{G'}(\Xbar).
\end{equation*}

Functoriality in $X$ is a bit more subtle.
For a \emph{dominant} $G$-equivariant morphism
\begin{equation*}
  f \colon X' \to X,
\end{equation*}
from another $G$-variety $X'$ as in Section~\ref{subsec:G} to $X$, we
clearly have a pull-back morphism of complexes
$\UPic(\Xbar) \to \UPic(\Xbar')$
and a pull-back morphism of complexes $\UPic_G(\Xbar) \to \UPic_G(\Xbar')$.

However, for a $G$-equivariant morphism $f$ as above that is not
dominant, we need to modify our complexes.
We assume that both $X$ and $X'$ are smooth.
We choose an arbitrary $G$-invariant scheme-theoretic point $x' \in X'$,
which need not be closed.
In more geometric terms this amounts to taking the
generic point of an irreducible, but not necessarily geometrically
irreducible $G$-orbit on $X'$.  Taking $x = f(x')$, we then consider
the subcomplex
$\OxDiv(\Xbar) \subset \KDiv(\Xbar)$
given by
\begin{equation*}
  (\sO_{X,x}\tensor_k \kbar)^\times  \labelto{\divisor} \Div(\Xbar)_x\, ,
\end{equation*}
where $\sO_{X,x} \subset \sK(X)$ is the local ring at $x$ and $\Div(\Xbar)_x$
consists of the divisors whose support does not contain $x$.
Using a moving lemma for divisors on a smooth variety
(cf. \cite{Shafarevich}, Vol.~1, III.1.3, Thm.~1 and the remark after the proof),
we see that the
inclusion $\OxDiv(\Xbar) \into \KDiv(\Xbar)$ is a quasi-isomorphism, and so is
the induced inclusion $\OxDiv(\Xbar)/\kbar^\times \into \UPic(\Xbar)$. We shall
denote $\OxDiv(\Xbar)/\kbar^\times$ by $\UPic(\Xbar)_x$. Similarly, we define
\begin{equation*}
  \UPic_G(\Xbar)_x = \tau_{\le 1} \Calg^*(\Gbar, \OxDiv(\Xbar))/\kbar^\times
\end{equation*}
and we see that the canonical inclusion $\UPic_G(\Xbar)_x \into
\UPic_G(\Xbar)$ is a quasi-isomorphism.

By construction, a $G$-equivariant morphism $f$ as above now induces
pull-back homomorphisms of complexes
\begin{align*}
  f^*_x \colon \UPic(\Xbar)_x & \to \UPic(\Xbar')_{x'} \\
\intertext{and}
  f^*_x \colon \UPic_G(\Xbar)_x & \to \UPic_G(\Xbar')_{x'},
\end{align*}
hence morphisms in the derived category
\begin{align}
  f^* \colon \UPic(\Xbar) & \to \UPic(\Xbar') \notag \\
\intertext{and}
  f^* \colon \UPic_G(\Xbar) & \to \UPic_G(\Xbar'). \label{eq:f*G}
\end{align}
For $\UPic(\hvar)$ this morphism coincides with the morphism we obtain
from the derived functor construction of $\UPic$ in \cite[2.1]{BvH07}, so it
is independent of the choice of $x' \in X'$.  In the case of $\UPic_G$
we shall have to verify this directly by an auxiliary construction.

For this, we generalize the above construction from a point $x \in X$
to a finite set $S = \{x_1, \dots, x_n\}$ of $G$-equivariant points in
$X$ by replacing $\sO_{X,x} \otimes \kbar$ by
\[ \sO_{X,S}\otimes \kbar =
\bigcap_{x \in S} \sO_{X, x} \otimes \kbar \subset \sK(\Xbar) \]
and replacing $\Div(\Xbar)_x$ by
\[ \Div(\Xbar)_S = \cap_{x \in S} \Div(\Xbar)_x \subset \Div(\Xbar), \]
etc.  We then see from the diagram below, in which every injection of
complexes is a quasi-isomorphism, that the morphism~(\ref{eq:f*G}) in
the derived category does not depend on the choice of $x' \in X'$:
\begin{equation*}
\xymatrix@!R=2ex@C=0pt@M=1.1ex{
 & \UPic_G(\Xbar)_{x_1} \ar[rr]^{f^*_{x_1}} && \UPic_G(\Xbar')_{x'_1} \\
\UPic_G(\Xbar)_{\{x_1, x_2\}} \ar[rr]^{f^*_S} \ar@{^{(}->}[dr]\ar@{^{(}->}[ur]
&& \UPic_G(\Xbar')_{\{x'_1, x'_2\}} \ar@{^{(}->}[dr]\ar@{^{(}->}[ur]  \\
 & \UPic_G(\Xbar)_{x_2} \ar[rr]^{f^*_{x_2}} && \UPic_G(\Xbar')_{x'_2}. \\
}
\end{equation*}
We shall call any modification where we replace $\KXbar$ by
$(\sO_{X,x}\otimes_k\kbar)^\times$ and $\Div(\Xbar)$ by $\Div(\Xbar)_x$, etc.  a
\emph{local modification}.
\end{subsec}


\section{ $G$-linearizations of invertible sheaves}\label{sec:Glin}

\begin{subsec}
Let $G$ be an algebraic group (not necessarily linear)
over an algebraically closed field $k$ of characteristic 0.
Let $X$ be a $k$-variety with a right action $w$ of $G$.
This means that we are given a morphism of varieties
$$
w\colon X\times G\to X,\quad (x,g)\mapsto w_g(x)=xg,
$$
satisfying the usual conditions.
\end{subsec}

\begin{definition}[{\cite[Ch.~1, \S3, Def.~1.6]{Mumford}}]
\label{def:lin-beta}
Let $\sL$ be an invertible sheaf on a $G$-variety $X$.
A $G$-linearization of $\sL$ is an isomorphism
$$
\beta \colon w^*\sL\to p^*_X \sL
$$
of invertible sheaves on $X\times G$
satisfying the following cocycle condition:

 Let $m\colon G\times G\to G$ be the group law.
Consider the projection $p_X\colon X\times G\to X$.
We have morphisms $w$ and $p_X$ from $X\times G$ to $X$.
Consider the projection
$p_{X,G1}\colon X\times G\times G\to X\times G$
taking $(x,g_1,g_2)$ to $(x,g_1)$.
We have morphisms $p_{X,G1},\ 1_X\times m$, and $w\times 1_G$
from $X\times G\times G$ to $X\times G$.
The cocycle condition is the commutativity of the following diagram:
\begin{equation*}
\xymatrix{
[w\circ(w\times 1_G)]^* \sL \ar@{=}[dd] \ar[rr]^{(w\times 1_G)^*\beta} &
        &[p_X\circ(w\times 1_G)]^*\sL\ar@{=}[d]\\
& &[w\circ p_{X,G1}]^*\sL\ar[r]^{(p_{X,G1})^*\beta}        &[p_X\circ p_{X,G_1}]^*\sL\ar@{=}[d] \\
[w\circ(1_X\times m)]^*\sL\ar[rrr]^{(1_X\times m)^*\beta} & & &[p_X\circ(1_X\times m)]^*\sL .
}
\end{equation*}
This is the same as to say
that for each $g_1,g_2\in G(k)$ we have the cocycle condition
\begin{equation}\label{eq:cocycle-beta}
\beta_{g_1 g_2}=\beta_{g_1}\circ w^*_{g_1}(\beta_{g_2}),
\end{equation}
where for $g\in G(k)$ we write $\beta_g$
for the inverse image of $\beta$ under the map $X\to X\times G,\ x\mapsto (x,g)$.
\end{definition}

\begin{definition}
 Let $G$ be a $k$-group and $X$ be a $G$-variety over $k$.
By an invertible $G$-sheaf we mean a pair $(\sL,\beta)$,
where $\sL$ is an invertible sheaf and $\beta$
is a $G$-linearization of $\sL$.
We denote by $\Pic_G(X)$ the group of isomorphism classes
of invertible $G$-sheaves on a $G$-variety $X$.
We denote by $[\sL,\beta]$ the class of the pair $(\sL,\beta)$ in $\Pic_G(X)$.
\end{definition}

We wish to compute this group $\Pic_G(X)$ in terms
of divisors and rational functions (see Theorem \ref{thm:PicG-H1}   below).

\begin{subsec}
From now on we assume that $G$ is a \emph{connected linear} $k$-group
and $X$ is an \emph{integral} $G$-variety.
We denote by $\sO_X$ the structure sheaf on $X$, and by $\sK_X$
the sheaf of total quotient rings of $\sO_X$.
Then $\sO(X)=\Gamma(X,\sO_X)$ and $\sK(X)=\Gamma(X,\sK_X)$.

By an invertible $\sK_X$-sheaf $\sR$ on $X$ we mean a locally free sheaf of modules
of rank one over the sheaf of rings $\sK_X$.
Note that if an invertible  $\sK_X$-sheaf $\sR$ has a non-zero global section $s$, then
$\sR$ is isomorphic to $\sK_X$ as a $\sK_X$-module.

Let $\sL$ be an invertible sheaf on $X$.
We set $\sL^\sK=\sL\otimes_{\sO_X} \sK_X$.
Note that $\sL^\sK$ has a non-zero global section, because $\sL$ has a non-zero rational section.
If $\psi\colon \sL_1\to\sL_2$ is a morphism of invertible sheaves on $X$,
then we have an induced morphism
$$
\psi^\sK\colon \sL_1^\sK\to\sL_2^\sK .
$$

Let $f\colon X\to Y$ be a \emph{dominant} morphism of integral $k$-varieties.
Then we have a morphism of ringed spaces
$$
(X,\sK_X)\to (Y,\sK_Y).
$$
If $\sR_Y$ is a sheaf of $\sK_Y$-modules on $Y$,
then we define
\begin{equation*}
f^*\sR_Y=f^{-1}\sR_Y\otimes_{f^{-1}\sK_Y}\sK_X,
\end{equation*}
cf. \cite[Ch.~II, Section 5, p.~110]{Hartshorne}.
If $\sL_Y$ is a sheaf of $\sO_Y$-modules on $Y$, then
\begin{equation*}
f^*(\sL_Y\otimes_{\sO_Y}\sK_Y)=(f^*\sL_Y)\otimes_{\sO_X}\sK_X.
\end{equation*}
\end{subsec}

\begin{definition}\label{def:lin-gamma}
A $G$-linearization of an invertible $\sK_X$-sheaf $\sR$
on a $G$-variety $X$ is an isomorphism of invertible $\sK_{X\times G}$-sheaves on $X\times G$
$$
\gamma\colon w^*\sR\to p_X^*\sR
$$
such that the following diagram commutes:
\begin{equation*}
\xymatrix{
[w\circ(w\times 1_G)]^* \sR \ar@{=}[dd] \ar[rr]^{(w\times 1_G)^*\gamma} &
        &[p_X\circ(w\times 1_G)]^*\sR\ar@{=}[d]\\
& &[w\circ p_{X,G1}]^*\sR\ar[r]^{(p_{X,G1})^*\gamma}        &[p_X\circ p_{X,G_1}]^*\sR\ar@{=}[d] \\
[w\circ(1_X\times m)]^*\sR\ar[rrr]^{(1_X\times m)^*\gamma} & & &[p_X\circ(1_X\times m)]^*\sR .
}
\end{equation*}
\end{definition}

Note that the diagram of Definition \ref{def:lin-gamma}  is just the diagram of Definition \ref{def:lin-beta}
with $\sR$ instead of $\sL$ and with $\gamma$ instead of $\beta$.

\begin{lemma}\label{lem:G-lin-L-R}
Let $\sL$ be an invertible sheaf on $X$, and
 let $\beta\colon w^*\sL\to p_X^*\sL$ be an isomorphism.
Then $\beta$ is a $G$-linearization of $\sL$ if and only if
$$
\beta^\sK\colon w^*\sL^\sK\to p^*_X\sL^\sK
$$
is a $G$-linearization of $\sL^\sK$.
\end{lemma}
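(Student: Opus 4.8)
The statement is a purely functorial, "formal" equivalence: $\beta$ is a $G$-linearization of $\sL$ iff its generic-point extension $\beta^\sK$ is a $G$-linearization of $\sL^\sK$. The plan is to reduce everything to the commutativity of the hexagon (cocycle) diagram, and to exploit the fact that $(-)^\sK = (-)\otimes_{\sO}\sK$ is a functor that behaves well under the pullbacks $w^*$, $p_X^*$, $(w\times 1_G)^*$, etc., that appear in Definitions~\ref{def:lin-beta} and~\ref{def:lin-gamma}. Concretely, I would proceed in three steps.

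First, I would record the compatibility of $(-)^\sK$ with pullback along the dominant morphisms occurring in the cocycle condition. The maps $w$, $p_X$, $w\times 1_G$, $p_{X,G1}$, $1_X\times m$ are all dominant morphisms of integral $k$-varieties (here one uses that $G$ is connected, so $X\times G$ and $X\times G\times G$ are integral, and that $w$ and the projections are dominant), so the formula $f^*(\sL_Y\otimes_{\sO_Y}\sK_Y)=(f^*\sL_Y)\otimes_{\sO_X}\sK_X$ from the paragraph before Definition~\ref{def:lin-gamma} applies. Thus $(w^*\sL)^\sK = w^*(\sL^\sK)$ canonically, and likewise for every other pullback in the hexagon; moreover for a morphism $\psi$ of invertible sheaves, $(f^*\psi)^\sK = f^*(\psi^\sK)$ under these identifications. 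In particular $(\beta^\sK)_{g} = (\beta_g)^\sK$ for each $g\in G(k)$, matching the two cocycle conditions~\eqref{eq:cocycle-beta} for $\beta$ and for $\beta^\sK$ term by term.

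Second, for the forward implication I would simply apply the functor $(-)^\sK$ to the commutative hexagon defining the $G$-linearization $\beta$: functoriality turns it into the corresponding hexagon for $\beta^\sK$, which is exactly the diagram of Definition~\ref{def:lin-gamma}. Hence $\beta^\sK$ is a $G$-linearization of $\sL^\sK$. For the converse, the key point is \emph{injectivity} of $(-)^\sK$ on morphisms of invertible sheaves over an integral variety: if $\varphi_1,\varphi_2\colon\sL_1\to\sL_2$ satisfy $\varphi_1^\sK=\varphi_2^\sK$, then $\varphi_1=\varphi_2$, because $\sL_1\to\sL_1^\sK=\sL_1\otimes_{\sO_X}\sK_X$ is injective on sections (an invertible sheaf on an integral variety is torsion-free, $\sO_X\hookrightarrow\sK_X$). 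Both legs of the hexagon for $\beta$ are morphisms between the same two invertible sheaves on $X\times G\times G$; applying $(-)^\sK$ to each leg gives the two legs of the hexagon for $\beta^\sK$, which agree by hypothesis. By the injectivity just noted (applied over the integral variety $X\times G\times G$, using again that $G$ is connected), the two legs for $\beta$ already agree, so the hexagon for $\beta$ commutes and $\beta$ is a $G$-linearization of $\sL$.

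The main obstacle is the bookkeeping in the second step: one must check that the canonical identifications $(f^*\sL)^\sK\cong f^*(\sL^\sK)$ are compatible with the "associativity" identifications $[w\circ(w\times 1_G)]^*\sL = [w\circ(1_X\times m)]^*\sL$ (the vertical equalities in the hexagon), so that applying $(-)^\sK$ really produces Definition~\ref{def:lin-gamma}'s diagram on the nose and not merely up to some unexamined isomorphism. This is routine but requires care; it follows from the functoriality of $(-)^\sK$ together with the fact that all the morphisms in sight are dominant, so that $f^*$ of a $\sK$-sheaf is unambiguous. No deeper input is needed — the lemma is essentially the observation that $(-)^\sK$ is a faithful functor on invertible sheaves over integral varieties that commutes with the relevant pullbacks.
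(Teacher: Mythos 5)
Your proposal is correct and follows essentially the same route as the paper: the forward implication by functoriality of $(-)^\sK$, and the converse by the fact that passing to $\sK$-sheaves loses no information on an integral variety. The only cosmetic difference is in the converse: where you invoke faithfulness of $(-)^\sK$ directly (via injectivity of $\sL\to\sL^\sK$), the paper writes the two legs of the hexagon for $\beta$ as differing by a unit $\psi\in\sO(X\times G\times G)^\times$ and deduces $\psi=1$ by comparing with the commuting hexagon for $\beta^\sK$ --- the same idea in different clothing.
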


\begin{proof}
It is clear that if $\beta$ is a $G$-linearization of an invertible sheaf $\sL$ on $X$,
then
$$
\beta^\sK\colon w^*\sL^\sK\to p^*_X\sL^\sK
$$
is a $G$-linearization of $\sL^\sK$.
Conversely, assume that $\gamma:=\beta^\sK$ is a $G$-linearization of $\sR:=\sL^\sK$.
We compare the isomorphisms
$$
(1_X\times m)^*\beta,\  (p_{X,G1})^*\beta\circ (w\times 1_G)^*\beta\colon\
[w\circ(1_X\times m)]^*\sL \to [p_X\circ(1_X\times m)]^*\sL
$$
from the diagram of Definition \ref{def:lin-beta}.
We may write
$$
(1_X\times m)^*\beta=\psi\cdot (p_{X,G1})^*\beta\circ (w\times 1_G)^*\beta
$$
for some $\psi\in\sO(X\times G\times G)^\times$.
We substitute $\sR=\sL^\sK$ and  $\gamma =\beta^\sK$ in the diagram of Definition \ref{def:lin-gamma},
and we obtain that
$$
(1_X\times m)^*\gamma=\psi\cdot (p_{X,G1})^*\gamma\circ (w\times 1_G)^*\gamma.
$$
But by assumption $\gamma$ is a $G$-linearization of $\sL^\sK$,
hence $\gamma$ makes commutative the diagram of Definition \ref{def:lin-gamma},
i.e.
$$
(1_X\times m)^*\gamma= (p_{X,G1})^*\gamma\circ (w\times 1_G)^*\gamma.
$$
We see that $\psi=1$, therefore
$$
(1_X\times m)^*\beta= (p_{X,G1})^*\beta\circ (w\times 1_G)^*\beta,
$$
hence $\beta$ makes commutative the diagram of Definition \ref{def:lin-beta}.
Thus $\beta$ is a $G$-linearization of $\sL$.
\end{proof}

\begin{definition}
Let $X$ be a $G$-variety.
As in \ref{subsec:calg-defs},
we define $Z^1\alg(G,\sK(X)^\times)$
to be the group of nonzero rational functions $z\in\sK(X\times G)^\times$
satisfying the cocycle condition
\begin{equation}\label{eq:cocycle-z}
(1_X\times m)^* z=(p_{X,G1})^* z\cdot (w\times 1_G)^* z.
\end{equation}
Of course, this is the same as to say that
$$
z_{g_1 g_2}(x)=z_{g_1}(x)\cdot z_{g_2}(x g_1)
$$
for all triples $(x,g_1,g_2)\in X(k)\times G(k)\times G(k)$
for which all the three values $z_{g_1 g_2}(x),\ z_{g_1}(x)$, and $z_{g_2}(x g_1)$
are different from 0 and $\infty$.
\end{definition}

\begin{lemma}\label{lem:K-linearizations}
Let $\sR$ be an invertible $\sK_X$-sheaf on a $G$-variety $X$ having a
nonzero section $s$.
Then there exists a canonical bijection
\begin{align*}
\{ G\text{-linearizations of }\sR\}   &\to    Z\alg^1(G,\sK(X)^\times)\\
\gamma\quad &\mapsto\quad z
\end{align*}
such that $\gamma(w^*s)=z\cdot p^*_X s$.
\end{lemma}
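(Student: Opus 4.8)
The plan is to trivialize $\sR$ by the section $s$ and thereby reduce $G$-linearizations to invertible rational functions on $X\times G$. First I would record that, since $\sR$ is an invertible $\sK_X$-sheaf possessing a nonzero global section, it is isomorphic to $\sK_X$ as a $\sK_X$-module, with $s$ a generator. The morphisms $w\colon X\times G\to X$ and $p_X\colon X\times G\to X$ are dominant (each is surjective), so the pullbacks $w^*\sR$ and $p_X^*\sR$ are defined as in Section~\ref{sec:Glin}, and they are free $\sK_{X\times G}$-modules of rank one with generators $w^*s$ and $p_X^*s$; these are nonzero because pullback along a dominant morphism is injective on function fields. Since $k=\kbar$, $X$ is integral and $G$ is connected hence integral, the product $X\times G$ is integral, so $\Gamma(X\times G,\sK_{X\times G})=\sK(X\times G)$. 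Consequently an isomorphism of invertible $\sK_{X\times G}$-sheaves $\gamma\colon w^*\sR\to p_X^*\sR$ is the same datum as the unit $z\in\sK(X\times G)^\times$ with $\gamma(w^*s)=z\cdot p_X^*s$, and $\gamma\mapsto z$ is a bijection from the set of all such isomorphisms onto $\sK(X\times G)^\times$. It remains to show that this bijection carries the $G$-linearizations onto $Z\alg^1(G,\sK(X)^\times)$.

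For the second step I would pull back the defining relation $\gamma(w^*s)=z\cdot p_X^*s$ along the three (again dominant) morphisms
$$
p_{X,G1},\quad 1_X\times m,\quad w\times 1_G\colon X\times G\times G\to X\times G
$$
appearing in the diagram of Definition \ref{def:lin-gamma}. The three ``$=$'' arrows of that diagram come from the morphism identities $w\circ(w\times 1_G)=w\circ(1_X\times m)$, $\ p_X\circ(w\times 1_G)=w\circ p_{X,G1}$, and $p_X\circ p_{X,G1}=p_X\circ(1_X\times m)$, and since the sheaves in question are the pullbacks of $\sR$ along these (equal) morphisms, their respective generators, obtained by pulling back $s$, are identified compatibly. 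A direct chase of the diagram, evaluating each of the two composites on the common generator of $[w\circ(w\times 1_G)]^*\sR$, shows that the top composite $(p_{X,G1})^*\gamma\circ(w\times 1_G)^*\gamma$ multiplies it by $(p_{X,G1})^*z\cdot(w\times 1_G)^*z$, whereas the bottom map $(1_X\times m)^*\gamma$ multiplies it by $(1_X\times m)^*z$. Because an isomorphism of invertible $\sK_{X\times G\times G}$-sheaves is determined by its value on a single nonzero section, the diagram commutes precisely when
$$
(1_X\times m)^* z=(p_{X,G1})^* z\cdot (w\times 1_G)^* z,
$$
which is the cocycle condition \eqref{eq:cocycle-z}; restricted to a geometric fibre over $(g_1,g_2)$ this is the pointwise identity $z_{g_1g_2}(x)=z_{g_1}(x)\cdot z_{g_2}(xg_1)$. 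Hence $\gamma$ is a $G$-linearization of $\sR$ if and only if $z\in Z\alg^1(G,\sK(X)^\times)$, so the bijection of the first step restricts to the asserted bijection $\gamma\mapsto z$. Canonicity is clear since the construction uses only $s$ and the structure morphisms $w$, $p_X$, $m$.

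I expect the diagram chase of the second step to be the only real obstacle: one must keep careful track of which pullback of $s$ generates which of the several copies of $\sR$ over $X\times G\times G$, and verify that the canonical identifications realizing the ``$=$'' arrows of Definition \ref{def:lin-gamma} are compatible with these chosen generators, so that the commutativity of the diagram translates faithfully into the multiplicative cocycle relation for $z$. The trivialization and bijectivity statements of the first step, and the passage to the fibrewise formula, are formal.
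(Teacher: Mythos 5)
Your proposal is correct and follows essentially the same route as the paper's proof: trivialize $\sR$ via the isomorphism $\sK_X\isoto\sR$ sending $1$ to $s$, identify isomorphisms $w^*\sR\to p_X^*\sR$ with units $z\in\sK(X\times G)^\times$, and check that commutativity of the diagram of Definition~\ref{def:lin-gamma} translates into the cocycle relation \eqref{eq:cocycle-z}. The paper leaves the diagram chase and the bijectivity claim as routine, whereas you spell them out; no substantive difference.
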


\begin{proof}
Let $\alpha\colon \sK_X\to\sR$ be the isomorphism of sheaves of $\sK_X$-modules
such that $\alpha(1)=s$, where $1\in\Gamma(X,\sK_X)=\sK(X)$ is the unit element.
The isomorphism
$$
\gamma\colon w^*\sR\to p^*_X\sR
$$
gives, via $\alpha$, an automorphism
$$
\gamma'\colon\sK_{X\times G}\to\sK_{X\times G}
$$
and this automorphism $\gamma'$ is given by multiplication by
a rational function $z\in\sK(X\times G)^\times$.
The cocycle condition of commutativity of the diagram of Definition \ref{def:lin-gamma} writes then as
\begin{equation*}
(1_X\times m)^*z=(p_{X,G1})^*z\cdot (w\times 1_G)^*z.
\end{equation*}
We see that $z\in Z\alg^1(G,\sK(X)^\times)$.
Note  that $\gamma'(1)=z$, and we can write it as  $\gamma'(w^*1)=z\cdot p^*_X 1$.
Returning to our original sheaf $\sR$ and section $s$,
we obtain that
$$
\gamma(w^*s)=z\cdot p^*_X s.
$$
It is easy to see that our map $\gamma\mapsto z$ is bijective.
\end{proof}

\begin{lemma}[cf. \cite{KKV}, 2.1] \label{lem:G-linearizations}
 Consider the trivial invertible sheaf $\sL=\sO_X$ on a $G$-variety $X$.
There exists a canonical isomorphism of abelian groups
\begin{align*}
\{ G\text{-linearizations of }\sO_X\} &\to    Z\alg^1(G,\sO(X)^\times)\\
\beta\quad &\mapsto\quad c
\end{align*}
such that $\beta(1)=c$.
\end{lemma}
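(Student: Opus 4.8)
The plan is to recognize that a $G$-linearization of the trivial sheaf $\sO_X$ is precisely the datum of multiplication by a global unit on $X\times G$, and that the cocycle condition \eqref{eq:cocycle-beta} for $\beta$ is literally the cocycle condition \eqref{eq:cocycle} defining $\Zalg^1(G,\sO(X)^\times)$. First I would use that $w^*\sO_X$ and $p_X^*\sO_X$ are both canonically equal to the structure sheaf $\sO_{X\times G}$, so that an isomorphism $\beta\colon w^*\sO_X\to p_X^*\sO_X$ is an $\sO_{X\times G}$-module automorphism of $\sO_{X\times G}$, hence multiplication by the unit $c:=\beta(1)\in\sO(X\times G)^\times$, where $1$ denotes the canonical section; conversely, multiplication by any $c\in\sO(X\times G)^\times$ is such an isomorphism. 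For $g\in G(k)$ the restriction $\beta_g$ is then multiplication by $c_g=c|_{X\times\{g\}}$, which lies in $\sO(X)^\times$ because $c$ is a unit on $X\times G$. Rewriting $\beta_{g_1g_2}=\beta_{g_1}\circ w^*_{g_1}(\beta_{g_2})$ in terms of these multiplication operators gives exactly $c_{g_1g_2}(x)=c_{g_1}(x)\cdot c_{g_2}(xg_1)$, i.e.\ \eqref{eq:cocycle}; hence $\beta$ is a $G$-linearization of $\sO_X$ if and only if $c\in\Zalg^1(G,\sO(X)^\times)$, and $\beta\mapsto c$ (with inverse $c\mapsto(\text{multiplication by }c)$) is the desired bijection.

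Alternatively, I could deduce the statement from the two preceding lemmas: a $G$-linearization $\beta$ of $\sO_X$ corresponds under Lemma \ref{lem:G-lin-L-R} to a $G$-linearization $\beta^{\sK}$ of $\sO_X^{\sK}=\sK_X$, to which Lemma \ref{lem:K-linearizations}, applied with $\sR=\sK_X$ and section $s=1$, attaches a cocycle $z\in\Zalg^1(G,\sK(X)^\times)$ with $\beta^{\sK}(w^*1)=z\cdot p_X^*1$; since $\beta$ is an isomorphism of $\sO_X$-sheaves, $z$ equals the regular unit $c=\beta(1)$, so $z\in\Zalg^1(G,\sO(X)^\times)$. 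Running this backwards is legitimate because a regular unit $c$ defines an automorphism of $\sO_{X\times G}$, whose associated $\sK_X$-linearization is the one furnished by Lemma \ref{lem:K-linearizations}. Either way, to finish I would check compatibility with the group laws: if $\beta,\beta'$ correspond to $c,c'$, then the tensor-product $G$-linearization $\beta\otimes\beta'$ on $\sO_X\otimes_{\sO_X}\sO_X\cong\sO_X$ is, under this identification, multiplication by $cc'$, while the group law on $\Zalg^1(G,\sO(X)^\times)\subset\sO(X\times G)^\times$ is pointwise multiplication; being a bijection, the map $\beta\mapsto c$ is then an isomorphism of abelian groups.

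This argument is essentially formal, so I do not expect a real obstacle. The only step needing a little care is the bookkeeping that turns the commutative diagram of Definition \ref{def:lin-beta} (equivalently \eqref{eq:cocycle-beta}) into the identity \eqref{eq:cocycle} for the function $c$, together with the elementary remark that the restriction of a global unit on $X\times G$ to a fibre $X\times\{g\}$ is again a global unit on $X$; there is no deeper point, and the lemma is really a reformulation, as the reference to \cite{KKV}, 2.1 indicates.
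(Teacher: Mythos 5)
Your proposal is correct and follows essentially the same route as the paper, which simply says the proof is analogous to that of Lemma \ref{lem:K-linearizations}: trivialize $w^*\sO_X$ and $p_X^*\sO_X$ as $\sO_{X\times G}$, identify $\beta$ with multiplication by the unit $c=\beta(1)$, and translate the cocycle condition \eqref{eq:cocycle-beta} into \eqref{eq:cocycle}. Your additional verification that tensor product of linearizations corresponds to pointwise multiplication of cocycles is a worthwhile detail that the paper leaves implicit in asserting an isomorphism of abelian groups.
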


\begin{proof}
 Similar to that of Lemma \ref{lem:K-linearizations}.
\end{proof}

\begin{subsec}
Let  $\sL_1,\sL_2$ be two invertible sheaves on a variety $X$,
and let $s_i$ be a nonzero rational section of $\sL_i$ ($i=1,2$).
Let $\gamma\colon \sL_1^\sK\to\sL_2^\sK$ be an isomorphism.
Then there exists a unique rational function $f_\gamma\in\sK(X)^\times$
such that
$$
\gamma(s_1)=f_\gamma\cdot s_2.
$$
Conversely, it is clear that for any $f\in\sK(X)^\times$
there exists an isomorphism $\gamma\colon \sL_1^\sK\to\sL_2^\sK$
such that $f_\gamma=f$.
\end{subsec}

\begin{lemma}\label{lem:from regular isomorphism}
An isomorphism $\gamma\colon \sL_1^\sK\to\sL_2^\sK$ as above
comes from some  isomorphism  of invertible sheaves $\beta\colon \sL_1\to\sL_2$
if and only if
$$
\divisor (f_\gamma)=\divisor(s_1) - \divisor(s_2).
$$
\end{lemma}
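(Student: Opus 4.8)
The plan is to reduce the assertion to a purely local computation on a cover trivializing both $\sL_1$ and $\sL_2$, and then to recombine the local conditions into the stated equality of Cartier divisors. First I would choose an open cover $\{U_i\}$ of $X$ on which $\sL_1$ and $\sL_2$ are both free, say $\sL_j|_{U_i}$ has free generator $e_j^{(i)}$ for $j=1,2$. Writing $s_j=g_j^{(i)}\cdot e_j^{(i)}$ with $g_j^{(i)}\in\sK(X)^\times$, the very definition of the Cartier divisor of a rational section shows that $g_j^{(i)}$ is a local equation for $\divisor(s_j)$ on $U_i$, so that $\divisor(g_j^{(i)})$ agrees with $\divisor(s_j)|_{U_i}$ as a Cartier divisor on $U_i$. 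From $\gamma(s_1)=f_\gamma\cdot s_2$ one then computes $\gamma(e_1^{(i)})=\varphi_i\cdot e_2^{(i)}$ on $U_i$, where $\varphi_i=f_\gamma\, g_2^{(i)}/g_1^{(i)}\in\sK(X)^\times$.

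Next I would observe that $\gamma|_{U_i}$ carries $\sL_1|_{U_i}$ into $\sL_2|_{U_i}$ exactly when $\varphi_i\in\sO_X(U_i)$, and onto $\sL_2|_{U_i}$ isomorphically exactly when $\varphi_i\in\sO_X(U_i)^\times$. Since $\gamma$ is already an isomorphism of the associated $\sK_X$-sheaves and $\sL_j\inject\sL_j^\sK$ for integral $X$, the isomorphism $\gamma$ comes from an isomorphism of invertible sheaves $\beta\colon\sL_1\to\sL_2$ (necessarily with $\beta^\sK=\gamma$) if and only if $\gamma|_{U_i}$ restricts to an isomorphism $\sL_1|_{U_i}\isoto\sL_2|_{U_i}$ for every $i$; such local isomorphisms automatically glue, being restrictions of the single morphism $\gamma$. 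Hence $\gamma$ comes from such a $\beta$ if and only if $\varphi_i\in\sO_X(U_i)^\times$ for all $i$.

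Finally, $\varphi_i\in\sO_X(U_i)^\times$ is equivalent to $\divisor(\varphi_i)=0$ on $U_i$ (here one uses that $X$ is integral, so $\varphi_i$ is a single rational function, a unit on $U_i$ iff both it and its inverse are regular there). By the computation above, $\divisor(\varphi_i)=(\divisor(f_\gamma)+\divisor(s_2)-\divisor(s_1))|_{U_i}$, so the family of these local conditions over all $i$ is equivalent to the single global identity $\divisor(f_\gamma)=\divisor(s_1)-\divisor(s_2)$ of Cartier divisors on $X$. This yields the lemma in both directions.

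The only slightly delicate point is the bookkeeping of the first paragraph: identifying $g_j^{(i)}$ with a local equation of $\divisor(s_j)$ and tracking the signs correctly through the definition of $f_\gamma$. After that, the local-to-global step is routine, since a rational function on an integral variety that is a unit on each member of an open cover lies, together with its inverse, in $\sO(X)$, hence in $\sO(X)^\times$.
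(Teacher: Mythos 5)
Your proof is correct, and in the converse direction it takes a genuinely different route from the paper's. The paper argues globally: from $\divisor(f_\gamma)=\divisor(s_1)-\divisor(s_2)$ it concludes that $\divisor(s_1)-\divisor(s_2)$ is principal, hence that $\sL_1$ and $\sL_2$ have the same class in $\Pic(X)$; it then chooses an abstract isomorphism $\beta'\colon\sL_1\to\sL_2$, notes that $f_{\beta'}$ and $f_\gamma$ have equal divisors and therefore differ by a global unit $\varphi\in\sO(X)^\times$, and finally rescales to $\beta=\varphi\beta'$. You instead show directly that the given $\gamma$ carries $\sL_1$ isomorphically onto $\sL_2$ inside $\sL_2^{\sK}$, because the divisor hypothesis forces each local transition function $\varphi_i=f_\gamma\,g_2^{(i)}/g_1^{(i)}$ to lie in $\sO_X(U_i)^\times$ (indeed $\divisor(\varphi_i)|_{U_i}=0$ means by the very definition of principal Cartier divisors that $\varphi_i$ is a section of $\sO_X^\times$ over $U_i$). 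What your approach buys is self-containedness: it never invokes the Picard group or the existence of an auxiliary isomorphism, and it produces $\beta$ as literally the restriction of $\gamma$ rather than as a corrected version of some other map. What the paper's approach buys is brevity, since the exact sequence relating rational functions, Cartier divisors and $\Pic(X)$ is already in place. The forward direction is essentially the same in both treatments: an isomorphism of $\sO_X$-modules preserves the divisor of a rational section, which is exactly what your local computation unwinds.
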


\begin{proof}
Assume that $\gamma=\beta^\sK$ for some isomorphism  (of sheaves of $\sO_X$-modules)
$\beta\colon \sL_1\to\sL_2$.
Then
$$
\beta(s_1)=f_\gamma\cdot s_2.
$$
Since $\beta$ is an isomorphism of sheaves of $\sO_X$-modules,
we have
$$
\divisor(\beta(s_1))=\divisor(s_1).
$$
Thus
$$
\divisor(f_\gamma)=\divisor(\beta(s_1))-\divisor(s_2)=\divisor(s_1)-\divisor(s_2).
$$

Conversely, assume that
$$
\divisor(f_\gamma)=\divisor(s_1)-\divisor(s_2).
$$
Then $\div(s_1)-\div(s_2)$ is a principal divisor, hence the classes of $\sL_1$ and $\sL_2$ in $\Pic(X)$ are equal.
It follows that there exists an isomorphism
$\beta'\colon \sL_1\to\sL_2$.
We obtain as above a rational function $f_{\beta'}$ such that
$$
\beta'(s_1)=f_{\beta'}\cdot s_2.
$$
As above, we have
$$
\divisor(f_{\beta'})=\divisor(s_1)-\divisor(s_2).
$$
We see that $\div(f_{\beta'})=\div(f_\gamma)$,
hence $f_\gamma=\varphi f_{\beta'}$ for some $\varphi\in\sO(X)^\times$.
Set $\beta=\varphi \beta'\colon\sL_1\to\sL_2$,
then $f_\beta=\varphi f_{\beta'}=f_\gamma$,
hence $\gamma=\beta^\sK$.
Thus $\gamma$ comes from the isomorphism $\beta$
of sheaves of $\sO_X$-modules.
\end{proof}

\begin{definition}
Let $X$ be a $G$-variety, where $G$ is connected and $X$ is integral.
We define
\begin{align*}
Z^1\alg&(G,\KDiv(X))=   \\
    &\{(z,D)\ |\ z\in Z\alg^1(G,\sK(X)),\ D\in\Div(X),\ \div(z)=w^*D-p_X^*D\}.
\end{align*}
We define a homomorphism
$$
d\colon \sK(X)^\times\to Z^1\alg(G,\KDiv(X)),\ f\mapsto (w^*(f)/p^*_X(f), \div(f)).
$$
We set $B^1\alg(G,\KDiv(X))=\im d$ and
$$
H\alg^1(G,\KDiv(X))=Z^1\alg(G,\KDiv(X))/B^1\alg(G,\KDiv(X)),
$$
as in \ref{subsec:KDiv}.
\end{definition}

\begin{theorem}\label{thm:PicG-H1}
There is a canonical isomorphism
$$
\Pic_G(X)\isoto H\alg^1(G,\KDiv(X)).
$$
\end{theorem}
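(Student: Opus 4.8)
The plan is to construct the isomorphism on the level of explicit data — divisors, rational functions, and sections — using the lemmas of Section~\ref{sec:Glin} as the technical engine. Given a class $[\sL,\beta]\in\Pic_G(X)$, I would first choose a nonzero rational section $s$ of $\sL$ and set $D=\divisor(s)\in\Div(X)$. Passing to the invertible $\sK_X$-sheaf $\sL^\sK$ with its section $s$, Lemma~\ref{lem:G-lin-L-R} says $\beta^\sK$ is a $G$-linearization of $\sL^\sK$, and Lemma~\ref{lem:K-linearizations} attaches to it a cocycle $z\in Z\alg^1(G,\sK(X)^\times)$ characterized by $\beta^\sK(w^*s)=z\cdot p_X^*s$. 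The identity $\beta^\sK(w^*s)=z\cdot p_X^*s$ forces $\divisor(z)=w^*D-p_X^*D$ (apply $\divisor$ to both sides, using that $\beta^\sK$ comes from the honest sheaf isomorphism $\beta$, so it preserves divisors of sections; this is exactly the computation in Lemma~\ref{lem:from regular isomorphism}). Hence $(z,D)\in Z^1\alg(G,\KDiv(X))$, and we send $[\sL,\beta]\mapsto\text{class of }(z,D)$.

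\textbf{Well-definedness.} I would check that the class of $(z,D)$ in $H\alg^1(G,\KDiv(X))$ does not depend on the choices. Changing the rational section from $s$ to $s'=\varphi s$ with $\varphi\in\sK(X)^\times$ replaces $D$ by $D+\divisor(\varphi)$ and, inspecting $\beta^\sK(w^*s')=z\cdot(w^*\varphi/p_X^*\varphi)^{-1}\cdot p_X^*s'$ — wait, more carefully: $w^*s'=w^*(\varphi)\,w^*s$ and $p_X^*s'=p_X^*(\varphi)\,p_X^*s$, so $z'=z\cdot w^*(\varphi)/p_X^*(\varphi)$. Thus $(z',D')=(z,D)+d(\varphi)$, i.e.\ the two pairs differ by a coboundary in $B^1\alg(G,\KDiv(X))$. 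Replacing $(\sL,\beta)$ by an isomorphic $G$-sheaf $(\sL',\beta')$ changes nothing after transporting $s$ through the isomorphism. So the map $\Pic_G(X)\to H\alg^1(G,\KDiv(X))$ is well defined, and it is clearly a homomorphism because tensor product of $G$-sheaves corresponds to multiplying cocycles and adding divisors.

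\textbf{Inverse map.} Conversely, given $(z,D)\in Z^1\alg(G,\KDiv(X))$, I would build a $G$-sheaf: let $\sL=\sL(D)$ be the invertible sheaf associated to $D$, with its canonical rational section $s$ satisfying $\divisor(s)=D$. The function $z$ defines, via Lemma~\ref{lem:K-linearizations}, a $G$-linearization $\gamma$ of $\sL^\sK$ with $\gamma(w^*s)=z\cdot p_X^*s$. Now the condition $\divisor(z)=w^*D-p_X^*D=\divisor(w^*s)-\divisor(p_X^*s)$ is precisely the hypothesis of Lemma~\ref{lem:from regular isomorphism} applied to the isomorphism $\gamma\colon (w^*\sL)^\sK\to(p_X^*\sL)^\sK$ (with $\sL_1=w^*\sL$, $s_1=w^*s$, $\sL_2=p_X^*\sL$, $s_2=p_X^*s$, $f_\gamma=z$), so $\gamma$ comes from an honest sheaf isomorphism $\beta\colon w^*\sL\to p_X^*\sL$. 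By Lemma~\ref{lem:G-lin-L-R}, since $\gamma=\beta^\sK$ is a $G$-linearization of $\sL^\sK$, the map $\beta$ is a $G$-linearization of $\sL$. Thus we obtain $(\sL,\beta)$ and send $(z,D)\mapsto[\sL,\beta]$. One checks this descends to $H\alg^1$: altering $(z,D)$ by $d(\varphi)$ alters $(\sL,\beta)$ by the isomorphism $\sL(D)\xrightarrow{\sim}\sL(D+\divisor\varphi)$, hence gives the same class. Finally the two constructions are mutually inverse essentially by their defining formulas: starting from $[\sL,\beta]$, choosing $s$, getting $(z,D)$, and rebuilding gives back $\sL(D)\cong\sL$ carrying $\beta$; starting from $(z,D)$, the section $s$ with $\divisor(s)=D$ reproduces the same $z$.

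\textbf{Main obstacle.} The delicate point is the uniqueness/rigidity of $\beta$ lifting $\gamma$: Lemma~\ref{lem:from regular isomorphism} only asserts existence of some $\beta$ with $\beta^\sK=\gamma$, but a priori $\beta$ could be modified by an element of $\sO(X\times G)^\times$, which is nontrivial since $G$ is only connected (not, say, with trivial units). I expect one must argue that the $G$-linearization $\gamma$ of $\sL^\sK$ pins down $\beta$ uniquely — this is exactly the content already extracted in the proof of Lemma~\ref{lem:G-lin-L-R}, where the comparison function $\psi\in\sO(X\times G\times G)^\times$ was forced to equal $1$. So the cocycle condition on $z$ is doing double duty: it guarantees $\beta$ is a linearization \emph{and} that it is the unique lift of $\gamma$. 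Keeping the bookkeeping of which choices matter (section $s$ versus sheaf isomorphism versus $G$-structure) straight across the two directions, and verifying compatibility with the abuse of notation identifying $H\alg^1(G,\KDiv(X))$ with $\sH^1(\UPic_G(X)\otimes_{\kbar}\,\cdot\,)$ up to the $\kbar^\times$ quotient, is the main thing to be careful about.
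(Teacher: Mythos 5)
Your proposal is correct and follows essentially the same route as the paper: the same two maps built from a choice of rational section via Lemmas \ref{lem:K-linearizations}, \ref{lem:from regular isomorphism} and \ref{lem:G-lin-L-R}, with the same (routine) verification that changing the section alters $(z,D)$ by a coboundary. The only quibble is that your ``main obstacle'' is not actually one: since $X\times G$ is integral and the sheaves are invertible, the functor $\sL\mapsto\sL^\sK$ is faithful, so a $\beta$ with $\beta^\sK=\gamma$ is automatically unique --- modifying $\beta$ by a nontrivial unit would change $\beta^\sK$ as well.
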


\begin{proof}
We construct a map
$$
\varkappa\colon \Pic_G(X)\to H\alg^1(G,\KDiv(X)).
$$
Let $[\sL,\beta]\in\Pic_G(X)$, where $\sL$ is an invertible sheaf on $X$ and
$$
\beta\colon w^*\sL\to p_X^*\sL
$$
is a $G$-linearization.
Tensoring with $\sK_X$ we obtain a $G$-linearization of $\sL^\sK$
$$
\beta^\sK\colon w^*\sL^\sK\to p_X^*\sL^\sK.
$$
Choose a rational section $s$ of $\sL$, i.e. a section of $\sL^\sK$.
By Lemma \ref{lem:K-linearizations}  $\beta^\sK$
corresponds to a cocycle  $z\in Z\alg^1(G,\sK(X)^\times)$
such that
$$
\beta^\sK(w^*s)=z\cdot p^*_X s.
$$
Since $\beta^\sK$ comes from an isomorphism of sheaves of $\sO_{X\times G}$-modules
$\beta\colon w^*\sL\to p_X^*\sL$,
by Lemma \ref{lem:from regular isomorphism} we have
$$
\div(z)=\div(w^*s)-\div(p_X^*s).
$$
Set $D=\divisor(s)$, then
$$
\div(z)=w^*D-p_X^*D.
$$
We see that $(z,D)\in Z^1\alg(G,\KDiv(X))$.
We set
$$
\varkappa([\sL,\beta])=[z,D]\in H^1\alg(G,\KDiv(X)).
$$
An easy calculation shows that $\varkappa$ is a well defined homomorphism.

We construct a map
$$
\lambda\colon   H\alg^1(G,\KDiv(X))\to \Pic_G(X).
$$
Let $(z,D)\in Z\alg^1(G,\KDiv(X))$.
Then the divisor $D$ defines an invertible sheaf $\sL$ together with a nonzero rational
section $s$ of $\sL$ (i.e a section of $\sL^\sK$)
such that $\div(s)=D$.
By Lemma \ref{lem:K-linearizations} $z$ defines a $G$-linearization
$$
\gamma\colon w^*\sL^\sK\to p_X^*\sL^\sK
$$
such that
\begin{equation}\label{eq:gamma}
\gamma(w^*s)=z\cdot p_X^*s.
\end{equation}
Since $(z,D)\in Z\alg^1(G,\KDiv(X))$, we have $\div(z)=w^*D-p_X^*D$, hence
\begin{equation}\label{eq:div}
\div(z)=\div(w^*s)-\div(p_X^*s).
\end{equation}
By Lemma \ref{lem:from regular isomorphism} it follows from \eqref{eq:gamma} and \eqref{eq:div}
that $\gamma$ comes from an isomorphism of $\sO_{X\times G}$-modules
$$
\beta\colon w^*\sL\to p_X^*\sL,
$$
that is, $\gamma =\beta^\sK$.
Since $\gamma$ is a $G$-linearization of $\sL^\sK$,
by Lemma \ref{lem:G-lin-L-R} $\beta$ is a $G$-linearization of $\sL$.
We set
$$
\lambda([z,D])=[\sL,\beta]\in\Pic_G(X).
$$
Easy calculations show that $\lambda$ is a well defined homomorphism
and that $\varkappa$ and $\lambda$ are mutually inverse.
Thus $\varkappa$ is an isomorphism.
\end{proof}


\section{Relations between $\UPic_G(\Xbar)$ and $\UPic(\Xbar)$}
  \label{sec:rel-upic_g-upic}

We return to the assumptions and notation of \S\ref{subsec:G}.
Theorem \ref{thm:PicG-H1} says that
we have a canonical isomorphism of Galois modules
$$
  \Halg^1(\Gbar, \KDiv(\Xbar))  \cong \Pic_G(\Xbar).
$$

\begin{corollary}\label{cor:coh-upicg}
  Let $X$ and $G$ be as in \S\ref{subsec:G}.
 We have a canonical isomorphism of Galois modules
\[
  \sH^1(\UPic_G(\Xbar)) \cong \Pic_G(\Xbar).
\]
\end{corollary}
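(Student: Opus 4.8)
The plan is to unwind the definitions and combine them with Theorem~\ref{thm:PicG-H1}. Recall that by Definition~\ref{def:UPicG} we have $\UPic_G(\Xbar)=\tau_{\le 1}\Calg^*(\Gbar,\KDiv(\Xbar))/\kbar^\times$, so that $\sH^1(\UPic_G(\Xbar))=\Halg^1(\Gbar,\KDiv(\Xbar))$: indeed passing to the good truncation $\tau_{\le 1}$ does not change cohomology in degrees $\le 1$, and quotienting the degree-$0$ term by the constant sheaf $\kbar^\times$ does not affect $\sH^1$ since $\kbar^\times$ sits inside the kernel of the degree-$0$ differential and contributes only to $\sH^0$ (this is exactly the computation already recorded just after Definition~\ref{def:UPicG}, where $\sH^1(\UPic_G(\Xbar))=\Halg^1(\Gbar,\KDiv(\Xbar))$ was stated).

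Once that identification is in place, the corollary is immediate from Theorem~\ref{thm:PicG-H1} applied to $X=\Xbar$ over the algebraically closed field $\kbar$: it gives a canonical isomorphism $\Pic_G(\Xbar)\isoto\Halg^1(\Gbar,\KDiv(\Xbar))$. Composing, we obtain the canonical isomorphism $\sH^1(\UPic_G(\Xbar))\cong\Pic_G(\Xbar)$ asserted in the statement. So the structure of the proof is: (i) recall $\sH^1(\UPic_G(\Xbar))=\Halg^1(\Gbar,\KDiv(\Xbar))$ from the discussion following Definition~\ref{def:UPicG}; (ii) invoke Theorem~\ref{thm:PicG-H1}; (iii) compose.

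The only point that requires a word of justification is \emph{Galois equivariance} of the isomorphism. Theorem~\ref{thm:PicG-H1} was proved for a $G$-variety over an algebraically closed field and produces the isomorphism $\varkappa$ by choosing a rational section $s$ of $\sL$; a priori this choice could destroy equivariance. However, the class $\varkappa([\sL,\beta])=[z,D]$ in $\Halg^1(\Gbar,\KDiv(\Xbar))$ is independent of the choice of $s$ (this is the well-definedness checked in the proof of Theorem~\ref{thm:PicG-H1}), so $\varkappa$ is canonical; and a canonical construction attached functorially to the datum $(\Xbar,\Gbar,\sL,\beta)$ automatically commutes with the semilinear action of $\Gal(\kbar/k)$ obtained by base change along automorphisms of $\kbar/k$. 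Hence $\varkappa$ is an isomorphism of Galois modules, and this is the main (and essentially the only) obstacle; everything else is bookkeeping with truncations. I would state this as the proof:

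\begin{proof}
By Definition~\ref{def:UPicG} the complex $\UPic_G(\Xbar)$ is obtained from $\Calg^*(\Gbar,\KDiv(\Xbar))$ by applying the good truncation $\tau_{\le 1}$ and dividing the term in degree $0$ by the subgroup $\kbar^\times$. Since $\kbar^\times$ is contained in the kernel of the differential out of degree $0$, this operation does not change the cohomology in degree $1$, so
\[
\sH^1(\UPic_G(\Xbar))=\Halg^1(\Gbar,\KDiv(\Xbar)),
\]
as already observed after Definition~\ref{def:UPicG}. By Theorem~\ref{thm:PicG-H1}, applied to the $\Gbar$-variety $\Xbar$ over the algebraically closed field $\kbar$, there is a canonical isomorphism of abelian groups
\[
\Pic_G(\Xbar)\isoto\Halg^1(\Gbar,\KDiv(\Xbar)).
\]
This isomorphism is independent of the auxiliary choices made in its construction (cf. the verification of well-definedness in the proof of Theorem~\ref{thm:PicG-H1}), hence it is compatible with the natural action of $\Gal(\kbar/k)$ on both sides, i.e.\ it is an isomorphism of Galois modules. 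Composing the two displayed isomorphisms yields the canonical isomorphism of Galois modules
\[
\sH^1(\UPic_G(\Xbar))\cong\Pic_G(\Xbar).
\]
\end{proof}
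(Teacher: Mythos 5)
Your proposal is correct and follows exactly the paper's own route: the identification $\sH^1(\UPic_G(\Xbar))=\Halg^1(\Gbar,\KDiv(\Xbar))$ recorded after Definition~\ref{def:UPicG}, composed with the canonical isomorphism of Theorem~\ref{thm:PicG-H1}. Your extra remark that canonicity of $\varkappa$ yields Galois equivariance is a reasonable elaboration of a point the paper leaves implicit when it asserts at the start of Section~\ref{sec:rel-upic_g-upic} that the isomorphism is one of Galois modules.
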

\begin{proof}
  This follows immediately from Theorem~\ref{thm:PicG-H1}, since
$\sH^1(\UPic_G(\Xbar)) = \Halg^1(\Gbar, \KDiv(\Xbar))$.
\end{proof}

\begin{question}
What is a geometric interpretation of the cohomology groups
  \\$\Halg^n(\Gbar, \KDiv(\Xbar))$  for $n > 1$?
\end{question}

\begin{lemma}[{\cite[Lemma~2.2]{KKV}}]
\label{cor:exact-seq-picg-pic-g-conn-trivpic}
  Let $X$ and $G$ be as in \S\ref{subsec:G} and assume that $X$ is normal.
Then the canonical homomorphism $\Pic_G(\Xbar) \to  \Pic(\Xbar)$ fits into an exact sequence
  \begin{equation}
    \label{eq:exact-seq-picg-pic-g-conn-trivpic}
        0 \to \Halg^1(\Gbar, \OXbar) \to \Pic_G(\Xbar) \to  \Pic(\Xbar) \to \Pic(\Gbar).
\end{equation}
\qed
\end{lemma}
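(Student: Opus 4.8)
Since this is \cite[Lemma~2.2]{KKV}, one could simply cite it; but it also fits naturally into the framework developed above, and here is the plan. By Theorem~\ref{thm:PicG-H1} (cf.\ Corollary~\ref{cor:coh-upicg}) I would replace $\Pic_G(\Xbar)$ by $\Halg^1(\Gbar,\KDiv(\Xbar))=\sH^1(\Calg^*(\Gbar,\KDiv(\Xbar)))$, and, since $\Xbar$ is normal, $\Pic(\Xbar)$ by $\sH^1(\KDiv(\Xbar))$ (recall that by normality $\ker(\divisor\colon\KXbar\to\Div(\Xbar))=\sO(\Xbar)^\times$). In the double complex of~\ref{subsec:KDiv} the column $p=0$ is exactly $\KDiv(\Xbar)$, and the terms with $p\ge 1$ span a subcomplex $C'$ of the total complex, giving a short exact sequence of complexes of Galois modules
\[
0\to C'\to\Calg^*(\Gbar,\KDiv(\Xbar))\to\KDiv(\Xbar)\to 0 .
\]
A short computation of the low-degree cohomology of $C'$ — using normality of each $\Xbar\times\Gbar^p$ to identify the degree-$0$ cohomology of the $p$-th column with $\Calg^p(\Gbar,\OXbar)$ and its degree-$1$ cohomology with $\Pic(\Xbar\times\Gbar^p)$ — yields $\sH^0(C')=0$ and $\sH^1(C')=\Zalg^1(\Gbar,\OXbar)$.

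Next I would write down the long exact cohomology sequence; its initial segment is
\[
0\to(\sO(\Xbar)^\times)^G\to\sO(\Xbar)^\times\xrightarrow{\ \partial\ }\Zalg^1(\Gbar,\OXbar)\to\Pic_G(\Xbar)\to\Pic(\Xbar)\xrightarrow{\ \delta\ }\sH^2(C') ,
\]
where $(\sO(\Xbar)^\times)^G=\Halg^0(\Gbar,\KDiv(\Xbar))$ as computed after Definition~\ref{def:UPicG}. Tracing the connecting map $\partial$ shows that it coincides with $d^0\colon f\mapsto(f\circ w)/(f\circ p_X)$, so $\Image\partial=\Balg^1(\Gbar,\OXbar)$ and hence $\Zalg^1(\Gbar,\OXbar)/\Image\partial=\Halg^1(\Gbar,\OXbar)$. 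The sequence therefore restricts to an exact sequence
\[
0\to\Halg^1(\Gbar,\OXbar)\to\Pic_G(\Xbar)\to\Pic(\Xbar)\xrightarrow{\ \delta\ }\sH^2(C') ,
\]
so $\Halg^1(\Gbar,\OXbar)\to\Pic_G(\Xbar)$ is injective, $\ker[\Pic_G(\Xbar)\to\Pic(\Xbar)]=\Halg^1(\Gbar,\OXbar)$, and $\ker\delta=\Image[\Pic_G(\Xbar)\to\Pic(\Xbar)]$. This settles the left-hand part of the asserted sequence essentially for free.

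It remains to replace the term $\sH^2(C')$ by $\Pic(\Gbar)$. I would define $r\colon\Pic(\Xbar)\to\Pic(\Gbar)$ by sending $[\sL]$ to the restriction of $w^*\sL\otimes p_X^*\sL^{-1}$ to a fibre $\{x\}\times\Gbar$, and the whole point is to show $\ker r=\ker\delta$. Here two standard facts are needed. First, for $\Gbar$ connected linear and $\Xbar$ normal, pullback gives $\Pic(\Xbar\times\Gbar)=p_X^*\Pic(\Xbar)\oplus p_G^*\Pic(\Gbar)$; this makes $r$ well defined and canonical (the projection onto the second summand is restriction to any fibre $\{x\}\times\Gbar$, independent of $x$ because the first summand restricts trivially there), hence Galois-equivariant, and — since $w^*\sL\otimes p_X^*\sL^{-1}$ restricts trivially to $\Xbar\times\{e\}$ — it gives $r([\sL])=0\iff w^*\sL\cong p_X^*\sL$. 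Second, for $G$ connected a line bundle $\sL$ on $\Xbar$ with $w^*\sL\cong p_X^*\sL$ always admits a $G$-linearization: the obstruction to improving a chosen isomorphism $w^*\sL\cong p_X^*\sL$ to a $G$-linearization lies in $\Halg^2(\Gbar,\OXbar)$, and this group vanishes by a computation with invertible regular functions (Rosenlicht's lemma, using $\sO(\Gbar)^\times/\kbar^\times=\Chi(\Gbar)$ and the triviality of the $G$-action on $U(\Xbar)$ for connected $G$). Combining these: $r([\sL])=0$ iff $w^*\sL\cong p_X^*\sL$ iff $\sL$ is $G$-linearizable iff $[\sL]\in\Image[\Pic_G(\Xbar)\to\Pic(\Xbar)]=\ker\delta$, so $\ker r=\ker\delta$ and the exact sequence
\[
0\to\Halg^1(\Gbar,\OXbar)\to\Pic_G(\Xbar)\to\Pic(\Xbar)\xrightarrow{\ r\ }\Pic(\Gbar)
\]
follows. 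The main obstacle is this cokernel side — the two standard inputs about $\Pic(\Xbar\times\Gbar)$ and about $G$-linearizability of line bundles with $w^*\sL\cong p_X^*\sL$ — which together amount to \cite[Lemma~2.2]{KKV}; one may of course simply invoke that reference.
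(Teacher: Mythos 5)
The paper offers no proof of this lemma at all: it is stated with the citation \cite[Lemma~2.2]{KKV} in its header and closed immediately with a qed symbol, so the only thing to compare your argument with is KKV's own. Your reconstruction is correct, and its left-hand half is actually organized more in the spirit of this paper than of \cite{KKV}: filtering the double complex of \S\ref{subsec:KDiv} by the cochain degree does give a short exact sequence of complexes $0\to C'\to\Calg^*(\Gbar,\KDiv(\Xbar))\to\KDiv(\Xbar)\to 0$, your computations $\sH^0(C')=0$ and $\sH^1(C')=\Zalg^1(\Gbar,\OXbar)$ are right, the connecting map is indeed $d^0$, and together with Theorem~\ref{thm:PicG-H1} this yields exactness at $\Halg^1(\Gbar,\OXbar)$ and at $\Pic_G(\Xbar)$ with no further input. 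The right-hand half is where the real content of \cite[Lemma~2.2]{KKV} sits, and you isolate the two genuine inputs correctly: the decomposition $\Pic(\Xbar\times\Gbar)\cong p_X^*\Pic(\Xbar)\oplus p_G^*\Pic(\Gbar)$, and the vanishing $\Halg^2(\Gbar,\OXbar)=0$, which does follow from the same Rosenlicht decomposition $u=v(x)\chi_1(g_1)\cdots\chi_n(g_n)$ that the paper exploits in Proposition~\ref{cor:cocycle} to compute $\Zalg^1(\Gbar,\OXbar)$. Your logic also correctly avoids having to identify $\sH^2(C')$ with anything: you only use $\ker\delta=\Image[\Pic_G(\Xbar)\to\Pic(\Xbar)]$ from the long exact sequence and then show $\ker r$ equals that same image. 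One small correction: normality is not what makes $\ker(\divisor\colon\sK(\Xbar)^\times\to\Div(\Xbar))$ equal to $\sO(\Xbar)^\times$ --- for Cartier divisors on an integral variety this is automatic; normality is genuinely needed only for the K\"unneth-type decomposition of $\Pic(\Xbar\times\Gbar)$, which is exactly where you invoke it.
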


From now on  we shall assume that the natural map $\Pic_G(\Xbar) \to \Pic(\Xbar)$ is surjective.
By Lemma \ref{cor:exact-seq-picg-pic-g-conn-trivpic} this condition is satisfied when $X$ is normal and $\Pic(\Gbar)=0$
(which  can be forced for homogeneous spaces of connected groups, as we shall see below).
Examples of $k$-groups that satisfy  $\Pic(\Gbar)=0$ are
algebraic tori, simply connected  semisimple groups, and quasi-trivial groups.

\begin{lemma}\label{lem:upicg-decomp}
  Let $X$ and $G$ be as in \S\ref{subsec:G}. If $\Pic_G(\Xbar) \to
  \Pic(\Xbar)$ is surjective, we have a short exact sequence of
  complexes
\begin{equation*}
  \label{eq:upicg-decomp}
  0 \to  \Zalg^1(\Gbar,\OXbar)[-1]  \labelto{\mu} \UPic_G(\Xbar) \labelto{\nu}
\UPic(\Xbar) \to 0,
\end{equation*}
which is functorial in $X$ and $G$.
Here $\nu$ is the morphism of \S\ref{subsec:upicg-forgetful-map}, and for $c\in \Zalg^1(\Gbar,\OXbar)$ we set
$$
\mu^1(c)=(c^{-1},0)\in \UPic_G(\Xbar)^1\subset \Zalg^1(\Gbar,\sK(\Xbar)^\times)\oplus\Div(\Xbar).
$$
\end{lemma}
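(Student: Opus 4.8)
The plan is to exhibit the three complexes in explicit degree-wise terms and check exactness componentwise, then verify functoriality. Recall that $\UPic_G(\Xbar)$ is concentrated in degrees $0$ and $1$, with
\[
\UPic_G(\Xbar)^0 = \KXbar/\kbar^\times, \qquad
\UPic_G(\Xbar)^1 = \{(z,D) \in \Zalg^1(\Gbar,\KXbar)\oplus\Div(\Xbar) \colon \divisor^1(z) = d^0_{\Div}(D)\},
\]
while $\UPic(\Xbar)^0 = \KXbar/\kbar^\times$, $\UPic(\Xbar)^1 = \Div(\Xbar)$, and $\Zalg^1(\Gbar,\OXbar)[-1]$ is the complex with $\Zalg^1(\Gbar,\OXbar)$ in degree $1$ and zero elsewhere. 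In degree $0$ the sequence reads $0 \to 0 \to \KXbar/\kbar^\times \xrightarrow{\id} \KXbar/\kbar^\times \to 0$, which is trivially exact. All the content is in degree $1$.

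First I would check that $\mu$ and $\nu$ are morphisms of complexes. For $\nu$ this is already recorded in \S\ref{subsec:upicg-forgetful-map}. For $\mu$ one must check $\mu^1 \circ (\text{nothing}) = (\text{differential})\circ\mu^0$ is vacuous, so the only point is that $\mu^1(c) = (c^{-1},0)$ actually lands in $\UPic_G(\Xbar)^1$: since $c \in \Zalg^1(\Gbar,\OXbar)$ we have $c^{-1} \in \Zalg^1(\Gbar,\OXbar) \subset \Zalg^1(\Gbar,\KXbar)$ and $\divisor^1(c^{-1}) = 0 = d^0_{\Div}(0)$, so $(c^{-1},0)$ satisfies the defining constraint. (The inversion is present so that $\mu$ is compatible with the additive, i.e.\ logarithmic, conventions on cochains; one may alternatively drop it, but then $\mu$ is only a morphism up to sign, which is why the statement fixes this normalization.) Next, injectivity of $\mu^1$: if $(c^{-1},0) = 0$ in $\UPic_G(\Xbar)^1$ then $c^{-1} = 1$ in $\Zalg^1(\Gbar,\KXbar)$, hence $c = 1$. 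Also $\nu^1 \circ \mu^1 = 0$ since $\nu^1(c^{-1},0) = 0$. Exactness at the middle in degree $1$: if $(z,D) \in \UPic_G(\Xbar)^1$ has $\nu^1(z,D) = D = 0$, then $\divisor^1(z) = d^0_{\Div}(0) = 0$, so $z$ is an invertible \emph{regular} function on $\Xbar\times\Gbar$ lying in $\Zalg^1$, i.e.\ $z \in \Zalg^1(\Gbar,\OXbar)$, and then $(z,D) = \mu^1(z^{-1})$. Finally, surjectivity of $\nu^1$ onto $\Div(\Xbar)$ is exactly where the hypothesis is used: given $D \in \Div(\Xbar)$, I must produce $z \in \Zalg^1(\Gbar,\KXbar)$ with $\divisor^1(z) = d^0_{\Div}(D) = w^*D - p_X^*D$; the class of $\sL = \sO_{\Xbar}(D)$ lies in $\Pic(\Xbar)$, and by the surjectivity assumption $\Pic_G(\Xbar)\to\Pic(\Xbar)$ it admits a $G$-linearization $\beta$, so by Theorem~\ref{thm:PicG-H1} (more precisely by Lemma~\ref{lem:K-linearizations} applied to $\sL^\sK$ with the rational section cutting out $D$) one gets such a $z$, and then $(z,D) \in \UPic_G(\Xbar)^1$ maps to $D$.

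The main obstacle is precisely this last surjectivity step, since it is the only place the hypothesis $\Pic_G(\Xbar)\to\Pic(\Xbar)$ surjective enters, and it requires correctly matching the cocycle $z$ produced by Lemma~\ref{lem:K-linearizations} to the chosen divisor $D$; everything else is a formal diagram chase. It remains to note functoriality in $X$ (for dominant $G$-morphisms, or after the local modification of \S\ref{subsec:functoriality} in general) and in $G$: the pull-back homomorphisms on $\Calg^*(\Gbar,\hvar)$, hence on $\UPic_G$ and $\UPic$, and on $\Zalg^1(\Gbar,\OXbar)$, are visibly compatible with $\mu$ and $\nu$ since these are defined by the same formulas degree by degree, so the short exact sequence is natural.
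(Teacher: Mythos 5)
Your proof is correct and follows essentially the same route as the paper: the paper packages the degree-$1$ verification as a diagram chase in the two-row diagram whose exact top row is supplied by Theorem~\ref{thm:PicG-H1}, while you unwind the same content elementwise (kernel of $\nu^1$ computed directly, surjectivity obtained by linearizing $\sO_{\Xbar}(D)$ via Lemmas~\ref{lem:K-linearizations} and~\ref{lem:from regular isomorphism}). Both arguments use the hypothesis only for surjectivity of $\nu^1$ and treat functoriality identically.
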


\begin{proof}
Consider the canonical morphism of complexes
$\nu\colon\UPic_G(\Xbar) \to \UPic(\Xbar)$
of \S\ref{subsec:upicg-forgetful-map}.
An easy diagram chase in the commutative diagram with exact rows
$$
\xymatrix{
\sK(\Xbar)^\times/\kbar^\times \ar[r]\ar@{=}[d] &\UPic_G(\Xbar)^1 \ar[r]\ar[d]^{\nu^1} &\Pic_G(\Xbar) \ar[r]\ar[d] &0\\
\sK(\Xbar)^\times/\kbar^\times   \ar[r]     & \Div(\Xbar)     \ar[r]       &\Pic(\Xbar)   \ar[r]       &0
}
$$
shows that if the map $\Pic_G(\Xbar) \to \Pic(\Xbar)$ is surjective,
then the map $\nu^1\colon\UPic_G(\Xbar)^1\to \Div(\Xbar)$ is surjective
and hence the morphism of complexes $\nu\colon\UPic_G(\Xbar) \to \UPic(\Xbar)$ is surjective.
On the other hand, the kernel of the morphism  $\nu$
coincides with the complex
$\Zalg^1(\Gbar,\OXbar)[-1]=\im \mu$.
\end{proof}

In the rest of this section we derive consequences of Lemma \ref{lem:upicg-decomp}.
We consider a more general setting.

\begin{subsec}\label{subsec:complexes}
 Let
\begin{equation}\label{eq:complexes}
0\to A\updot \labelto{\mu} B\updot
                         \labelto{\nu} C\updot\to 0
\end{equation}
be a short exact sequence of complexes in an abelian category,
e.g. in the category of discrete $\ggg$-modules where $\ggg$ is a profinite group.
By a morphism $\varphi$ of exact sequences
we mean a commutative diagram
$$
\xymatrix{
0\ar[r]   &A\updot\ar[r]\ar[d]^{\varphi_A} &B\updot\ar[r]\ar[d]^{\varphi_B} &C\updot\ar[r]\ar[d]^{\varphi_C} &0\\
0\ar[r]   &A\upprimedot\ar[r]               &B\upprimedot\ar[r]             &C\upprimedot\ar[r]         &0.
}
$$
We say that $\varphi$ is a quasi-isomorphism of exact sequences if
$\varphi_A$, $\varphi_B$ and $\varphi_C$ are quasi-isomorphisms.
\end{subsec}

\begin{lemma}[well known]\label{lem:cone-hom}
 Consider a short exact sequence of complexes as in \eqref{eq:complexes}.
Define a morphism of complexes
$$
\lambda\colon\langle A\updot\to B\updot] \to C\updot\quad\text{by}\quad\lambda^i(a^{i+1},b^i)=\nu^i(b^i).
$$
Then $\lambda$ is indeed  a morphism of complexes and is a quasi-isomorphism, functorial in the  exact sequence  \eqref{eq:complexes}.
\end{lemma}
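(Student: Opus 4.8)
The plan is to verify directly that the prescribed formula $\lambda^i(a^{i+1},b^i)=\nu^i(b^i)$ defines a morphism of complexes and then identify its cone (or use the long exact sequence of a mapping cone) to see it is a quasi-isomorphism. First I would recall that, by definition, $\langle A\updot\to B\updot]^i = A^{i+1}\oplus B^i$ with differential $(a^{i+1},b^i)\mapsto(-d_A a^{i+1},\,\mu^{i+1}(a^{i+1})+d_B b^i)$, so that checking $\lambda$ commutes with differentials amounts to the single identity $\nu^{i+1}(\mu^{i+1}(a^{i+1})+d_B b^i)=d_C\,\nu^i(b^i)$; this follows at once from $\nu\circ\mu=0$ and $\nu\circ d_B=d_C\circ\nu$. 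Functoriality in the exact sequence \eqref{eq:complexes} is then automatic, since every map in sight ($\mu$, $\nu$, the differentials) is part of the data of a morphism of exact sequences and $\lambda$ is built tautologically from $\nu$.

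The substantive point is that $\lambda$ is a quasi-isomorphism. Here I would argue that $\lambda$ is a termwise-surjective morphism whose kernel complex is acyclic, whence it induces isomorphisms on cohomology. The kernel of $\lambda^i$ consists of pairs $(a^{i+1},b^i)$ with $\nu^i(b^i)=0$, i.e. $b^i$ lies in the image of $\mu^i$; writing $b^i=\mu^i(\tilde a^i)$ (using that $\mu$ is a monomorphism, since \eqref{eq:complexes} is exact) identifies $\ker\lambda$ with the complex whose degree-$i$ term is $A^{i+1}\oplus A^i$ and whose differential sends $(a^{i+1},a^i)$ to $(-d_A a^{i+1},\,a^{i+1}+d_A a^i)$. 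This is precisely the mapping cone of the identity $\mathrm{id}\colon A\updot\to A\updot$ (up to the usual shift), which is well known to be acyclic: the contracting homotopy is $(a^{i+1},a^i)\mapsto(a^i,0)$, or one simply invokes that $\mathrm{id}$ is a quasi-isomorphism and hence its cone is exact. Since $\lambda$ sits in a short exact sequence of complexes $0\to\ker\lambda\to\langle A\updot\to B\updot]\to C\updot\to 0$ with $\ker\lambda$ acyclic, the long exact cohomology sequence forces $\sH^i(\lambda)$ to be an isomorphism for all $i$.

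Alternatively—and this is perhaps cleaner to record—one observes that $\langle A\updot\to B\updot]$ is by construction the cone of $\mu\colon A\updot\to B\updot$, and the short exact sequence \eqref{eq:complexes} exhibits $C\updot$ as the cokernel of the monomorphism $\mu$; for a monomorphism of complexes in an abelian category the cone is canonically quasi-isomorphic to the cokernel, with the comparison map being exactly $\lambda$. I expect the only mild obstacle to be bookkeeping: getting the signs in the cone differential and the degree shift to match the conventions fixed in \S1.1 and the preliminaries, so that the homotopy (or the snake-lemma argument) is written consistently. None of this is deep; the lemma is indeed, as stated, well known, and the proof is a short diagram chase once the definition of the cone is unwound.
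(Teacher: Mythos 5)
Your proof is correct and is exactly the standard argument: the paper does not prove this lemma itself but simply refers to Gelfand--Manin and Weibel \cite[1.5.8]{Weibel}, and the proof given there is the one you reconstruct (check the chain-map identity from $\nu\circ\mu=0$, observe $\lambda$ is termwise surjective with kernel isomorphic to the cone of $\id_{A\updot}$, which is contractible, and conclude by the long exact sequence). Your verification of the homotopy $(a^{i+1},a^i)\mapsto(a^i,0)$ and the functoriality remark are both fine, so nothing is missing.
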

\begin{proof}
 See \cite[Ch. III, \S 3, Proof of Prop. 5]{GM}  or \cite[1.5.8]{Weibel} for a proof that $\lambda$ is a morphism of complexes and a quasi-isomorphism.
The functoriality is obvious.
\end{proof}

\begin{lemma}[well known]
\label{lem:quasi-isom}
Assume we have a commutative square  of complexes
$$
\xymatrix{
P\ar[d]_{\gamma_P}\ar[r]  &Q\ar[d]^{\gamma_Q}\\
P'\ar[r]                  &Q'
}
$$
where all the arrows are morphisms of complexes and
the two vertical arrows $\gamma_P$ and $\gamma_Q$ are quasi-isomorphisms.
Then the induced morphism of cones
$$
\langle P\to Q]\labelto{\gamma_*} \langle P'\to Q']
$$
is a quasi-isomorphism.
\end{lemma}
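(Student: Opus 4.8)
The plan is to derive the statement from the long exact cohomology sequence of a mapping cone, combined with the five lemma.

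First I would recall that for any morphism of complexes $f\colon P\to Q$ the cone $\langle P\to Q]$, whose term in degree $n$ is $P^{n+1}\oplus Q^n$ with the usual differential, sits in a short exact sequence of complexes
$$
0\to Q\to \langle P\to Q]\to P[1]\to 0,
$$
where the first map is the inclusion $q\mapsto(0,q)$ and the second is the projection $(p,q)\mapsto p$. This sequence is functorial with respect to commutative squares of complexes: the commutative square in the hypothesis induces a morphism from the sequence attached to $f\colon P\to Q$ to the one attached to $P'\to Q'$, given on the three terms by $\gamma_Q$, $\gamma_*$ and $\gamma_P[1]$ respectively. Taking the associated long exact cohomology sequences, I obtain a commutative ladder between
$$
\cdots\to\sH^{i}(Q)\to\sH^{i}(\langle P\to Q])\to\sH^{i+1}(P)\to\sH^{i+1}(Q)\to\cdots
$$
and its primed analogue, with vertical maps induced by $\gamma_Q$, $\gamma_*$ and $\gamma_P[1]$.

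Now in every five-term window of this ladder centred at a term $\sH^{i}(\langle P\to Q])\to\sH^{i}(\langle P'\to Q'])$, the other four vertical arrows are cohomology maps induced by $\gamma_P$ or $\gamma_Q$; since these are quasi-isomorphisms and the shift functor $[1]$ sends quasi-isomorphisms to quasi-isomorphisms, all four are isomorphisms. The five lemma then forces $\sH^{i}(\gamma_*)$ to be an isomorphism for every $i$, i.e.\ $\gamma_*$ is a quasi-isomorphism.

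I do not anticipate any real obstacle here: the only points needing (entirely routine) verification are the functoriality of the cone construction with respect to commutative squares — immediate from the explicit description of the terms and differential of $\langle P\to Q]$ — and the triviality that $[1]$ preserves quasi-isomorphisms. Alternatively one could bypass the long exact sequences altogether and prove the statement by a direct diagram chase using the explicit cone differential, but the argument via the five lemma is the cleanest.
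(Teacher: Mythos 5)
Your proof is correct and takes the same route as the paper, whose entire proof reads ``The lemma follows easily from the five-lemma''; you have simply supplied the standard details (the functorial short exact sequence $0\to Q\to\langle P\to Q]\to P[1]\to 0$, the induced ladder of long exact cohomology sequences, and the five lemma). Nothing further is needed.
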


\begin{proof}
The lemma follows easily from the five-lemma.
\end{proof}

\begin{construction}\label{con:H0-B}
Let
\begin{equation}
0\to [0\to A^1\rangle \labelto{\mu} [B^0\to B^1\rangle
                         \labelto{\nu} [C^0\to C^1\rangle\to 0\tag{S}
\end{equation}
be a short exact sequence of complexes  in an abelian category.
We write
$$
A\updot=[0\to A^1\rangle,\quad B\updot=[B^0\labelto{d_B} B^1\rangle,\quad
                                          C\updot=[C^0\labelto{d_C} C^1\rangle,
$$
so we have an exact sequence of complexes
$$
0\to A\updot\labelto{\mu} B\updot\labelto{\nu} C\updot\to 0.
$$
Assume that the  following condition is satisfied:
\begin{equation}
\sH^0(B\updot)=0.\tag{$\sH^0$}
\end{equation}
Then we have a canonical quasi-isomorphism $B\updot\to\sH^1(B\updot)[-1]$.
By Lemma \ref{lem:quasi-isom} the induced morphism of cones
$$
\langle A\updot\to B\updot]\to\langle A\updot\to\sH^1(B\updot)[-1]]
$$
is a quasi-isomorphism.
Since $A\updot=A^1[-1]$, we obtain a quasi-isomorphism
$$
\varepsilon\colon \langle A\updot\to B\updot]\to [A^1\labelto{\sigma}\sH^1(B\updot)\rangle,
$$
where $\sigma(a^1)=-\mu^1(a^1) + \im\,d_B\in \sH^1(B\updot)$ \  for $a^1\in A^1$.

We write formulae for $\varepsilon$:
$$
\varepsilon^0(a^1,b^0)=a^1,\quad \varepsilon^1(b^1)= b^1 + \im d_B.
$$
\end{construction}

\begin{corollary}\label{cor:H0-B}
(a) Let $(S)$ be an exact sequence of complexes as in Construction \ref{con:H0-B},
satisfying  condition $(\sH^0)$.
Then there is a canonical, functorial in $(S)$ isomorphism in the derived category
$$
[A^1\labelto{\sigma}\sH^1(B\updot)\rangle \labelto{\sim} C\updot
$$
given by the diagram
$$
\xymatrix{
{[A^1\labelto{\sigma}\sH^1(B\updot)\rangle}   &\langle A\updot\to B\updot]\ar[l]_-{\varepsilon}\ar[r]^-{\lambda}  &C\updot
}
$$
where $\varepsilon$ is the quasi-isomorphism of Construction \ref{con:H0-B}
and $\lambda$ is the quasi-isomorphism of Lemma \ref{lem:cone-hom}.

(b) Let $\varphi\colon (S)\to (S')$ be a quasi-isomorphism of exact sequences as in
Construction \ref{con:H0-B},
and assume that both $(S)$ and $(S')$ satisfy the condition $(\sH^0)$.
Then in the commutative diagram
$$
\xymatrix{
{\left[A^1\labelto{\sigma}\sH^1(B\updot)\right\rangle}\ar[d]
                             &\left\langle A\updot\to B\updot\right]\ar[d]\ar[l]_-{\varepsilon}\ar[r]^-{\lambda}  &C\updot\ar[d]\\
{\left[A^{\prime 1}\labelto{\sigma'}\sH^1(B\upprimedot)\right\rangle}
                                  &\left\langle A\upprimedot\to B\upprimedot\right]\ar[l]_-{\varepsilon'}\ar[r]^-{\lambda'}  &C\upprimedot
}
$$
all the vertical arrows are quasi-isomorphisms.
\end{corollary}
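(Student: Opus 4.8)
The plan is to read off part (a) almost directly from Construction~\ref{con:H0-B} and Lemma~\ref{lem:cone-hom}, and to deduce part (b) from Lemma~\ref{lem:quasi-isom} together with a one-line computation of the two outer vertical arrows.

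For part (a), by Lemma~\ref{lem:cone-hom} the map $\lambda$ is a quasi-isomorphism, and by Construction~\ref{con:H0-B} so is $\varepsilon$; hence the roof $\lambda\circ\varepsilon^{-1}$ gives a well-defined isomorphism $[A^1\labelto{\sigma}\sH^1(B\updot)\rangle\isoto C\updot$ in the derived category. For functoriality in $(S)$, I would use that $\lambda$ is functorial by Lemma~\ref{lem:cone-hom}, and that $\varepsilon$ is functorial because it is obtained by applying the cone construction of Lemma~\ref{lem:quasi-isom} to the square with left vertical arrow $\id_{A\updot}$ and right vertical arrow the canonical truncation quasi-isomorphism $B\updot\to\tau_{\geq 1}B\updot=\sH^1(B\updot)[-1]$, both of which are functorial in $B\updot$; equivalently, functoriality of $\varepsilon$ can simply be checked on the explicit formulas $\varepsilon^0(a^1,b^0)=a^1$ and $\varepsilon^1(b^1)=b^1+\im d_B$. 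Composing, the resulting isomorphism in the derived category is functorial in $(S)$.

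For part (b), I would first note that the left-hand square of the displayed diagram commutes by the functoriality of $\varepsilon$ just used, and the right-hand square commutes by the functoriality of $\lambda$. It then suffices to show that each of the three vertical arrows is a quasi-isomorphism. The right-hand arrow is $\varphi_C$, a quasi-isomorphism by the hypothesis that $\varphi$ is a quasi-isomorphism of exact sequences. The middle arrow is the morphism of cones induced by the left-hand square of $\varphi$, whose vertical maps $\varphi_A$ and $\varphi_B$ are quasi-isomorphisms, so Lemma~\ref{lem:quasi-isom} applies directly. The left-hand arrow is the morphism of two-term complexes with components $\varphi_A$ in degree $0$ and $\sH^1(\varphi_B)$ in degree $1$: since $A\updot=A^1[-1]$ is concentrated in degree $1$, $\varphi_A$ being a quasi-isomorphism forces $\varphi_A\colon A^1\to A^{\prime 1}$ to be an isomorphism, while $\sH^0(B\updot)=0=\sH^0(B\upprimedot)$ together with $\varphi_B$ being a quasi-isomorphism forces $\sH^1(\varphi_B)\colon\sH^1(B\updot)\to\sH^1(B\upprimedot)$ to be an isomorphism; hence the left-hand arrow is an isomorphism of complexes, in particular a quasi-isomorphism.

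All of this is essentially routine bookkeeping; the only point needing a little attention is the functoriality of $\varepsilon$ in part (a), since $\varepsilon$ was introduced via an auxiliary quasi-isomorphism onto a truncation rather than by a closed formula, but this is settled either by functoriality of the canonical truncation map or by the explicit description of $\varepsilon$. I do not anticipate any genuine obstacle.
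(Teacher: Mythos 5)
Your proposal is correct and follows essentially the same route as the paper, whose proof is just the two-line observation that (a) follows from Lemma~\ref{lem:cone-hom} and Construction~\ref{con:H0-B} and (b) from Lemma~\ref{lem:quasi-isom}; you merely fill in the routine details (functoriality of $\varepsilon$ via its explicit formulas, and the identification of the three vertical arrows in (b)), all of which check out.
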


\begin{proof}
The assertion (a) follows from Lemma \ref{lem:cone-hom} and Construction \ref{con:H0-B},
and the assertion (b) follows from Lemma \ref{lem:quasi-isom}.
\end{proof}

Now we apply Corollary \ref{cor:H0-B} to the exact sequence of Lemma \ref{lem:upicg-decomp}.

\begin{theorem}\label{thm:UG(Xbar)=0}
  Let $X$ and $G$ be as in \S\ref{subsec:G}.
Assume that the map $\Pic_G(\Xbar) \to\Pic(\Xbar)$ is surjective and that $U_G(\Xbar)=0$.
Then there is a canonical isomorphism  in the derived category
\begin{equation}\label{eq:ZalgOX-UPicG-UPicX}
    \left[ \Zalg^1(\Gbar, \OXbar) \labelto{\sigma} \Pic_G(\Xbar) \right\rangle \labelto{\sim} \UPic(\Xbar).
\end{equation}
This isomorphism is functorial in $X$ and $G$.
\end{theorem}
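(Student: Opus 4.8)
The plan is to apply Corollary~\ref{cor:H0-B} to the short exact sequence of complexes furnished by Lemma~\ref{lem:upicg-decomp}. Since we are assuming that $\Pic_G(\Xbar)\to\Pic(\Xbar)$ is surjective, that lemma provides a short exact sequence, functorial in $X$ and $G$,
\[
0\to \Zalg^1(\Gbar,\OXbar)[-1]\labelto{\mu}\UPic_G(\Xbar)\labelto{\nu}\UPic(\Xbar)\to 0 .
\]
First I would observe that this is exactly an exact sequence of the shape (S) appearing in Construction~\ref{con:H0-B}: we have $\Zalg^1(\Gbar,\OXbar)[-1]=[0\to \Zalg^1(\Gbar,\OXbar)\rangle$, the complex $\UPic_G(\Xbar)$ is concentrated in degrees $0$ and $1$, and so is $\UPic(\Xbar)=[\KXbar/\kbar^\times\to\Div(\Xbar)\rangle$; thus in the notation of Construction~\ref{con:H0-B} we take $A^1=\Zalg^1(\Gbar,\OXbar)$, $B\updot=\UPic_G(\Xbar)$, and $C\updot=\UPic(\Xbar)$.

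The next step is to check the hypothesis $(\sH^0)$, i.e. that $\sH^0(\UPic_G(\Xbar))=0$. By the computation of $\sH^0(\UPic_G(\Xbar))$ recorded just after Definition~\ref{def:UPicG} we have $\sH^0(\UPic_G(\Xbar))=U_G(\Xbar)$, and this vanishes by hypothesis. Moreover, Corollary~\ref{cor:coh-upicg} gives a canonical isomorphism of Galois modules $\sH^1(\UPic_G(\Xbar))\cong\Pic_G(\Xbar)$. Feeding these two facts into Corollary~\ref{cor:H0-B}(a), the quasi-isomorphism $\varepsilon$ of Construction~\ref{con:H0-B} and the quasi-isomorphism $\lambda$ of Lemma~\ref{lem:cone-hom} combine into a canonical isomorphism in the derived category
\[
[\Zalg^1(\Gbar,\OXbar)\labelto{\sigma}\Pic_G(\Xbar)\rangle\labelto{\sim}\UPic(\Xbar),
\]
where $\sigma$ is the map of Construction~\ref{con:H0-B}; unwinding the formula $\sigma(c)=-\mu^1(c)+\im d_B$ together with the description of $\mu$ in Lemma~\ref{lem:upicg-decomp} and of the isomorphism $\Halg^1(\Gbar,\KDiv(\Xbar))\cong\Pic_G(\Xbar)$ in Theorem~\ref{thm:PicG-H1}, one identifies $\sigma$ with the natural homomorphism $\Zalg^1(\Gbar,\OXbar)\to\Halg^1(\Gbar,\OXbar)\to\Pic_G(\Xbar)$ (the leftmost arrow of the exact sequence in Lemma~\ref{cor:exact-seq-picg-pic-g-conn-trivpic}).

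It remains to address functoriality, which follows from Corollary~\ref{cor:H0-B}(b): a homomorphism $G'\to G$ of $k$-groups, or a dominant $G$-equivariant morphism $X'\to X$, induces a morphism between the corresponding exact sequences of Lemma~\ref{lem:upicg-decomp}, so the displayed isomorphism is compatible with the induced pull-back maps; for a general, possibly non-dominant, $G$-equivariant morphism in the $X$-variable one first replaces the complexes by the local modifications of \S\ref{subsec:functoriality}, which yield a quasi-isomorphism of exact sequences, and then applies Corollary~\ref{cor:H0-B}(b). I do not anticipate a real obstacle here: all the homological input has already been put in place, and the only points needing attention are the formal matching of the sequence of Lemma~\ref{lem:upicg-decomp} with an instance of (S), the identification of $\sigma$, and the routine bookkeeping for functoriality under non-dominant morphisms.
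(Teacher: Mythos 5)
Your proposal is correct and follows essentially the same route as the paper: the paper's proof is exactly the application of Corollary~\ref{cor:H0-B}(a) to the exact sequence of Lemma~\ref{lem:upicg-decomp} (with $(\sH^0)$ supplied by the hypothesis $U_G(\Xbar)=0$ and $\sH^1$ identified via Corollary~\ref{cor:coh-upicg}), with functoriality for non-dominant morphisms handled by the local modifications of \S\ref{subsec:functoriality}. Your identification of $\sigma$ also agrees with the paper's description of $\sigma(c)$ as the class of $\sO_{\Xbar}$ with the $G$-linearization given by $c$.
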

We specify the map $\sigma$ and the isomorphism in the derived category.
The map $\sigma$ takes a cocycle $c\in \Zalg^1(\Gbar, \OXbar)$ to the class of the trivial invertible sheaf $\sO_\Xbar$ on $\Xbar$
with the $G$-linearization given by $c$, see Lemma \ref{lem:G-linearizations}.
The isomorphism  \eqref{eq:ZalgOX-UPicG-UPicX} is given by the commutative diagram
\begin{equation}\label{eq:diagram-isomorphism}
 \xymatrix{
{\Zalg^1(\Gbar, \OXbar)}\ar[d]^\sigma  &{\Zalg^1(\Gbar, \OXbar)}\oplus\sK(\Xbar)^\times/\kbar^\times\ar[d]^\psi\ar[l]\ar[r]
                                                            &\sK(\Xbar)^\times/\kbar^\times\ar[d]^{\divisor}\\
\Pic_G(\Xbar)                          &\UPic_G(\Xbar)^1\ar[l]\ar[r]                  &\Div(\Xbar)
}
\end{equation}
where the arrow $\psi$ is given by
$$
\psi(c,[f])=(c\cdot d^0_\sK(f),\divisor(f))\in \UPic_G(\Xbar)^1\subset  \Zalg^1(\Gbar,\KXbar)\oplus\Div(\Xbar),
$$
and all the unlabeled arrows are the obvious ones.

\begin{proof}
  The isomorphism of the theorem is the isomorphism of Corollary \ref{cor:H0-B}(a)
applied to the exact sequence of Lemma~\ref{lem:upicg-decomp}.
  Functoriality in the case of a dominant morphism $f\colon X'\to X$ is evident.
In the case of a non-dominant $G$-morphism $f\colon X'\to X$
we use local modifications as in \S\ref{subsec:functoriality}.
\end{proof}

\section{Extended Picard complex of a homogeneous space}
\label{sec:hom-space}

Let $X$ be a homogeneous space under a connected $k$-group $G$.
In general $X$ may have no $k$-rational points,
hence not be of the form $G/H$.
The results of Section~\ref{sec:rel-upic_g-upic} give a nice description
of $\UPic(\Xbar)$ as long as $\Pic(\Gbar) = 0$.
Fortunately, the
latter assumption does not give any serious loss of generality
by virtue of  Lemma \ref{lem:epimorphism} below.

First let $G$ be a connected $k$-group acting on a geometrically integral  $k$-variety $X$.
The character group $\Chi(\Gbar)$ canonically embeds into $\Zalg^1(\Gbar, \OXbar)$.
We show that $\Zalg^1(\Gbar, \OXbar)=\Chi(\Gbar)$ using Rosenlicht's lemma.

\begin{lemma}\label{cor:Rosenlicht}
Let $X$ be a geometrically integral variety over an arbitrary field $k$,
and $G$ be a connected $k$-group.
Then every $u\in\sO(\Xbar\times_\kbar \Gbar)^\times$
can be uniquely written in the form
$u(x,g)=v(x)\chi(g)$, where $v\in\OXbar$ and $\chi\in\XX(\Gbar)$.\qed
\end{lemma}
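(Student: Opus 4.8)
The plan is to reduce to Rosenlicht's lemma on invertible regular functions on a product involving a connected group. Recall Rosenlicht's classical result: if $Y$ is a geometrically integral variety and $G$ is a connected $k$-group, both over $k$, then every invertible regular function $u$ on $\Ybar \times_\kbar \Gbar$ factors uniquely as $u(y,g) = v(y)\,\chi(g)$ with $v \in \sO(\Ybar)^\times$ and $\chi \in \XX(\Gbar)$, once one normalizes by the value at the identity $e \in G(\kbar)$; more precisely, setting $v(y) = u(y,e)$ and $\chi(g) = u(e,e)^{-1}\cdot$ (value of $u(y_0,g)/u(y_0,e)$ for any fixed $y_0$), one checks that this $\chi$ is independent of the chosen point and is a character. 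I would apply this with $Y = X$.

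The key steps, in order, are as follows. First I would invoke Rosenlicht's lemma (e.g. in the form of \cite[Theorem 3]{Rosenlicht1961}, or as stated in \cite{KKV} or \cite{Fossum-Iversen}): since $\Xbar$ is geometrically integral and $\Gbar$ is connected, any $u \in \sO(\Xbar \times_\kbar \Gbar)^\times$ has the shape $u(x,g) = v(x)w(g)$ with $v \in \sO(\Xbar)^\times$ and $w \in \sO(\Gbar)^\times$. Second, I would show $w$ is a character of $\Gbar$: after normalizing so that $w(e) = 1$ (absorbing the constant $w(e)$ into $v$), the function $w$ is an invertible regular function on the connected group $\Gbar$ with $w(e)=1$, and one applies Rosenlicht's lemma \emph{again}, this time to $u$ viewed on $\Gbar \times_\kbar \Gbar$ via the multiplication map, to deduce $w(g_1 g_2) = w(g_1) w(g_2)$; hence $w = \chi \in \XX(\Gbar)$. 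Third, uniqueness: if $v(x)\chi(g) = v'(x)\chi'(g)$ identically, then evaluating at $g = e$ gives $v = v'$ (since $\chi(e) = \chi'(e) = 1$ after the normalization), and then $\chi = \chi'$. This also pins down the normalization: one requires $v$ to carry the ``constant part,'' i.e. the decomposition is unique once we demand $\chi(e)=1$, which is the standard convention for characters.

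The main obstacle — really the only nontrivial point — is establishing that the $\Gbar$-factor $w$ is multiplicative, i.e. genuinely a character rather than an arbitrary unit on $\Gbar$. This is exactly the content of Rosenlicht's lemma and is where connectedness of $G$ is used (for a disconnected group $\sO(\Gbar)^\times/\kbar^\times$ can be strictly larger than $\XX(\Gbar)$). Everything else is bookkeeping: the factorization $u(x,g)=v(x)w(g)$ is the statement of Rosenlicht's lemma applied directly, and uniqueness is immediate from evaluation at the identity. So in practice the proof is a one-line citation of Rosenlicht plus the remark that an invertible regular function on a connected group that is normalized at $e$ and comes from such a factorization must be a homomorphism.
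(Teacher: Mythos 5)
Your proposal is correct and matches the paper's proof, which is likewise a one-line reduction to Rosenlicht's lemma (cited there in the form of \cite[Lemme 6.5]{Sansuc}); the extra details you supply (multiplicativity of the $\Gbar$-factor via a second application of Rosenlicht on $\Gbar\times\Gbar$, and uniqueness by evaluation at $g=e$) are exactly the standard unpacking of that citation.
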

\begin{proof}
The lemma follows easily from Rosenlicht's lemma, see e.g. \cite[Lemme 6.5]{Sansuc}.
\end{proof}

\begin{proposition}\label{cor:cocycle}
Let $G$ be a connected $k$-group acting on a geometrically integral
$k$-variety $X$.
Then $\Zalg^1(\Gbar, \OXbar)=\Chi(\Gbar)$.
\end{proposition}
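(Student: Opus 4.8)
The plan is to prove the inclusion $\Chi(\Gbar)\subseteq\Zalg^1(\Gbar,\OXbar)$ and its reverse separately. For the first inclusion, given $\chi\in\Chi(\Gbar)$, define $c_g(x)=\chi(g)$ for all $x\in\Xbar$, $g\in\Gbar$; this is an invertible regular function on $\Xbar\times\Gbar$, and the cocycle identity \eqref{eq:cocycle}, namely $c_{g_1g_2}(x)=c_{g_1}(x)c_{g_2}(xg_1)$, reduces to $\chi(g_1g_2)=\chi(g_1)\chi(g_2)$, which holds since $\chi$ is a character. So $\Chi(\Gbar)$ embeds into $\Zalg^1(\Gbar,\OXbar)$ (as already asserted in the text preceding the proposition).

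For the reverse inclusion, I would take an arbitrary $c\in\Zalg^1(\Gbar,\OXbar)\subseteq\sO(\Xbar\times\Gbar)^\times$ and apply Lemma~\ref{cor:Rosenlicht} (Rosenlicht's lemma) to write uniquely $c(x,g)=v(x)\chi(g)$ with $v\in\OXbar$ and $\chi\in\Chi(\Gbar)$. The goal is to show $v$ is constant, i.e. $v\in\kbar^\times$, so that $c$ is (up to a scalar, which we can absorb) just the character $\chi$; more precisely, to show $c$ coincides with a character as an element of $\Zalg^1(\Gbar,\OXbar)$. Substituting $c(x,g)=v(x)\chi(g)$ into the cocycle condition \eqref{eq:cocycle} gives
$$
v(x)\chi(g_1g_2)=v(x)\chi(g_1)\cdot v(xg_1)\chi(g_2).
$$
Since $\chi$ is a character, $\chi(g_1g_2)=\chi(g_1)\chi(g_2)$, so after cancelling the nonzero factors $\chi(g_1)\chi(g_2)$ and dividing by $v(x)$ we obtain $v(xg_1)=1$ identically in $x$ and $g_1$. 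In particular, evaluating at $g_1=e$ (or just noting that the $G$-orbit of any point and the translates cover a dense subset) gives $v\equiv 1$. Hence $c(x,g)=\chi(g)$, so $c$ lies in the image of $\Chi(\Gbar)$.

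I expect the only mild subtlety to be bookkeeping about the scalar $\kbar^\times$: strictly one should observe that the natural embedding $\Chi(\Gbar)\hookrightarrow\Zalg^1(\Gbar,\OXbar)$ sends $\chi$ to the cochain $(x,g)\mapsto\chi(g)$, and the computation above shows every cocycle is exactly of this form with $v=1$ — no scalar ambiguity arises because $v$ is forced to be $1$, not merely constant. (Alternatively, if one allows $v$ constant, then $c(x,g)=v\cdot\chi(g)=\chi(g)$ as a cochain after absorbing $v$ into nothing, but the computation already pins $v=1$.) So there is no real obstacle here; the proof is a direct application of Rosenlicht's lemma combined with unwinding the cocycle identity. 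The one point to state carefully is that after the cancellation the resulting identity $v(xg_1)=1$ holds for all $(x,g_1)$ in $\Xbar(\kbar)\times\Gbar(\kbar)$, hence $v=1$ in $\OXbar$.
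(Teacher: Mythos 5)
Your proof is correct and follows essentially the same route as the paper: both apply Rosenlicht's lemma (Lemma~\ref{cor:Rosenlicht}) to write $c(x,g)=v(x)\chi(g)$, substitute into the cocycle identity \eqref{eq:cocycle}, cancel to get $v(xg_1)=1$, and evaluate at $g_1=1$ to conclude $v\equiv 1$. The extra remarks about the easy inclusion and the absence of scalar ambiguity are fine but not needed beyond what the paper already records.
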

\begin{proof}
Let $c\in\ZGOXbar$.
By Lemma \ref{cor:Rosenlicht} we may write
$$
c_g(x)=v(x)\chi(g) \text{ for some } v\in\OXbar,\ \chi\in\XX(\Gbar).
$$
Writing the cocycle condition \eqref{eq:cocycle} of \S \ref{subsec:calg-defs},
we obtain
$$
v(x)\chi(g_1 g_2)=v(x)\chi(g_1) v(xg_1)\chi(g_2),
$$
whence $v(xg_1)=1$.
Substituting $g_1=1$, we obtain $v(x)=1$.
Thus $c_g(x)=\chi(g)$ and $c=\chi$.
\end{proof}

Now let $X$ be a homogeneous space under a connected $k$-group $G$.
We compute $U_G(\Xbar)$ and (following Popov) $\Pic_G(\Xbar)$.

\begin{lemma}\label{lem:coh-upicg-homog}
  Let $X$ be a homogeneous space under a connected $k$-group $G$. Then
$U_G(\Xbar) = 0$.
\end{lemma}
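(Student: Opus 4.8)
The plan is to show that every $G$-invariant invertible regular function on $\Xbar$ is constant, i.e. $(\sO(\Xbar)^\times)^G = \kbar^\times$, whence $U_G(\Xbar) = (\sO(\Xbar)^\times)^G/\kbar^\times = 0$ by the identification of $\sH^0(\UPic_G(\Xbar))$ established after Definition~\ref{def:UPicG}. Since $X$ is a homogeneous space under a connected $k$-group $G$, so is $\Xbar$ under $\Gbar$; thus I may assume without loss of generality that $k$ is algebraically closed, and then $\Xbar = G\xb$ for any geometric point $\xb$, so the orbit map $a\colon G \to X$, $g \mapsto \xb g$, is a dominant (in fact surjective) morphism of varieties.

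The key step is to pull back along $a$. An invariant function $u \in (\sO(X)^\times)^G$ pulls back to $a^*u \in \sO(G)^\times$. By Rosenlicht's lemma (in the form of Lemma~\ref{cor:Rosenlicht} applied with the one-point variety, or simply the classical statement that $\sO(G)^\times = \kbar^\times \cdot \XX(G)$ for connected $G$), we may write $a^*u = \kappa\cdot\chi$ for some constant $\kappa \in \kbar^\times$ and some character $\chi \in \XX(\Gbar)$. The $G$-invariance of $u$ means $u(\xb g) = u(\xb)$ for all $g$, i.e. $a^*u$ is a constant function on $G$; but a character $\chi$ is constant only if $\chi = 1$. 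Hence $a^*u$ is constant, and since $a$ is dominant and $X$ is integral, the pullback $a^* \colon \sK(X) \to \sK(G)$ is injective, so $u$ itself is constant. Therefore $(\sO(\Xbar)^\times)^G = \kbar^\times$ and $U_G(\Xbar) = 0$.

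The main (minor) obstacle is being careful about base points and the non-dominance subtleties: $X$ need not have a $k$-point, so one genuinely works over $\kbar$ and uses $\xb \in X(\kbar)$; and one should note the orbit map $a\colon \Gbar \to \Xbar$ is automatically surjective since $\Xbar$ is a single orbit (here I use that the action of a connected group on a homogeneous space in characteristic $0$ is transitive on geometric points), so dominance — and hence injectivity of $a^*$ on function fields — is immediate. Everything else is a direct application of Rosenlicht's lemma and the definition of invariant function. In fact the argument shows slightly more: $(\sO(\Xbar))^G = \kbar$, since any $G$-invariant regular function on $\Xbar$ pulls back to a regular function on $\Gbar$ that is constant on $\Gbar$-orbits of the right translation action, hence globally constant. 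This stronger statement is not needed here but makes the vanishing transparent.
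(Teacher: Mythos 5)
Your argument is correct and is essentially the paper's proof: since $\Gbar$ acts transitively on $\Xbar$, a $G$-invariant function satisfies $u(\xb g)=u(\xb)$ for all $g$, hence is constant, so $(\sO(\Xbar)^\times)^G=\kbar^\times$. The detour through Rosenlicht's lemma is harmless but redundant --- as you yourself observe, invariance already forces $a^*u$ to be the constant $u(\xb)$, so the decomposition $a^*u=\kappa\cdot\chi$ is never actually needed.
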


\begin{proof}
  By definition, $U_G(\Xbar) = (\sO(\Xbar)^\times)^G/\kbar^\times$.
Since $G$ acts transitively, the only $G$-invariant functions are the constant
functions, and the lemma follows.
\end{proof}

\begin{subsec}\label{subsec:characters-homogeneous-spaces}
 Let $X$ be a homogeneous space under a connected $k$-group $G$,
and let $\xbar\in X(\kbar)$ be a $\kbar$-point.
Let $\Hbar\subset \Gbar$ be the stabilizer of $\xbar$ in $\Gbar$.
We define a homomorphism $\pi_\xbar\colon \Pic_G(\Xbar)\to\XX(\Hbar)$ as follows.
Let $(\sL,\beta)$ be an invertible sheaf  on $\Xbar$ with a $\Gbar$-linearization.
Consider the embedding $i\colon\Hbar \into \Xbar\times\Gbar,\ h\mapsto (\xbar,h)$.
Then $w\circ i=p_X\circ i\colon \Hbar\into \Xbar$,
hence
$$
(w\circ i)^*\sL = (p_X\circ i)^*\sL.
$$
We see that the $G$-linearization $\beta\colon w^*\sL\to p_X^*\sL$
gives an automorphism of the (trivial) invertible sheaf $(p_X\circ i)^*\sL$ on $\Hbar$,
and this automorphism is given by an invertible regular function $\chi$ on $\Hbar$.
The cocycle condition \eqref{eq:cocycle-beta}  of Definition \ref{def:lin-beta}
readily gives $\chi(h_1h_2)=\chi(h_1) \chi(h_2)$,
hence $\chi$ is a character of $\Hbar$.
We obtain a homomorphism $\pi_\xbar\colon \Pic_G(\Xbar)\to\XX(\Hbar)$, $\pi_\xbar([L,\beta])=\chi$.

Let $\xbar'\in X(\kbar)$ be another point and $\Hbar'$ its stabilizer.
We may write $\xbar'=\xbar g$ for some $g\in G(\kbar)$.
Then $\Hbar'=g^{-1} \Hbar g$, and we obtain an isomorphism
$\Hbar\to\Hbar'\colon h\mapsto g^{-1}hg$.
The induced isomorphism $g_*\colon \XX(\Hbar')\to\XX(\Hbar)$ does not depend on the choice of $g$,
and so we obtain a canonical identification of $\XX(\Hbar)$ with $\XX(\Hbar')$.
By Lemma \ref{lem:base-point} below, under this identification
the homomorphism $\pi_\xbar\colon \Pic_G(\Xbar)\to\XX(\Hbar)$ does not depend on $\xbar$.
\end{subsec}

\begin{lemma}\label{lem:base-point}
Let $G$, $X$, $\xbar$, $\xbar'$, and $g$ be as above, in particular, $\xbar'=\xbar g$.
Then for  $h\in\Hbar(\kbar)$ and $\beta\in \Pic_G(\Xbar)$ we have
$\pi_{\xbar'}(\beta)(g^{-1} h g)=\pi_\xbar(\beta)(h)$.
\end{lemma}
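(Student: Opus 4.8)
The plan is to unwind the definition of $\pi_\xbar$ from \S\ref{subsec:characters-homogeneous-spaces} at both basepoints $\xbar$ and $\xbar'=\xbar g$, and to compare the two resulting characters directly using the cocycle condition \eqref{eq:cocycle-beta}. First I would fix an invertible $G$-sheaf $(\sL,\beta)$ representing the class $\beta\in\Pic_G(\Xbar)$ and recall that $\pi_\xbar(\beta)=\chi$, where $\chi\in\XX(\Hbar)$ is the invertible regular function on $\Hbar$ obtained by restricting $\beta$ along the embedding $i_\xbar\colon\Hbar\into\Xbar\times\Gbar$, $h\mapsto(\xbar,h)$; likewise $\pi_{\xbar'}(\beta)=\chi'\in\XX(\Hbar')$ comes from $i_{\xbar'}\colon\Hbar'\into\Xbar\times\Gbar$, $h'\mapsto(\xbar',h')=(\xbar g,h')$. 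The goal is to show $\chi'(g^{-1}hg)=\chi(h)$ for $h\in\Hbar(\kbar)$.

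The key computation is with the functions $\beta_g\colon\sL\to\sL$ (pullback of $\beta$ along $x\mapsto(x,g)$), for which the cocycle condition reads $\beta_{g_1g_2}=\beta_{g_1}\circ w^*_{g_1}(\beta_{g_2})$. For $h\in\Hbar(\kbar)$, write $h'=g^{-1}hg\in\Hbar'(\kbar)$, so that $hg=gh'$. Applying the cocycle condition twice, I would compute $\beta_{hg}=\beta_h\circ w_h^*(\beta_g)$ and $\beta_{gh'}=\beta_g\circ w_g^*(\beta_{h'})$; since $hg=gh'$ these two are equal. Now $\beta_g$ is an isomorphism $w_g^*\sL\to\sL$ on $\Xbar$, and evaluating the identity $\beta_h\circ w_h^*(\beta_g)=\beta_g\circ w_g^*(\beta_{h'})$ at the point $\xbar$ (noting $\xbar h=\xbar$ since $h\in\Hbar$, and $\xbar g=\xbar'$, and $\xbar' h'=\xbar'$ since $h'\in\Hbar'$) gives, on the fibres of $\sL$ at $\xbar$ and at $\xbar'$, the scalar identity $\chi(h)\cdot(\beta_g)_\xbar = (\beta_g)_\xbar\cdot\chi'(h')$ (the value $(\beta_h)_\xbar$ is exactly $\chi(h)$ by definition of $\pi_\xbar$, and $(\beta_{h'})_{\xbar'}=\chi'(h')$ by definition of $\pi_{\xbar'}$, while the two occurrences of $\beta_g$ are the same isomorphism on the stalk at $\xbar$ resp. $\xbar'$). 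Cancelling the nonzero scalar $(\beta_g)_\xbar$ yields $\chi(h)=\chi'(h')=\chi'(g^{-1}hg)$, which is precisely the assertion.

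The step I expect to be the main obstacle is the careful bookkeeping of which stalk each map lives on: $\beta_g$ goes from the stalk of $w_g^*\sL$ at $\xbar$ — which is the stalk of $\sL$ at $\xbar g=\xbar'$ — to the stalk of $\sL$ at $\xbar$, so the identity $\beta_h\circ w_h^*(\beta_g)=\beta_g\circ w_g^*(\beta_{h'})$ must be read as an equality of maps $\sL_{\xbar'}\to\sL_{\xbar}$ after restriction, and one must check that $w_h^*(\beta_g)$ restricted at $\xbar$ is again $(\beta_g)_\xbar$ up to the identification $\xbar h=\xbar$, and similarly $w_g^*(\beta_{h'})$ at $\xbar$ is $(\beta_{h'})_{\xbar'}=\chi'(h')$ times the identity. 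Once all identifications of one-dimensional fibres are made explicit (everything takes place in $\kbar^\times$), the argument reduces to the scalar cancellation above; I would present it either in this stalkwise language or, equivalently and perhaps more cleanly, by restricting the whole identity of isomorphisms of invertible sheaves along the map $\Spec\kbar\to\Xbar\times\Gbar\times\Gbar$ corresponding to $(\xbar,h,g)=(\xbar,g,h')$ and tracking the induced scalars.
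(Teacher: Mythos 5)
Your proof is correct and follows exactly the route the paper indicates for its omitted proof: the whole argument rests on the cocycle condition \eqref{eq:cocycle-beta} applied to the identity $hg=gh'$ with $h'=g^{-1}hg$, followed by evaluation at $\xbar$ and cancellation of the invertible scalar $(\beta_g)_{\xbar}\colon\sL_{\xbar'}\to\sL_{\xbar}$. Your stalkwise bookkeeping (in particular $(w_h^*\beta_g)_{\xbar}=(\beta_g)_{\xbar h}=(\beta_g)_{\xbar}$ and $(w_g^*\beta_{h'})_{\xbar}=(\beta_{h'})_{\xbar'}=\chi'(h')$) is accurate, so nothing is missing.
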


\begin{proof}
Omitted (it uses only the cocycle condition \eqref{eq:cocycle-beta} of Definition \ref{def:lin-beta}).
\end{proof}

\begin{proposition}[{\cite[Thm. 4]{Popov}}]\label{prop:picg-chi-h}
  Let $X$ be a homogeneous space under a connected $k$-group $G$,
and let $\xbar\in X(\kbar)$ be a $\kbar$-point.
Let $\Hbar \subset \Gbar$ be the stabilizer of $\xbar$.
Then the canonical homomorphism  of abelian groups
$\pi_\xbar\colon\Pic_G(\Xbar)\to\XX(\Hbar)$ of \S\ref{subsec:characters-homogeneous-spaces}
is an isomorphism.
\end{proposition}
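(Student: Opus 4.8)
The plan is to construct an explicit inverse to $\pi_{\xbar}$, following Popov. Fix the base point $\xbar \in X(\kbar)$ with stabilizer $\Hbar$, so that we may identify $\Xbar = \Gbar/\Hbar$ via $g\Hbar \mapsto \xbar g$ (here I am being slightly careless about left/right conventions, which should be straightened out once and for all at the start). Given a character $\chi \in \XX(\Hbar)$, I want to produce a $\Gbar$-equivariant invertible sheaf $(\sL_\chi, \beta_\chi)$ on $\Xbar$ whose associated character under $\pi_{\xbar}$ is $\chi$. The natural candidate is the sheaf of sections of the line bundle $\Gbar \times^{\Hbar} \Aa^1_\chi \to \Gbar/\Hbar$ associated to the one-dimensional representation of $\Hbar$ given by $\chi$; concretely, $\sL_\chi$ is the subsheaf of $(\pi_*\sO_{\Gbar})$-type functions $f$ on $\Gbar$ with $f(gh) = \chi(h)^{-1} f(g)$, and the $\Gbar$-action by left translation furnishes the linearization $\beta_\chi$. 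One then checks directly that $\pi_{\xbar}([\sL_\chi,\beta_\chi]) = \chi$, essentially by unwinding the definition of $\pi_{\xbar}$ in \S\ref{subsec:characters-homogeneous-spaces}: the automorphism of the fibre over $\xbar$ induced by $h \in \Hbar$ is multiplication by $\chi(h)$. This shows $\pi_{\xbar}$ is surjective, and gives a candidate section $\chi \mapsto [\sL_\chi,\beta_\chi]$.

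For injectivity, suppose $(\sL,\beta)$ is a $\Gbar$-linearized invertible sheaf with $\pi_{\xbar}([\sL,\beta]) = 0$, i.e.\ the linearization restricted to the fibre over $\xbar$ is trivial. Because $\Gbar$ acts transitively on $\Xbar$ and $\Pic(\Gbar) = 0$ — wait, one does not even need that here; rather the point is that a homogeneous space of a connected group has $\Pic(\Xbar)$ controlled, but more simply: the underlying sheaf $\sL$, once we know the linearization is trivial over one point, must itself be trivial. Indeed, the $\Gbar$-linearization combined with transitivity forces $\sL$ to be $\Gbar$-equivariantly a pullback, and triviality of $\beta$ over $\xbar$ propagates over all of $\Xbar$ by equivariance, so $[\sL,\beta] = 0$ in $\Pic_G(\Xbar)$. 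More carefully, one uses the short exact sequence
\begin{equation*}
0 \to \Halg^1(\Gbar,\OXbar) \to \Pic_G(\Xbar) \to \Pic(\Xbar)
\end{equation*}
of Lemma \ref{cor:exact-seq-picg-pic-g-conn-trivpic} together with $\Halg^1(\Gbar,\OXbar) = 0$: by Proposition \ref{cor:cocycle} we have $\Zalg^1(\Gbar,\OXbar) = \XX(\Gbar)$, which consists of cocycles of the form $c_g(x) = \chi(g)$, and among these the coboundaries $d^0$ of constants are trivial, so $\Halg^1(\Gbar,\OXbar)$ is a quotient of $\XX(\Gbar)$; actually one must check the coboundary map and see it is trivial, giving $\Halg^1(\Gbar,\OXbar) = \XX(\Gbar)/(\text{image of } \OXbar)$. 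On a homogeneous space $\OXbar = \kbar^\times$, so the image is trivial and $\Halg^1(\Gbar,\OXbar) = \XX(\Gbar)$ — hmm, this is not zero in general. So the cleaner route is genuinely the explicit construction above rather than a dimension count.

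So the core of the proof is: (i) verify $\chi \mapsto [\sL_\chi, \beta_\chi]$ is a well-defined group homomorphism $\XX(\Hbar) \to \Pic_G(\Xbar)$ — additivity in $\chi$ corresponds to tensor product of the associated bundles; (ii) verify $\pi_{\xbar} \circ (\chi \mapsto [\sL_\chi,\beta_\chi]) = \id_{\XX(\Hbar)}$, which is the fibre-over-$\xbar$ computation; (iii) verify $(\chi \mapsto [\sL_\chi,\beta_\chi]) \circ \pi_{\xbar} = \id_{\Pic_G(\Xbar)}$, which amounts to showing every $\Gbar$-linearized sheaf is isomorphic to some $(\sL_\chi, \beta_\chi)$. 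Step (iii) is the substantive one: given $(\sL,\beta)$, set $\chi = \pi_{\xbar}([\sL,\beta])$ and construct a $\Gbar$-equivariant isomorphism $\sL \isoto \sL_\chi$ by trivializing $\sL$ at the base point and spreading the trivialization out along $\Gbar$-orbits using $\beta$; the cocycle condition \eqref{eq:cocycle-beta} guarantees this is well-defined, i.e.\ independent of the choice of $g$ with $\xbar g$ equal to the target point, precisely because the ambiguity lies in $\Hbar$ and there $\beta$ acts by $\chi$, which is exactly compensated by the definition of $\sL_\chi$.

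The main obstacle I anticipate is making step (iii) rigorous at the level of sheaves rather than just on $\kbar$-points: one needs the "spreading out" construction to produce an honest isomorphism of sheaves $w^*\sL$-type data, not just a fibrewise identification, and this requires either faithfully flat descent along $\Gbar \to \Xbar = \Gbar/\Hbar$ or a direct gluing argument. The descent formulation is probably cleanest: a $\Gbar$-linearized invertible sheaf on $\Gbar/\Hbar$ is the same as an $\Hbar$-equivariant invertible sheaf on the point $\Spec \kbar$ (this is the equivalence underlying $\Pic_G(\Gbar/\Hbar) = \Pic_{\Hbar}(\Spec\kbar) = \XX(\Hbar)$), and once phrased this way the isomorphism is formal. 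But since the paper has set things up in the concrete language of divisors, rational functions, and cocycles $z \in \Zalg^1(\Gbar,\KXbar)$ (Theorem \ref{thm:PicG-H1}), it may be more in keeping to argue via that description: represent $[\sL,\beta]$ by a pair $(z,D) \in Z^1_{\mathrm{alg}}(\Gbar, \KDiv(\Xbar))$, restrict to the fibre over $\xbar$ to read off $\chi$, and then exhibit $(z,D)$ as cohomologous to the standard cocycle representing $\sL_\chi$. Either way, the point to be careful about is the independence-of-choices verification that invokes the cocycle condition on $\beta$, which is exactly what Lemma \ref{lem:base-point} was isolating.
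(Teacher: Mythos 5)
Your construction of the inverse $\chi\mapsto[\sL_\chi,\beta_\chi]$ via the line bundle associated to the one-dimensional representation $\chi$ of $\Hbar$ is exactly the paper's argument: there one forms the total space $\theta(\Hbar)\backslash(\Gbar\times\Gmbar)$ of the associated $\Gmbar$-bundle and invokes Hilbert's Theorem 90 together with transitivity of the $\Gbar$-action to get Zariski-local triviality, which is the one point your sketch glosses over (it is what guarantees that $\sL_\chi$ is an honest invertible sheaf). Apart from your correctly abandoned detour through $\Halg^1(\Gbar,\OXbar)$, the proposal matches the paper's sketch, which is itself no more detailed about the verification that the two maps are mutually inverse.
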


\begin{proof}[Sketch of  proof]
Let $\chi\in\XX(\Hbar)$.
We define an embedding
$$
\theta\colon \Hbar\into\Gbar\times\Gmbar,\quad h\mapsto(h,\chi(h)^{-1}).
$$
Set $\Ybar=\theta(\Hbar)\backslash(\Gbar\times\Gmbar)$,
this is a quotient space of a linear algebraic group by an algebraic subgroup.
The group $\Gbar$ acts on $\Ybar$ on the right.
By Hilbert's Theorem 90 the principal $\Gmbar$-bundle  $\Ybar\to\Xbar$ admits a rational section,
and, since $\Gbar$ acts transitively on $\Xbar$, this bundle is locally trivial in the Zariski topology.
Using  transition functions for  the bundle $\Ybar\to\Xbar$, we define an invertible sheaf $\sL$ on $\Xbar$.
The action of $\Gbar$ on $\Ybar$ defines
a $\Gbar$-linearization $\beta$ of $\sL$.
We set $\pi'(\chi)=[\sL,\beta]\in\Pic_G(\Xbar)$.
We obtain a homomorphism $\pi'\colon \XX(\Hbar)\to \Pic_G(\Xbar)$, which is inverse to $\pi_\xbar$,
hence $\pi_\xbar$ is an isomorphism.
\end{proof}

\begin{remarks}\label{rem:res}
(1) Since the Galois group acts on $\Pic_G(\Xbar)$,
using Popov's isomorphism $\pi_\xbar$ of Proposition \ref{prop:picg-chi-h},
we can  endow $\XX(\Hbar)$ with a canonical structure of a Galois module.
This structure was earlier constructed by hand in \cite{Bor96}.

(2) We have a canonical homomorphism $\XX(\Gbar)\to\Pic_G(\Xbar)$
taking a character $\chi\in\XX(\Gbar)$ to the trivial invertible sheaf $\sL=\sO_\Xbar$
with the $\Gbar$-linearization $\beta_g(x)=\chi(g)\colon \sO_{\Xbar\times \Gbar}\to  \sO_{\Xbar\times \Gbar}$.
Clearly this homomorphism is a morphism of Galois modules.
On the other hand, under  Popov's identification $\Pic_G(\Xbar)=\XX(\Hbar)$
this homomorphism corresponds to the restriction map $\res\colon \XX(\Gbar)\to \XX(\Hbar)$
taking a character $\chi$ of $\Gbar$ to its restriction to $\Hbar$.
We see that the restriction map $\res$ is a morphism of Galois modules.

(3) Let $\xbar'\in X(\kbar)$ and $\Hbar'\subset \Gbar$ be as in \S\ref{subsec:characters-homogeneous-spaces}.
If we identify $\XX(\Hbar)$ and $\XX(\Hbar')$ as in \S\ref{subsec:characters-homogeneous-spaces},
then the map $\res\colon \XX(\Gbar)\to \XX(\Hbar)$
does not depend on the choice of the base point $\xbar$.
\end{remarks}

The following theorem is the main result of this paper.

\begin{theorem}\label{th:upic-hom-space}
  Let $X$ be a homogeneous space under a connected $k$-group $G$ with
   $\Pic(\Gbar) = 0$. Let $\Hbar$ be a geometric stabilizer.
  We have a canonical isomorphism in the derived category of Galois modules
  \begin{equation*}
    \UPic(\Xbar) \iso \left[\Chi(\Gbar) \labelto{\res} \Chi(\Hbar) \right\rangle
  \end{equation*}
 which is functorial in $G$ and $X$.
Here $\XX(\Hbar)$ has the Galois module structure given by the Popov's isomorphism,
and $\res$ is the restriction map.
\end{theorem}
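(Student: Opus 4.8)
\emph{Plan of proof.} The strategy is to assemble the results of Sections~\ref{sec:rel-upic_g-upic} and~\ref{sec:hom-space}: first verify that a homogeneous space $X$ with $\Pic(\Gbar)=0$ satisfies the hypotheses of Theorem~\ref{thm:UG(Xbar)=0}, and then identify the two terms and the differential of the resulting fibre complex with $\Chi(\Gbar)$, $\Chi(\Hbar)$, and $\res$.

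First I would check the hypotheses of Theorem~\ref{thm:UG(Xbar)=0}. Since $\charact k=0$ and $G$ is smooth, the homogeneous space $X$ is smooth, hence normal. By Lemma~\ref{cor:exact-seq-picg-pic-g-conn-trivpic} there is an exact sequence $0\to\Halg^1(\Gbar,\OXbar)\to\Pic_G(\Xbar)\to\Pic(\Xbar)\to\Pic(\Gbar)$, and since $\Pic(\Gbar)=0$ by assumption, the canonical map $\Pic_G(\Xbar)\to\Pic(\Xbar)$ is surjective. By Lemma~\ref{lem:coh-upicg-homog} we have $U_G(\Xbar)=0$. Hence Theorem~\ref{thm:UG(Xbar)=0} applies and yields a canonical isomorphism in the derived category of Galois modules
\[
  \UPic(\Xbar)\iso [\Zalg^1(\Gbar,\OXbar)\labelto{\sigma}\Pic_G(\Xbar)\rangle ,
\]
functorial in $X$ and $G$, where $\sigma$ sends a cocycle $c$ to the class of $\sO_\Xbar$ with the $G$-linearization given by $c$.

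Next I would make the two identifications. By Proposition~\ref{cor:cocycle}, $\Zalg^1(\Gbar,\OXbar)=\Chi(\Gbar)$. By Popov's theorem (Proposition~\ref{prop:picg-chi-h}) the homomorphism $\pi_{\xbar}$ of \S\ref{subsec:characters-homogeneous-spaces} is an isomorphism $\Pic_G(\Xbar)\isoto\Chi(\Hbar)$, which is precisely the one used to put the Galois structure on $\Chi(\Hbar)$ (Remark~\ref{rem:res}(1)). Under these identifications the differential $\sigma$ becomes the restriction map $\res\colon\Chi(\Gbar)\to\Chi(\Hbar)$: this is exactly Remark~\ref{rem:res}(2). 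Both identifications are independent of the chosen base point $\xbar$ by Lemma~\ref{lem:base-point} and Remark~\ref{rem:res}(3). This already gives $\UPic(\Xbar)\iso[\Chi(\Gbar)\to\Chi(\Hbar)\rangle$ as objects of the derived category.

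The last step, functoriality in $(G,X)$, is the part I expect to require the most care. For a fixed $G$ and a $G$-equivariant morphism $f\colon X'\to X$ of homogeneous spaces, note that $f$ is automatically surjective, hence dominant, so the pullbacks of \S\ref{subsec:functoriality} need no local modification and Theorem~\ref{thm:UG(Xbar)=0} already provides compatibility with $f^*$; it then remains to check that the pullback $\Pic_G(\Xbar)\to\Pic_G(\Xbar')$ of $G$-linearized sheaves corresponds under Popov's isomorphism to the restriction $\Chi(\Hbar'')\to\Chi(\Hbar')$, where $\Hbar'$ is the stabilizer of $\xbar'\in X'(\kbar)$ and $\Hbar''\supseteq\Hbar'$ that of $f(\xbar')$; this follows by unwinding the construction of $\pi_{\xbar}$ in \S\ref{subsec:characters-homogeneous-spaces}. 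For functoriality in $G$ one combines the pullback $\UPic_G(\Xbar)\to\UPic_{G'}(\Xbar)$ along a homomorphism $G'\to G$ (with $\Pic(\Gbar')=0$) with the evident compatibility of Proposition~\ref{cor:cocycle} and of the construction of $\pi_{\xbar}$ with that homomorphism. I expect this bookkeeping of base points and changes of group to be the only genuinely delicate point, and that all of it reduces to diagram chases with the explicit formulas recorded in \S\ref{subsec:characters-homogeneous-spaces} and in Theorem~\ref{thm:UG(Xbar)=0}.
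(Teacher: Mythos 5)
Your proposal is correct and follows essentially the same route as the paper: verify $U_G(\Xbar)=0$ (Lemma~\ref{lem:coh-upicg-homog}) and the surjectivity of $\Pic_G(\Xbar)\to\Pic(\Xbar)$ via Lemma~\ref{cor:exact-seq-picg-pic-g-conn-trivpic}, apply Theorem~\ref{thm:UG(Xbar)=0}, and then identify the terms and the differential via Proposition~\ref{cor:cocycle}, Popov's isomorphism (Proposition~\ref{prop:picg-chi-h}), and Remark~\ref{rem:res}(2). Your additional care about the base-point independence and the dominance of equivariant maps between homogeneous spaces only makes explicit what the paper leaves implicit.
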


Note that the complex of Galois modules $[\Chi(\Gbar) \labelto{\res} \Chi(\Hbar)\rangle$
and the isomorphism of Theorem \ref{th:upic-hom-space}
do not depend on the choice of the base point $\xbar$
(up to the canonical identification of \S\ref{subsec:characters-homogeneous-spaces}).
Indeed, $\Chi(\Hbar)$ does not depend on $\xbar$, see  \S\ref{subsec:characters-homogeneous-spaces}.
By Remark \ref{rem:res}(3) neither does the homomorphism $\res\colon \XX(\Gbar)\to \XX(\Hbar)$.
The Galois action on $\Chi(\Hbar)$ is given by Popov's isomorphism, hence does not depend on $\xbar$ either.
One can easily see from the proof of Theorem \ref{th:upic-hom-space} that the isomorphism of this theorem
does not depend on $\xbar$ (we use Lemma \ref{lem:base-point}).

\begin{proof}[Proof of Theorem \ref{th:upic-hom-space}]
By Lemma \ref{lem:coh-upicg-homog} $U_G(\Xbar)=0$.
By Theorem \ref{thm:UG(Xbar)=0}
there is a canonical isomorphism in the derived category of Galois modules
$$
 \left[ \Zalg^1(\Gbar, \OXbar)\to \Pic_G(\Xbar) \right\rangle \labelto{\sim} \UPic(\Xbar).
$$
By Proposition \ref{cor:cocycle} $\Zalg^1(\Gbar, \OXbar)=\XX(\Gbar)$.
By Proposition \ref{prop:picg-chi-h} $\Pic_G(\Xbar)\cong\XX(\Hbar)$.
The map $\sigma$ of Theorem \ref{thm:UG(Xbar)=0}
corresponds to the homomorphism $\res\colon[\Chi(\Gbar) \to \Chi(\Hbar) \rangle$,
and the theorem follows.
\end{proof}

\section{Extended Picard complex of a $k$-group}
\label{sec:group}

\begin{notation}\label{notation:G-subquotients}
Let $G$ be a connected $k$-group. Then:
\par $G\uu$ is the unipotent radical of $G$;
\par $G\red=G/G\uu$, it is a reductive group;
\par $G\sss$ is the derived group of $G\red$, it is semisimple.
\par $G\tor=G\red/G\sss$, it is a torus;
\par $G\scon$ is the universal covering of $G\sss$, it is simply connected.
\end{notation}

\begin{remark}\label{rem:Pic(Gbar)=0}
It is easy to see that $\Pic(\Gbar)=\Pic(\Gbar\sss)$.
We have $\Pic(\Gbar\sss)=\Chi(\ker[\Gbar\scon\to\Gbar\sss])$,
see \cite[Theorem 3]{Popov} or \cite[Corollary 4.6]{Fossum-Iversen}.
It follows that $\Pic(\Gbar)=0$ if and only if $G\sss$ is simply connected.
\end{remark}

\begin{subsec}
We have a canonical homomorphism
$$
\rho\colon G\scon\onto G\sss\into G\red
$$
(Deligne's homomorphism), which in general is neither injective nor surjective.
Let $T\subset G\red$ be a maximal torus.
Let $T\scon=\rho^{-1}(T)$ be the corresponding maximal torus of $T\scon$.
By abuse of notation we denote the restriction of $\rho$ to $T\scon$
again by $\rho\colon T\scon\to T$. We have a pullback morphism
$$
\rho^*\colon \XX(\Tbar)\to\XX(\Tbar\scon).
$$
\end{subsec}

The following theorem is the main result of our paper \cite{BvH07}.

\begin{theorem}[{\cite[Thm. 1]{BvH07}}]\label{thm:old-main-result}
Let $G$ be a connected  $k$-group.
Let  $T\subset G\red$ be a maximal torus.
Then there is a canonical isomorphism in the derived category
\begin{equation*}
    \UPic(\Gbar) \isoto \left[ \Chi(\Tbar) \labelto{\rho^*} \Chi(\Tbar\scon) \right\rangle,
  \end{equation*}
which is functorial in the pair $(G,T)$.
\end{theorem}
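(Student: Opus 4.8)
The plan is to deduce the theorem from the Main Theorem~\ref{th:upic-hom-space} by realizing the variety underlying $G$ as a homogeneous space of a suitable auxiliary group with trivial geometric Picard group. First I would reduce to the case that $G$ is reductive: fixing a Levi decomposition $G=G\uu\rtimes L$ (which exists over $k$ since $\charact k=0$) gives $\Gbar\cong\Gbar\red\times\Aa^N$ as $\kbar$-varieties, and since $\sO(\hvar)^\times/\kbar^\times$ and $\Pic$ are invariant under multiplication by affine space, the pullback along the projection $G\to G\red$ is a quasi-isomorphism $\UPic(\Gbar\red)\isoto\UPic(\Gbar)$; as $T$, $T\scon$ and $\rho^*$ are unchanged on passing to $G\red$, I may assume henceforth that $G$ is reductive, with maximal torus $T$, semisimple part $G\sss$, simply connected cover $\rho\colon G\scon\onto G\sss\into G$ (Deligne's homomorphism), and $T\scon=\rho^{-1}(T)$.

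Next I would let $G_1:=G\scon\times T$ act on $X:=G$ on the right by $(u,t)\cdot g=\rho(u)^{-1}gt$. This is a right action of $G_1$, and the orbit of the base point $\xb:=e\in G(k)$ equals $G\sss\cdot T=G$, so $X$ is a homogeneous space under $G_1$. The group $G_1$ is connected reductive and its semisimple part is $G\scon$, which is simply connected, so $\Pic(\Gbar_1)=0$ by Remark~\ref{rem:Pic(Gbar)=0}; thus the Main Theorem~\ref{th:upic-hom-space} applies to $G_1$ acting on $X$. The geometric stabilizer of $\xb$ is
\[
\Nbar=\{(u,t)\in\Gbar\scon\times\Tbar\ :\ \rho(u)^{-1}t=e\}=\{(u,\rho(u))\ :\ u\in\Tbar\scon\},
\]
and $u\mapsto(u,\rho(u))$ is an isomorphism of Galois modules $\Tbar\scon\isoto\Nbar$. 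Hence the Main Theorem yields a canonical isomorphism in the derived category of Galois modules $\UPic(\Gbar)\iso[\Chi(\Gbar_1)\labelto{\res}\Chi(\Nbar)\rangle$.

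It then remains to identify the right-hand side with $[\Chi(\Tbar)\labelto{\rho^*}\Chi(\Tbar\scon)\rangle$. Since $G\scon$ is semisimple it has no nontrivial characters, so restriction to $1\times T$ is an isomorphism $\Chi(\Gbar_1)\isoto\Chi(\Tbar)$, and the isomorphism $\Tbar\scon\isoto\Nbar$ above induces $\Chi(\Nbar)\isoto\Chi(\Tbar\scon)$. Under these identifications $\res$ becomes $\rho^*$: a character $\chi\in\Chi(\Tbar)$ corresponds to the character $(u,t)\mapsto\chi(t)$ of $\Gbar_1$, whose restriction to $\Nbar$ sends $(u,\rho(u))$ to $\chi(\rho(u))=(\rho^*\chi)(u)$. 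This gives the isomorphism $\UPic(\Gbar)\iso[\Chi(\Tbar)\labelto{\rho^*}\Chi(\Tbar\scon)\rangle$ of the theorem. Functoriality in the pair $(G,T)$ follows from the functoriality of the Main Theorem in the acting group and the variety: a morphism $(G,T)\to(G',T')$ induces $G\scon\to G'\scon$ compatibly with the Deligne homomorphisms, hence a homomorphism $G_1\to G_1'$ with respect to which $G\to G'$ is equivariant, and the formations of $\Nbar$, of the two identifications above, and of $\rho^*$ are all compatible with it.

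All the real substance is already contained in the Main Theorem~\ref{th:upic-hom-space} (which itself rests on Theorem~\ref{thm:PicG-H1} and Popov's Proposition~\ref{prop:picg-chi-h}), so I do not expect a serious obstacle here. The point requiring the most care is the reduction to reductive $G$ --- i.e.\ establishing $\UPic(\Gbar)\cong\UPic(\Gbar\red)$ as Galois modules, functorially in $G$ --- together with the routine but slightly tedious bookkeeping needed for functoriality in $(G,T)$; once $G$ has been exhibited as the homogeneous space above, everything else is a direct unwinding of characters.
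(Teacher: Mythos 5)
Your argument is correct, and after the first step (reduction to reductive $G$, which matches the paper's appeal to \cite[Lemma 4.1]{BvH07}, and the shared reliance on Theorem~\ref{th:upic-hom-space}) it takes a genuinely different route from the paper's proof in \S\ref{subsec:second-proof}. The paper chooses an $m$-extension $1\to M\to G'\to G\to 1$, applies the Main Theorem to $G$ viewed as a homogeneous space of $G'$ with stabilizer $\Mbar$ to obtain $\UPic(\Gbar)\iso[\XX(\Gbar')\to\XX(\Mbar)\rangle$ (Theorem~\ref{thm:alternative-old-main-result}), and then passes to $[\XX(\Tbar)\to\XX(\Tbar\scon)\rangle$ by a zig-zag of quasi-isomorphisms through the auxiliary complex $[\XX(\Tbar')\to\XX(\Mbar)\oplus\XX(\Tbar\scon)\rangle$; since an $m$-extension is not unique, it must then invoke Lemma~\ref{lem:for-functoriality} to check independence of the choice of $G'$ and functoriality. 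You instead exhibit $G$ directly as a homogeneous space of $G_1=G\scon\times T$ via the two-sided action $(u,t)\cdot g=\rho(u)^{-1}gt$, whose stabilizer at $e$ is canonically $\Tbar\scon$, so the Main Theorem yields the complex $[\XX(\Tbar)\labelto{\rho^*}\XX(\Tbar\scon)\rangle$ on the nose, with no auxiliary choices, no comparison diagram, and no independence-of-choice argument; the verifications you need (transitivity $G=G\sss\cdot T$, the identification of the stabilizer with $\Tbar\scon$, $\Pic(\Gbar_1)=0$ because $(G_1)\sss=G\scon$ is simply connected, and the translation of $\res$ into $\rho^*$) are all correct. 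What the paper's longer route buys is the intermediate, choice-dependent description $\UPic(\Xbar)\iso[\XX(\Gbar')\to\XX(\Mbar)\rangle$ valid for an \emph{arbitrary} torsor $X$ under $G$, which is of independent use (e.g.\ in \S\ref{subsec-torsor-Pic}); your construction is specific to $X=G$ itself, since a torsor without a rational point does not carry the two-sided action, but for the statement at hand it is shorter and cleaner.
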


At the core of the proof in \cite{BvH07} were concrete but rather
tedious calculations.
Our present results allow us to give a new, more conceptual
proof of  this theorem.

\begin{definition}
An $m$-extension of a connected $k$-group $G$ is a central extension
$$
1\to M\to G'\to G\to 1
$$
where $\Pic(\Gbar')=0$ and $M$ is a $k$-group of multiplicative type.
\end{definition}

\begin{lemma}[well known]\label{lem:epimorphism}
Any connected $k$-group $G$ admits an $m$-extension.
\end{lemma}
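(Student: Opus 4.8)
The plan is to reduce to the case of a connected reductive group and then build the extension by hand out of the simply connected cover of the derived group; the key point throughout is that by Remark~\ref{rem:Pic(Gbar)=0} the condition $\Pic(\Gbar')=0$ depends only on the semisimple part of $G'$. For the reduction, let $N=G\uu$ be the unipotent radical and $p\colon G\to G\red$ the quotient map. Assuming one has produced an $m$-extension $1\to M\to H\to G\red\to 1$ of the reductive group $G\red$ (which one can arrange, by the construction below, with $M$ a torus, whence $H$ and the fibre product $G'$ below are connected), I would take $G'$ to be the fibre product $G\times_{G\red}H$: projection to $G$ gives a central extension $1\to M\to G'\to G\to 1$ with $M$ of multiplicative type, while projection to $H$ realises $G'$ as an extension of the reductive group $H$ by the unipotent group $N$, so that $(\Gbar')\sss=\Hbar\sss$ and hence $\Pic(\Gbar')=\Pic(\Hbar)=0$ (using Remark~\ref{rem:Pic(Gbar)=0}). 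Thus $G'$ is an $m$-extension of $G$, and it remains to treat reductive $G$.

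For reductive $G$, let $\rho\colon G\scon\to G\der$ be the simply connected cover of the derived group, and let $Z=Z(G)^{\circ}$ be the central torus, so that $G=G\der\cdot Z$ and $F:=G\der\cap Z$ is a finite group of multiplicative type contained in $Z(G\der)$. Put $\widetilde F=\rho^{-1}(F)$, a finite group of multiplicative type contained in $Z(G\scon)$. I would choose a $k$-torus $Q_0$ together with a closed embedding $j_0\colon\widetilde F\hookrightarrow Q_0$ (possible since any finite group of multiplicative type embeds into a torus---e.g.\ dualise a surjection of Galois lattices onto $\XX(\widetilde F)$---and one may even take $Q_0$ quasi-trivial), set $Q=Z\times Q_0$, and let $j=(\rho|_{\widetilde F},j_0)\colon\widetilde F\hookrightarrow Q$, a closed embedding with $\mathrm{pr}_Z\circ j=\rho|_{\widetilde F}$. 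Then I would define
\begin{equation*}
G'=(G\scon\times Q)/\widetilde F,
\end{equation*}
where $\widetilde F$ is embedded antidiagonally via $\widetilde f\mapsto(\widetilde f,j(\widetilde f)^{-1})$; since this subgroup lies in $Z(G\scon)\times Q$ it is central, so $G'$ is a connected reductive $k$-group. The homomorphism $G\scon\times Q\to G$, $(g,q)\mapsto\rho(g)\cdot\mathrm{pr}_Z(q)$, is surjective with image $G\der\cdot Z=G$ and kills the antidiagonal $\widetilde F$ (because $\mathrm{pr}_Z\circ j=\rho|_{\widetilde F}$), hence descends to a surjection $G'\to G$.

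Two verifications remain. First, since $j$ is injective we have $(G\scon\times\{1\})\cap\widetilde F=\{1\}$, so the derived group of $G'$ is isomorphic to $G\scon$; thus $(\Gbar')\sss$ is simply connected and $\Pic(\Gbar')=0$ by Remark~\ref{rem:Pic(Gbar)=0}. Second, the kernel $M$ of $G'\to G$ is of multiplicative type: its preimage in $G\scon\times Q$ lies in $\widetilde F\times Q$ (the identity $\rho(g)\cdot\mathrm{pr}_Z(q)=1$ forces $\rho(g)\in Z\cap G\der=F$, hence $g\in\widetilde F$), which is a group of multiplicative type, and $M$ is a quotient of a closed subgroup of it by the central subgroup $\widetilde F$; in fact a short computation identifies $M$ with the torus $Q_0$. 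The one genuinely delicate point is the enlargement of $Z$ to $Q=Z\times Q_0$ in the reductive step: the overlap $\widetilde F$ of $G\scon$ and $Z$ inside $G$ fails to embed into $Z$, because $\rho|_{\widetilde F}$ has kernel $\ker\rho$, and adjoining the auxiliary factor $Q_0$ together with a closed embedding $\widetilde F\hookrightarrow Q_0$ is precisely what forces the derived group of $G'$ to be simply connected while preserving a surjection onto $G$. (Alternatively, this reductive step may be replaced by an appeal to the classical existence of $z$-extensions of reductive $k$-groups, which have simply connected derived group and quasi-trivial torus kernel, hence are $m$-extensions by Remark~\ref{rem:Pic(Gbar)=0}.)
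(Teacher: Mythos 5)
Your proof is correct, but it takes a genuinely different route from the paper's in both of its steps. For the passage from $G$ to $G\red$ the paper invokes Mostow's theorem to split $1\to G\uu\to G\to G\red\to 1$, writes $G\cong G\uu\rtimes G\red$, and then simply sets $G'=G\uu\rtimes G''$ for an $m$-extension $G''\to G\red$; your fibre product $G\times_{G\red}H$ achieves the same reduction without any appeal to a Levi decomposition, at the cost of having to observe (as you implicitly do) that $G\uu$ is the full unipotent radical of $G'$, so that $(G')\red=H$ and hence $(\Gbar')\sss=\Hbar\sss$. In the reductive step the paper is more economical: it takes $G''=G\scon\times Z(G\red)^\circ$ with the multiplication map $(g,z)\mapsto\rho(g)z$ onto $G\red$, whose kernel is already a finite central subgroup of multiplicative type (isomorphic to your $\widetilde F$), and stops there --- no auxiliary torus $Q_0$ is introduced, and the kernel of the resulting $m$-extension is finite rather than a torus. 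Your extra step of embedding $\widetilde F$ into a torus, which makes the kernel a torus and forces $G\scon$ to embed as the derived group of $G'$, is exactly the classical $z$-extension construction; it proves more than the lemma asks for and essentially reproves a weak form of Colliot-Th\'el\`ene's Lemma \ref{lem:CT}, which the paper quotes immediately afterwards as a strictly stronger statement. So your argument is valid and self-contained, buys independence from Mostow's splitting theorem and a torus kernel, but is longer than necessary for the lemma as stated.
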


\begin{proof}
We give a simple proof.
We use Notation \ref{notation:G-subquotients}.
Let $Z(G\red)^\circ$ denote the identity component of the center $Z(G\red)$ of $G\red$;
it is a $k$-torus.
Set $G''=G\scon\times Z(G\red)^\circ$.
We have an epimorphism
$$
\varsigma\colon G''\onto G\red\colon (g,z)\mapsto\rho(g)z \text{\quad for\quad } g\in G\scon, z\in  Z(G\red)^\circ.
$$
Since $k$ is a field of characteristic 0, there exists a splitting  of the extension
$$
 1\to G\uu\to G \to G\red\to 1
$$
(see \cite[Theorem 7.1]{Mostow}).
Such a  splitting defines an action of $G\red$  on $G\uu$,
and we have an isomorphism  $G\isoto G\uu\rtimes G\red$ (Levi decomposition).
The group $G''$ acts on $G\uu$ via $G\red$.
Set $G'=G\uu\rtimes G''$.
We have an epimorphism
$$
{\rm id}\times\varsigma\colon G'\onto G.
$$
Since $(G')\sss=(G'')\sss=G\scon$, we see that $(G')\sss$ is simply connected, hence $\Pic(\Gbar')=0$.
Clearly $M:=\ker[G'\to G]$ is a central, finite subgroup of multiplicative type in $G'$.
\end{proof}

Note that a much stronger assertion than Lemma \ref{lem:epimorphism} is known,
see Lemma \ref{lem:CT} below.
According to Colliot-Th\'el\`ene  \cite[Prop. 2.2]{CT06},
a quasi-trivial $k$-group is an extension of a quasi-trivial $k$-torus
by a simply connected $k$-group.
Recall  that a simply connected $k$-group is an extension
of a simply connected semisimple $k$-group by a (connected) unipotent $k$-group.

\begin{lemma}[Colliot-Th\'el\`ene]\label{lem:CT}
Let $G$ be a connected $k$-group.
Then there exists a central extension
$$
   1\to T\to G'\to G\to 1,
$$
where $G'$ is a quasi-trivial $k$-group and $T$ is a $k$-torus.
\end{lemma}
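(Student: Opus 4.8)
The plan is to upgrade the \emph{$m$-extension} of Lemma~\ref{lem:epimorphism} in two steps: first replace its finite kernel of multiplicative type by a torus, and then replace the toric quotient of the resulting middle group by a quasi-trivial torus. (The statement is also exactly \cite[Prop.~2.2]{CT06}, so one may alternatively just cite Colliot-Th\'el\`ene; what follows is a self-contained route.)

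\emph{Step 1.} By Lemma~\ref{lem:epimorphism} choose an $m$-extension $1\to M\to G_0\to G\to 1$ with $\Pic(\overline{G_0})=0$ and $M$ a finite $k$-group of multiplicative type. Its character group $\XX(\overline M)$ is a finite $\Gal(\kbar/k)$-module, hence a quotient of a permutation $\Gal(\kbar/k)$-lattice, so $M$ embeds into a quasi-trivial $k$-torus $\Sigma$; fix such an embedding. Form the push-out $G_a=(G_0\times\Sigma)/M$, where $M$ is embedded into $G_0\times\Sigma$ anti-diagonally, $m\mapsto(m,m^{-1})$ (legitimate, as $M$ is central in $G_0$ and $\Sigma$ is commutative). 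One checks directly that this gives a central extension $1\to\Sigma\to G_a\to G\to 1$ with \emph{torus} kernel, and that $(G_a)\der\iso(G_0)\der$ because $M\cap((G_0)\der\times\{1\})=\{1\}$; in particular $(G_a)\sss=(G_0)\sss$ is simply connected, so $\Pic(\overline{G_a})=0$ by Remark~\ref{rem:Pic(Gbar)=0}.

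\emph{Step 2.} Write $T_a=(G_a)\tor$. Dualizing a surjection of a permutation lattice onto $\XX(\overline{T_a})^\vee$ yields a surjection of $k$-tori $p\colon Q\onto T_a$ with $Q$ quasi-trivial and $\ker p$ a torus. Put $G'=G_a\times_{T_a}Q$, the fibre product of the canonical map $G_a\to T_a$ and $p$. The first projection gives a central extension $1\to\ker p\to G'\to G_a\to 1$ with torus kernel, while the second projection $G'\onto Q$ has kernel $\ker[G_a\to T_a]$; inspecting this presentation shows that $(G')\uu\iso(G_a)\uu$, that $(G')\sss\iso(G_a)\sss$ (hence still simply connected), and that $(G')\tor\iso Q$ is quasi-trivial. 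Thus $G'$ is a quasi-trivial $k$-group.

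\emph{Step 3, and the main point.} Composing $G'\to G_a\to G$ produces a surjection $G'\onto G$ whose kernel $T$ fits in $1\to\ker p\to T\to\Sigma\to 1$; being an extension of a torus by a torus, $T$ is a torus, and it is central in $G'$ because the connected group $G'$ acts on the normal torus $T$ through the discrete group $\operatorname{Aut}(T)$, hence trivially. This produces the desired central extension $1\to T\to G'\to G\to 1$. The step that needs care is the bookkeeping in Step~2: verifying that forming the fibre product with the resolved torus genuinely leaves the unipotent radical and the simply connected semisimple part of $G_a$ unchanged while replacing its toric quotient by $Q$ --- this is the standard quasi-trivial resolution of a torus grafted onto $G_a$. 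If desired, one may first reduce to reductive $G$ via a Levi decomposition (available since $k$ has characteristic $0$), which makes this verification transparent but is not strictly necessary.
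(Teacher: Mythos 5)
Your construction is correct, but it is genuinely different from what the paper does: the paper's entire proof of Lemma~\ref{lem:CT} is a citation of Colliot-Th\'el\`ene \cite[Prop.-D\'ef.\ 3.1]{CT06}, whereas you give a self-contained argument bootstrapped from the paper's own Lemma~\ref{lem:epimorphism}. Your three steps all check out: the anti-diagonal pushout $(G_0\times\Sigma)/M$ legitimately replaces the finite central kernel $M$ by the quasi-trivial torus $\Sigma$ while leaving $(G_0)\der$ (hence $(G_0)\sss$, hence the vanishing of the Picard group via Remark~\ref{rem:Pic(Gbar)=0}) untouched, since the anti-diagonal meets $(G_0)\der\times\{1\}$ trivially; the fibre product with a surjection $Q\onto T_a$ of tori having quasi-trivial source and toric kernel (obtained correctly on the cocharacter side, so that the kernel's character lattice is torsion-free) replaces $(G_a)\tor$ by $Q$ without disturbing $(G_a)\ssu$, because the second projection $G'\onto Q$ has kernel exactly $(G_a)\ssu\times\{1\}$ and every character of $G'$ kills that kernel; and the total kernel $T$ is a connected solvable group without unipotent elements, hence a torus, central by rigidity of tori under conjugation by the connected group $G'$. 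What your route buys is transparency and independence from \cite{CT06}; what the citation buys is a stronger statement (Colliot-Th\'el\`ene's resolutions can be taken \emph{flasque}, which matters for descent-type applications, though not for anything in this paper). One small presentational point: you could shorten Step 2 by observing at the outset that only $(G')\sss$ and $(G')\tor$ need to be controlled to conclude quasi-triviality in the sense recalled just before the lemma, so the remark about $(G')\uu$ is not needed.
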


\begin{proof}
See \cite[Prop.-D\'ef. 3.1]{CT06}.
\end{proof}

\begin{lemma}\label{lem:for-functoriality}
Let $G_1\to G_2$ be a homomorphism of connected $k$-groups,
and let $G'_i\to G_i\ (i=1,2)$ be $m$-extensions.
Then there exists a commutative diagram
$$
\xymatrix{
G'_1\ar[d]   &G'_3\ar[l]\ar[d]\ar[r]   &G'_2\ar[d]  \\
G_1    &G_1\ar[l]_{{\rm id}}\ar[r]     &G_2
}
$$
in which $G'_3\to G_1$ is an $m$-extension.
\end{lemma}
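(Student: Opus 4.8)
The plan is to take $G'_3$ to be the identity component of the fibre product of $G'_1$ and $G'_2$ over $G_2$. Write $q_i\colon G'_i\to G_i$ for the given structure maps ($i=1,2$) and let $f\colon G_1\to G_2$ be the given homomorphism. Regarding $G'_1$ as lying over $G_2$ via $f\circ q_1$ and $G'_2$ as lying over $G_2$ via $q_2$, I set
$$
G'_3=\left(G'_1\times_{G_2}G'_2\right)^{\circ},
$$
a connected linear $k$-group, equipped with the two projections $p_1\colon G'_3\to G'_1$ and $p_2\colon G'_3\to G'_2$. As structure map I take $\pi_3:=q_1\circ p_1\colon G'_3\to G_1$. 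Then the left square of the required diagram commutes by definition of $\pi_3$, and the right square commutes because on the fibre product one has $q_2\circ p_2=f\circ q_1\circ p_1=f\circ\pi_3$.

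Next I would check that $\pi_3$ is an $m$-extension. Put $M_i=\ker q_i$; it is central in $G'_i$ and of multiplicative type. The fibre product $G'_1\times_{G_2}G'_2$ surjects onto $G'_1$ — given $a\in G'_1$ there is some $b\in G'_2$ with $q_2(b)=f(q_1(a))$, as $q_2$ is surjective — with kernel $1\times M_2$. Since $G'_1$ is connected, the subgroup $p_1(G'_3)$, being connected of finite index, equals $G'_1$; hence $p_1$, and therefore $\pi_3=q_1\circ p_1$, is surjective. The kernel of $\pi_3$ equals $(M_1\times M_2)\cap G'_3$, a closed subgroup of the group of multiplicative type $M_1\times M_2$, hence itself of multiplicative type, and it is central in $G'_3$ because $M_1\times M_2$ is central in $G'_1\times_{G_2}G'_2$.

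The one point I expect to require real input, rather than a formal verification, is that $\Pic(\overline{G'_3})=0$. Here I would observe that $p_1\colon G'_3\to G'_1$ is itself a central extension by the group $\ker p_1=(1\times M_2)\cap G'_3$ of multiplicative type, while $\Pic(\overline{G'_1})=0$ by hypothesis, so it suffices to prove the general fact: if $1\to M\to\tilde G\to G\to 1$ is a central extension of connected $k$-groups with $M$ of multiplicative type and $\Pic(\Gbar)=0$, then $\Pic(\overline{\tilde G})=0$. By Remark \ref{rem:Pic(Gbar)=0} this reduces to showing $\tilde G\scon\to\tilde G\sss$ is an isomorphism, knowing that $G\scon\to G\sss$ is. Passing to reductive quotients and then to derived groups, $\tilde G\to G$ induces a surjection $\tilde G\sss\to G\sss$ whose kernel is a central subgroup of $\tilde G\sss$ of multiplicative type, hence finite since $\tilde G\sss$ is semisimple; thus $\tilde G\sss\to G\sss$ is a central isogeny. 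Composing it with $\tilde G\scon\to\tilde G\sss$ gives a central isogeny $\tilde G\scon\to G\sss$ from a simply connected group onto the simply connected group $G\sss$, which must be an isomorphism; therefore $\ker[\tilde G\scon\to\tilde G\sss]\subseteq\ker[\tilde G\scon\to G\sss]=1$, i.e.\ $\tilde G\sss$ is simply connected. Granting this, $\Pic(\overline{G'_3})=0$, so $\pi_3\colon G'_3\to G_1$ is the desired $m$-extension, and the proof is complete. Finally I would note that functoriality of the fibre product construction makes the choices in Lemma~\ref{lem:epimorphism} and in this lemma compatible, which is what is needed for the functoriality statements later on.
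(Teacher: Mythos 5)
Your proof is correct, and it is essentially the argument the paper has in mind: the paper's own ``proof'' is merely a pointer to Kottwitz's Lemma 2.4.4, whose construction is exactly this fibre product $G'_1\times_{G_2}G'_2$ with the first projection as structure map. You have also correctly supplied the extra details needed because here the kernels are of multiplicative type rather than induced tori as in Kottwitz --- namely passing to the identity component (so that surjectivity onto $G'_1$ still holds by the finite-index argument) and verifying that a central extension of a connected group with trivial geometric Picard group by a group of multiplicative type again has trivial geometric Picard group, via Remark~\ref{rem:Pic(Gbar)=0} and the rigidity of simply connected groups under central isogenies.
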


\begin{proof}
Similar to \cite[Proof of Lemma 2.4.4]{Kot84}.
\end{proof}

The following theorem gives an alternative description of $\UPic(\Gbar)$
for a connected $k$-group $G$.

\begin{theorem}\label{thm:alternative-old-main-result}
Let $G$ be a connected $k$-group
 and let $X$ be a $k$-torsor  (a principal homogeneous space) under $G$.
Let
$$
1\to M\to G'\to G\to 1
$$
be an $m$-extension.
Then there is a canonical isomorphism,  functorial in the triple  $(G,G',X)$,
$$
\UPic(\Xbar)\isoto \left[\XX(\Gbar')\labelto{{\rm res}}\XX(\Mbar)\right\rangle
$$
in the derived category of discrete Galois modules.
Here ${\rm res}\colon\XX(\Gbar')\to\XX(\Mbar)$  is the restriction homomorphism.
\end{theorem}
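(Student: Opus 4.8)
The plan is to deduce Theorem~\ref{thm:alternative-old-main-result} from the main result Theorem~\ref{th:upic-hom-space} by viewing the $G$-torsor $X$ as a homogeneous space under the larger group $G'$. First I would observe that via the epimorphism $G'\to G$ the $G$-variety $X$ becomes a $G'$-variety, and since $G'\to G$ is surjective, $X$ is still a homogeneous space under $G'$. Choose a geometric point $\xb\in X(\kbar)$; because $X$ is a \emph{torsor} under $G$, the stabilizer of $\xb$ in $\Gbar$ is trivial, so the stabilizer of $\xb$ in $\Gbar'$ is exactly the kernel $\Mbar=\ker[\Gbar'\to\Gbar]$. By hypothesis $\Pic(\Gbar')=0$, so Theorem~\ref{th:upic-hom-space} applies to $X$ as a homogeneous space under $G'$ and yields a canonical isomorphism in the derived category of Galois modules
$$
\UPic(\Xbar)\iso\left[\XX(\Gbar')\labelto{\res}\XX(\Mbar)\right\rangle,
$$
where $\res$ is the restriction of characters from $\Gbar'$ to its subgroup $\Mbar$. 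This is precisely the asserted isomorphism.

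Next I would check that $\XX(\Mbar)$ carries the expected Galois action. Since $M$ is a $k$-group of multiplicative type, $\XX(\Mbar)$ has an evident Galois-module structure; one must verify it agrees with the structure coming from Popov's isomorphism $\Pic_{G'}(\Xbar)\iso\XX(\Mbar)$ used in Theorem~\ref{th:upic-hom-space}. This is a compatibility statement: the Popov identification is built from the descent data of $X$ and the $G'$-action, and restricting a $G'$-linearization of $\sO_{\Xbar}$ along the orbit map $\Mbar\into\Xbar\times\Gbar'$, $m\mapsto(\xb,m)$, recovers the tautological character of $\Mbar$ on the fibre; Galois-equivariance then follows because the point $\xb$ and the construction are defined over $k$ up to the canonical identifications discussed in~\S\ref{subsec:characters-homogeneous-spaces}. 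I would simply record this as part of the verification, citing Remark~\ref{rem:res}(1).

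Finally, for the functoriality in the triple $(G,G',X)$ I would invoke the functoriality already established in Theorem~\ref{th:upic-hom-space} (in $G$ and $X$) together with Lemma~\ref{lem:for-functoriality}, which provides, for a morphism $G_1\to G_2$ and chosen $m$-extensions $G'_i\to G_i$, an intermediate $m$-extension $G'_3\to G_1$ dominating both. A morphism of triples $(G_1,G'_1,X_1)\to(G_2,G'_2,X_2)$ is then filled in by passing through the $G'_3$-equivariant picture, and the naturality of the isomorphism in Theorem~\ref{th:upic-hom-space} gives commutativity of the resulting square of derived-category isomorphisms. I expect the main obstacle to be the bookkeeping in this last step: one must be careful that the independence of the isomorphism on the base point $\xb$ (which uses Lemma~\ref{lem:base-point}) is respected when the groups and varieties vary, and that the two different $m$-extensions are reconciled coherently via Lemma~\ref{lem:for-functoriality}. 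The identification of stabilizer ($=\Mbar$) and the application of Theorem~\ref{th:upic-hom-space} are essentially immediate; the functoriality verification is where the real work lies.
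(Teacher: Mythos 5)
Your proposal is correct and follows exactly the paper's own argument: the paper's proof consists precisely of observing that $G'$ acts on $X$ through $G$, that $X$ is then a homogeneous space under $G'$ with geometric stabilizer $\Mbar$, and that since $\Pic(\Gbar')=0$ Theorem~\ref{th:upic-hom-space} applies. The additional verifications you record (compatibility of the Galois structure on $\XX(\Mbar)$ and the functoriality bookkeeping via Lemma~\ref{lem:for-functoriality}) are sound but are left implicit in the paper.
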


\begin{proof}
The group $G'$ acts on $X$ via $G$, and $X$ is a homogeneous space of $G'$ with geometric stabilizer $\Mbar$.
Since  $\Pic(\Gbar')=0$, the theorem follows from Theorem \ref{th:upic-hom-space}.
\end{proof}

\begin{subsec}\label{subsec:second-proof}\emph{New proof of Theorem \ref{thm:old-main-result}.}
By \cite[Lemma 4.1]{BvH07} the canonical epimorphism $r\colon G\to G\red$
induces a canonical isomorphism in the derived category
$r^*\colon \UPic(\Gbar\red)\isoto\UPic(\Gbar)$, and therefore we may assume that $G$ is reductive.
Choose an $m$-extension
$$
1\to M\to G'\to G\to 1,
$$
then $G'$ is reductive as well.
By Theorem \ref{thm:alternative-old-main-result}
there is a canonical, functorial in $G$ and $G'$ isomorphism
$$
\UPic(\Gbar)\isoto [\XX(\Gbar')\to\XX(\Mbar)\rangle.
$$

Let $T\subset G$ be a maximal torus.
Let $T'$ denote the preimage of $T$ in $G'$,
then $T'$ is a maximal torus in the reductive group $G'$.
Consider the commutative diagram
\begin{equation}\label{eq:diagram-quasi-isomorphism-group}
\xymatrix{
{\XX(\Gbar')}\ar[d]\ar[r]   &\XX(\Mbar)\ar[d]  \\
\XX(\Tbar')\ar[r]           &\XX(\Mbar)\oplus\XX(\Tbar\scon)  \\
\XX(\Tbar)\ar[u]\ar[r]      &\XX(\Tbar\scon)\ar[u]
}
\end{equation}
with obvious arrows.
It is easy to check that this diagram gives quasi-isomorphisms
\begin{align*}
[\XX(\Gbar')\to   \XX(\Mbar)\rangle  &\labelto{} [\XX(\Tbar') \to \XX(\Mbar)\oplus\XX(\Tbar\scon)\rangle \\
[\XX(\Tbar)\to     \XX(\Tbar\scon)\rangle &\labelto{} [\XX(\Tbar') \to \XX(\Mbar)\oplus\XX(\Tbar\scon)\rangle,
\end{align*}
hence we obtain an isomorphism
$$
[\XX(\Gbar')\to   \XX(\Mbar)\rangle  \labelto{\sim}
[\XX(\Tbar)\to     \XX(\Tbar\scon)\rangle
$$
in the derived category.
Thus we obtain an isomorphism
$$
\UPic(\Gbar)\isoto
[\XX(\Tbar)\to     \XX(\Tbar\scon)\rangle
$$
in the derived category.
Using Lemma \ref{lem:for-functoriality}, one can easily see that this isomorphism
does not depend on the choice of the $m$-extension $G'$ of $G$  and is functorial in $(G,T)$.
\qed
\end{subsec}

\section{Picard and Brauer groups}
\begin{theorem}\label{cor:Pic-hom-space}
Let $X$ be a homogeneous space under a connected $k$-group $G$ with
  $\Pic(\Gbar) = 0$. Let $\Hbar$ be
the  stabilizer of a geometric point  $\xb \in X(\kbar)$
(we do not assume that $\Hbar$ is connected).
Then there is a canonical injection
$$
\Pic(X)\into H^1(k,[\XX(\Gbar)\to\XX(\Hbar)\rangle ),
$$
which is an isomorphism if $X(k)\neq\emptyset$ or $\Br(k)=0$.
\end{theorem}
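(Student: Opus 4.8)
The plan is to deduce the theorem by combining the identification
$\UPic(\Xbar)\iso[\XX(\Gbar)\labelto{\res}\XX(\Hbar)\rangle$
of Theorem~\ref{th:upic-hom-space} with the general comparison between the Picard group of a smooth variety and the hypercohomology of its extended Picard complex proved in \cite{BvH07}. Recall from \cite{BvH07} that for every smooth geometrically integral $k$-variety $X$ there is a canonical exact sequence, functorial in $X$,
$$
0\to\Pic(X)\to H^1(k,\UPic(\Xbar))\labelto{\delta}\Br(k)\to\Br_1(X)\to H^2(k,\UPic(\Xbar))\to H^3(k,\Gm),
$$
where $\Br_1(X)=\ker[\Br(X)\to\Br(\Xbar)]$ and $H^i(k,\hvar)$ denotes the $i$-th hypercohomology group. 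Since the isomorphism of Theorem~\ref{th:upic-hom-space} is an isomorphism in the derived category of Galois modules, it yields a canonical isomorphism $H^1(k,\UPic(\Xbar))\iso H^1(k,[\XX(\Gbar)\to\XX(\Hbar)\rangle)$, under which the inclusion $\Pic(X)\into H^1(k,\UPic(\Xbar))$ becomes the asserted canonical injection. It does not depend on the choice of $\xb$, since, as remarked after Theorem~\ref{th:upic-hom-space}, that isomorphism does not.

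It then remains to show that $\delta=0$ under either hypothesis, which forces the injection to be an isomorphism. If $\Br(k)=0$ this is immediate. If $X(k)\neq\emptyset$, choose $x_0\in X(k)$: the composite $\Spec k\to X\to\Spec k$ in which the first arrow is $x_0$ is the identity, and $\Br(k)\to\Br(\Xbar)$ factors through $\Br(\kbar)=0$, so $\Br(k)\to\Br_1(X)$ is (split) injective and hence $\Image\delta=\ker[\Br(k)\to\Br_1(X)]=0$. Alternatively, and perhaps more in the spirit of the paper, $x_0$ induces, via the local modification of \S\ref{subsec:functoriality} applied to the non-dominant morphism $x_0\colon\Spec k\to X$, a pull-back morphism $\UPic(\Xbar)\to\UPic(\overline{\Spec k})=0$ compatible with the triangle $\Gm[0]\to\KDiv(\Xbar)\to\UPic(\Xbar)\to\Gm[1]$; by functoriality $\delta$ then factors through $H^1(k,0)=0$. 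Either way $\Pic(X)\iso H^1(k,[\XX(\Gbar)\to\XX(\Hbar)\rangle)$.

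The proof is essentially formal given Theorem~\ref{th:upic-hom-space} and the cited exact sequence, so I do not anticipate a real obstacle. The only points that need a little care are the functoriality of the $\UPic$-construction --- already established in \S\ref{subsec:functoriality}, and used both for the independence of the injection on $\xb$ and (in the second argument) for the vanishing of $\delta$ when $X(k)\neq\emptyset$ --- and the verification that the comparison map $\Pic(X)\to H^1(k,\UPic(\Xbar))$ of \cite{BvH07} is compatible with the identification of Theorem~\ref{th:upic-hom-space}, so that the resulting injection is indeed canonical and base-point independent.
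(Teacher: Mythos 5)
Your proposal is correct and follows exactly the paper's own route: the paper likewise deduces the theorem by combining Theorem~\ref{th:upic-hom-space} with the exact sequence of \cite[Prop.~2.19]{BvH07}, citing that reference for the vanishing of the maps $H^1(k,\UPic(\Xbar))\to\Br(k)$ and $H^2(k,\UPic(\Xbar))\to H^3(k,\Gm)$ when $X(k)\neq\emptyset$. Your explicit splitting argument for that vanishing (a $k$-point makes $\Br(k)\to\Br_1(X)$ split injective, so the connecting map has zero image) is a correct unpacking of what the paper leaves to the citation.
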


\begin{proof}
This follows immediately from Theorem~\ref{th:upic-hom-space} and
  the exact sequence
\begin{equation}\label{eq:Pic-Br}
0\to\Pic(X)\to H^1(k, \UPic(\Xbar))\to \Br(k)\to \Br_1(X)\to H^2(k,\UPic(\Xbar))\to H^3(k,\Gm)
\end{equation}
established in \cite[Prop.~2.19]{BvH07} for an
arbitrary smooth geometrically integral variety.
Note that the homomorphisms $H^1(k, \UPic(\Xbar))\to \Br(k)$ and $H^2(k,\UPic(\Xbar))\to H^3(k,\Gm)$
in this exact sequence are zero if $X(k)\neq\emptyset$, see  \cite[Prop.~2.19]{BvH07}.
\end{proof}

  We also prove a conjecture
  \cite[Conj.~3.2]{Borovoi:hasse-homogeneous} of the first-named author
  concerning the subquotient $\Bra(X) = \ker [ \Br(X) \to
  \Br(\Xbar)] / \im [\Br(k) \to \Br(X)]$ of the Brauer group of a
  homogeneous space $X$.

\begin{theorem}\label{cor:br-hom-space}
Let $X$, $G$, and $\Hbar$ be as in Theorem \ref{cor:Pic-hom-space}.
Then there is  a canonical injection
  \begin{equation*}
  \Bra(X) \into H^2(k, [\Chi(\Gbar) \to \Chi(\Hbar) \rangle ),
  \end{equation*}
  which is an isomorphism if $X(k) \neq \emptyset$ or
  $H^3(k,\Gm) = 0$ (e.g., when $k$ is a number field or a
  $\mathfrak{p}$-adic field).
\end{theorem}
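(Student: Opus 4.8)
The plan is to deduce this from the main Theorem~\ref{th:upic-hom-space} together with the exact sequence~\eqref{eq:Pic-Br} of \cite[Prop.~2.19]{BvH07}, proceeding exactly as in the proof of Theorem~\ref{cor:Pic-hom-space} but reading off one term further to the right. Recall that $\Br_1(X)=\ker[\Br(X)\to\Br(\Xbar)]$ and that~\eqref{eq:Pic-Br} contains the exact piece
$$
\Br(k)\to\Br_1(X)\to H^2(k,\UPic(\Xbar))\to H^3(k,\Gm).
$$

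First I would observe that, since $X$ is geometrically integral over a field of characteristic $0$, the map $\Br(k)\to\Br(\Xbar)$ factors through $\Br(\kbar)=0$; hence $\im[\Br(k)\to\Br(X)]\subseteq\Br_1(X)$ and
$$
\Bra(X)=\Br_1(X)/\im[\Br(k)\to\Br(X)]=\coker\bigl[\Br(k)\to\Br_1(X)\bigr].
$$
The exact piece above then yields a short exact sequence
$$
0\to\Bra(X)\to H^2(k,\UPic(\Xbar))\to H^3(k,\Gm),
$$
so $\Bra(X)$ injects canonically into $H^2(k,\UPic(\Xbar))$, and this injection is an isomorphism whenever the boundary map $H^2(k,\UPic(\Xbar))\to H^3(k,\Gm)$ vanishes. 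The latter is automatic if $H^3(k,\Gm)=0$, e.g.\ for $k$ a number field or a $\mathfrak p$-adic field; and it holds when $X(k)\neq\emptyset$, since a $k$-point induces a retraction killing this boundary map, as recorded in \cite[Prop.~2.19]{BvH07}.

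It then remains to transport everything along the canonical isomorphism $\UPic(\Xbar)\iso[\XX(\Gbar)\labelto{\res}\XX(\Hbar)\rangle$ of Theorem~\ref{th:upic-hom-space} in the derived category of Galois modules; this induces a canonical isomorphism $H^2(k,\UPic(\Xbar))\iso H^2(k,[\XX(\Gbar)\to\XX(\Hbar)\rangle)$ and hence the asserted canonical injection $\Bra(X)\into H^2(k,[\XX(\Gbar)\to\XX(\Hbar)\rangle)$, which is an isomorphism under the stated hypotheses. I do not expect a real obstacle here: the only delicate points are the bookkeeping that identifies the subquotient $\Bra(X)$ with $\coker[\Br(k)\to\Br_1(X)]$ and the vanishing of the relevant boundary maps, and both are already contained in \cite[Prop.~2.19]{BvH07}, so the argument is in effect a one-step extension of the proof of Theorem~\ref{cor:Pic-hom-space}. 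Finally I would remark that this proves the conjecture \cite[Conj.~3.2]{Borovoi:hasse-homogeneous} on $\Bra(X)$.
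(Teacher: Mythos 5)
Your proposal is correct and follows exactly the paper's route: the paper likewise deduces the theorem immediately from Theorem~\ref{th:upic-hom-space} together with the exact sequence~\eqref{eq:Pic-Br} of \cite[Prop.~2.19]{BvH07}, noting that the boundary map $H^2(k,\UPic(\Xbar))\to H^3(k,\Gm)$ vanishes when $X(k)\neq\emptyset$. The extra bookkeeping you supply (identifying $\Bra(X)$ with $\coker[\Br(k)\to\Br_1(X)]$) is left implicit in the paper but is exactly the intended reading.
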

\begin{proof}
  This follows immediately from Theorem~\ref{th:upic-hom-space} and  exact sequence \eqref{eq:Pic-Br}.
\end{proof}

\begin{subsec}\label{subsec-torsor-Pic}
 We consider the special case when $X$ is a \emph{principal} homogeneous space of a connected $k$-group $G$
(we do not assume that $\Pic(\Gbar)=0$).
Let
$$
1\to M\to G'\to G\to 1
$$
be an $m$-extension.
Then, assuming that either $X$ has a $k$-point or $\Br(k)=0$,
we obtain from Theorem \ref{cor:Pic-hom-space} that
\begin{equation}\label{eq:Pic-torsor}
\Pic(X)\cong H^1(k, [\XX(\Gbar')\to\XX(\Mbar)\rangle).
\end{equation}
Let $T\subset G\red$ and $T\scon\subset G\scon$ be as in Notation \ref{notation:G-subquotients}
Using the isomorphism in the derived category
given by  diagram \eqref{eq:diagram-quasi-isomorphism-group} in \S\ref{subsec:second-proof},
we obtain from \eqref{eq:Pic-torsor} that
\begin{equation}\label{eq:Pic-torsor-tori}
 \Pic(X)\cong H^1(k, [\XX(\Tbar)\to\XX(\Tbar\scon)\rangle).
\end{equation}

Similarly, assuming that either $X$ has a $k$-point or $H^3(k,\Gm)=0$ (and not assuming that $\Br(k)=0$),
we obtain from Theorem \ref{cor:br-hom-space} that
\begin{equation}\label{eq:Br-torsor}
\Bra(X)\cong H^2(k, [\XX(\Gbar')\to\XX(\Mbar)\rangle)
\end{equation}
and
\begin{equation}\label{eq:Br-torsor-tori}
 \Bra(X)\cong H^2(k, [\XX(\Tbar)\to\XX(\Tbar\scon)\rangle).
\end{equation}

The  formulae \eqref{eq:Pic-torsor-tori} and \eqref{eq:Br-torsor-tori}  are  versions of our previous results \cite[Cor.~5 and Cor.~7]{BvH07}
and results of Kottwitz \cite[2.4]{Kot84}.
Note that when $G$ is a $k$-torus or a semisimple $k$-group,
formulae for $\Pic(G)$ and $\Bra(G)$ were earlier given by Sansuc \cite[Lemme 6.9]{Sansuc}.
\end{subsec}

\section{Comparison with topological invariants}
\begin{subsec}
  For a $k$-group $G$, the main result of~\cite{BvH07} and the
  comparison between the algebraic and the topological fundamental
  group of complex linear algebraic groups in \cite{Borovoi:Memoir},
  imply that the derived dual object
  \begin{equation*}
    \UPic(\Gbar)^D := R \Hom_\Zz(\UPic(\Gbar), \Zz)
  \end{equation*}
  is represented by a finitely generated Galois module concentrated in degree
  $0$ which, as an abelian group, is isomorphic to the fundamental
  group of the complex analytic space $G(\Cc)$  for any embedding
$\kbar \into \Cc$ of $\kbar$ into the complex numbers.
  For later use, let us make the further observation that in fact
  \begin{align*}
    U(\Gbar)^D =& \Chi(\Gbar)^D  = \pi_1(G(\Cc)) / \pi_1(G(\Cc))\tors, \\
    (\Pic(\Gbar)[-1])^D = &\Ext^1_\Zz(\Pic(\Gbar),\Zz)=\Hom(\Pic(\Gbar), \Qq/\Zz)  = \pi_1(G(\Cc))\tors,
  \end{align*}
see \eqref{eq:RiHom} below,
  where $M\tors$ denotes the torsion subgroup of a finitely generated
  abelian group $M$.

 For a homogeneous space $X$ under a $k$-group $G$, the derived dual
 object in general is not representable by a single group
 concentrated in degree $0$, as we see from the following lemma.
\end{subsec}

\begin{lemma}\label{lem:upicX-dual-les}
  Let $X$ be a homogeneous space with  geometric stabilizer  $\Hbar$
under a connected $k$-group $G$ with  $\Pic(\Gbar) = 0$.
  We have a long exact sequence of finitely generated Galois modules
  \begin{multline}\label{eq:upicX-dual-les}
    0 \to \sH^{-1}(\UPic(\Xbar)^D) \to \Hom_\Zz(\Chi(\Hbar), \Zz) \to
    \Hom_\Zz(\Chi(\Gbar), \Zz) \to \\
\sH^0(\UPic(\Xbar)^D) \to
    \Hom_\Zz(\Chi(\Hbar)\tors, \Qq/\Zz) \to 0
  \end{multline}
  and $\sH^{i}(\UPic(\Xbar)^D)=0$ for $i\neq 0,-1$.
\end{lemma}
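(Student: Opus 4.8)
The plan is to reduce everything to the concrete model for $\UPic(\Xbar)$ provided by Theorem~\ref{th:upic-hom-space}. By that theorem there is an isomorphism $\UPic(\Xbar)\iso[\Chi(\Gbar)\labelto{\res}\Chi(\Hbar)\rangle$ in the derived category of Galois modules, with $\Chi(\Gbar)$ in degree $0$ and $\Chi(\Hbar)$ in degree $1$. The tautological short exact sequence of complexes
$$
0\to\Chi(\Hbar)[-1]\to[\Chi(\Gbar)\to\Chi(\Hbar)\rangle\to\Chi(\Gbar)\to 0
$$
yields a distinguished triangle $\Chi(\Hbar)[-1]\to\UPic(\Xbar)\to\Chi(\Gbar)\labelto{\res}\Chi(\Hbar)$ in the derived category of Galois modules, whose connecting morphism is $\res$.

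Next I would apply the contravariant functor $(\hvar)^D=R\Hom_\Zz(\hvar,\Zz)$ and rotate, obtaining a distinguished triangle
$$
R\Hom_\Zz(\Chi(\Gbar),\Zz)\to\UPic(\Xbar)^D\to R\Hom_\Zz(\Chi(\Hbar),\Zz)[1]\to R\Hom_\Zz(\Chi(\Gbar),\Zz)[1].
$$
Since $\Chi(\Gbar)$ and $\Chi(\Hbar)$ are finitely generated, a length-one free resolution shows that, for $M=\Chi(\Gbar),\Chi(\Hbar)$, the complex $R\Hom_\Zz(M,\Zz)$ is concentrated in degrees $0$ and $1$ with $\sH^0=\Hom_\Zz(M,\Zz)$ and $\sH^1=\Ext^1_\Zz(M,\Zz)$; and the sequence $0\to\Zz\to\Qq\to\Qq/\Zz\to0$ together with the divisibility of $\Qq$ identifies $\Ext^1_\Zz(M,\Zz)\cong\Hom_\Zz(M\tors,\Qq/\Zz)$. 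Now $\Chi(\Gbar)$, being the character group of a connected linear $k$-group, is torsion-free, so $R\Hom_\Zz(\Chi(\Gbar),\Zz)=\Hom_\Zz(\Chi(\Gbar),\Zz)$ is concentrated in degree $0$, while $R\Hom_\Zz(\Chi(\Hbar),\Zz)[1]$ has $\sH^{-1}=\Hom_\Zz(\Chi(\Hbar),\Zz)$ and $\sH^0=\Hom_\Zz(\Chi(\Hbar)\tors,\Qq/\Zz)$ and vanishes in all other degrees.

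Finally I would read off the long exact cohomology sequence of the dualized triangle. In degrees $\le-2$ and $\ge1$ both outer terms vanish, which gives $\sH^i(\UPic(\Xbar)^D)=0$ for $i\neq0,-1$; in degrees $-1$ and $0$ the long exact sequence collapses to exactly the six-term sequence~\eqref{eq:upicX-dual-les}, the map $\Hom_\Zz(\Chi(\Hbar),\Zz)\to\Hom_\Zz(\Chi(\Gbar),\Zz)$ being the transpose of $\res$. Finite generation of all the Galois modules involved is immediate from finite generation of $\Chi(\Gbar)$ and $\Chi(\Hbar)$. None of these steps is hard in itself; the points deserving care are the bookkeeping of the degree shifts (so that $\UPic(\Xbar)^D$ is correctly placed in degrees $-1$ and $0$) and checking that dualisation, the triangle, and the universal-coefficient identification are all Galois-equivariant, which follows from the functoriality built into Theorem~\ref{th:upic-hom-space} and into $R\Hom_\Zz(\hvar,\Zz)$.
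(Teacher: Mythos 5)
Your argument is correct and is essentially the proof in the paper: both start from the exact triangle $\Chi(\Hbar)[-1]\to\UPic(\Xbar)\to\Chi(\Gbar)\to\Chi(\Hbar)$ furnished by Theorem~\ref{th:upic-hom-space}, apply $R\Hom_\Zz(\hvar,\Zz)$, and read off the long exact cohomology sequence using $R^0\Hom_\Zz(M,\Zz)=\Hom_\Zz(M,\Zz)$, $R^1\Hom_\Zz(M,\Zz)=\Ext^1_\Zz(M,\Zz)=\Hom_\Zz(M\tors,\Qq/\Zz)$ for finitely generated $M$. Your explicit remark that $\Chi(\Gbar)$ is torsion-free (which kills the final term $\Ext^1_\Zz(\Chi(\Gbar),\Zz)$ and makes the sequence end in $0$) is a point the paper leaves implicit.
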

\begin{proof}
  It follows from Theorem~\ref{th:upic-hom-space} that we have an exact triangle
  \begin{equation*}
     \Chi(\Hbar) [-1] \to \UPic(\Xbar) \to \Chi(\Gbar)\to\Chi(\Hbar)
  \end{equation*}
  in the derived category of Galois modules.
Applying $R \Hom_\Zz (\hvar,\Zz)$,
we get the dual triangle
$$
 R\Hom_\Zz(\Chi(\Hbar),\Zz) \to  R\Hom_\Zz(\Chi(\Gbar),\Zz) \to \UPic(\Xbar)^D\to R\Hom_\Zz(\Chi(\Hbar),\Zz)[1].
$$
Taking cohomology yields the exact sequence
\begin{multline*}
 0\to \sH^{-1} (\UPic(\Xbar)^D)  \to R^0\Hom_\Zz(\Chi(\Hbar),\Zz) \to   R^0\Hom_\Zz(\Chi(\Gbar),\Zz)\to \\
\sH^{0} (\UPic(\Xbar)^D)  \to R^1\Hom_\Zz(\Chi(\Hbar),\Zz) \to   R^1\Hom_\Zz(\Chi(\Gbar),\Zz),
\end{multline*}
which coincides with  exact sequence \eqref{eq:upicX-dual-les}, since
  \begin{equation}\label{eq:RiHom}
     R^i \Hom_\Zz(M, \Zz) = \begin{cases}
          \Hom_\Zz(M, \Zz) & \text{if $i = 0$,} \\
          \Ext^1_\Zz(M, \Zz) & \text{if $i = 1$,} \\
          0 & \text{otherwise}
        \end{cases}
  \end{equation}
  and $\Ext^1_\Zz(M, \Zz) = \Hom_\Zz(M\tors, \Qq/\Zz)$
  for any finitely generated abelian group $M$.
\end{proof}

\begin{subsec}\label{subsec:H1}
Let $\Hbar$ be a $\kbar$-group, not necessarily connected.
We denote by $\Hbar\mult$ the largest quotient group of multiplicative type of $\Hbar$,
then $\Chi(\Hbar\mult)=\Chi(\Hbar)$.
We set $\Hbar_1=\ker[\Hbar\to\Hbar\mult]$, then $\Hbar_1=\bigcap_{\chi\in\Chi(\Hbar)} \ker\,\chi$.
We consider the following condition on $\Hbar$:
\begin{equation}\label{eq:H1}
\Hbar_1 \text{ is connected and } \XX(\Hbar_1)=0.
\end{equation}
Note that this condition is satisfied if $\Hbar$ is connected.
\end{subsec}

\begin{lemma}\label{lem:H1-pi1}
Let $H$ be a $\Cc$-group satisfying the condition \eqref{eq:H1}.
Then there is a canonical, functorial in $H$ epimorphism
$$
\pi_1(H^\circ (\Cc))\to \Hom(\XX(H),\Zz)
$$
inducing an isomorphism
$$
\pi_1(H^\circ (\Cc))/\pi_1(H^\circ (\Cc))\tors \isoto \Hom(\XX(H),\Zz),
$$
where we denote by $H^\circ$ the identity component of $H$.
\end{lemma}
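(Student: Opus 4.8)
The plan is to construct the homomorphism directly out of characters, prove surjectivity by a homotopy long exact sequence, and then identify its kernel with the torsion subgroup using the hypothesis on $H_1$. Set $T_0=(H\mult)^\circ$, a torus. Recall from \ref{subsec:H1} that $\XX(H)=\XX(H\mult)$; since the identity component of a diagonalizable group corresponds to the free quotient of its character group, we have $\XX(T_0)=\XX(H)/\XX(H)\tors$, whence a canonical identification $\Hom(\XX(H),\Zz)=\Hom(\XX(T_0),\Zz)=\pi_1(T_0(\Cc))$ (using the fixed isomorphism $\pi_1(\Cc^\times)\isoto\Zz$). I would define the map in the lemma by sending a loop $\gamma\in\pi_1(H^\circ(\Cc))$ to the functional $\chi\mapsto\chi_*(\gamma)$, where a character $\chi\in\XX(H)$ is restricted to $H^\circ$ and $\chi_*\colon\pi_1(H^\circ(\Cc))\to\pi_1(\Cc^\times)=\Zz$ is the induced map; this is manifestly canonical and functorial in $H$, since a homomorphism $H\to H'$ induces compatible maps on identity components and, by restriction, on character groups. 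Because every character of $H$ factors through $H\to H\mult$, this map is precisely the composite of the homomorphism $\pi_1(H^\circ(\Cc))\to\pi_1(T_0(\Cc))$ induced by $H^\circ\to T_0$ with the standard isomorphism $\pi_1(T_0(\Cc))\cong\Hom(\XX(T_0),\Zz)$.

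Next I would record the short exact sequence $1\to H_1\to H^\circ\to T_0\to 1$. Since $H_1$ is connected it is contained in $H^\circ$; the image of $H^\circ$ in $H\mult$ is a closed connected subgroup of finite index (its quotient being a quotient of $H/H^\circ$), hence contains $(H\mult)^\circ=T_0$, while being connected it is contained in $T_0$, so it equals $T_0$; and the kernel of $H^\circ\to T_0$ is $H^\circ\cap H_1=H_1$. Passing to $\Cc$-points gives a fibre bundle $H_1(\Cc)\to H^\circ(\Cc)\to T_0(\Cc)$ of complex Lie groups, hence a long exact homotopy sequence. As $H_1$ is connected, $H_1(\Cc)$ is connected, so $\pi_0(H_1(\Cc))=0$ and the map $\pi_1(H^\circ(\Cc))\to\pi_1(T_0(\Cc))$ is surjective; this proves the epimorphism assertion.

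It then remains to show that the kernel $K$ of $\pi_1(H^\circ(\Cc))\to\Hom(\XX(H),\Zz)$ equals $\pi_1(H^\circ(\Cc))\tors$. One inclusion is automatic, as $\Hom(\XX(H),\Zz)$ is torsion-free, so $\pi_1(H^\circ(\Cc))\tors\subset K$. For the reverse, the long exact sequence shows $K$ is a quotient of $\pi_1(H_1(\Cc))$, so it suffices to see that $\pi_1(H_1(\Cc))$ is finite. This is where the hypothesis $\XX(H_1)=0$ enters: for the connected group $H_1$ it forces the torus $H_1\tor=H_1\red/H_1\sss$ to be trivial, so $H_1\red=H_1\sss$ is semisimple; by the Levi decomposition $H_1\cong H_1\uu\rtimes H_1\sss$, and $H_1\uu(\Cc)$ is isomorphic to an affine space, hence contractible, so $H_1(\Cc)$ is homotopy equivalent to $H_1\sss(\Cc)$, whose fundamental group is finite (its algebraic simply connected cover $H_1\scon\to H_1\sss$ has simply connected $\Cc$-points). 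Hence $K$ is finite, so $K\subset\pi_1(H^\circ(\Cc))\tors$, the two subgroups coincide, and the epimorphism induces an isomorphism $\pi_1(H^\circ(\Cc))/\pi_1(H^\circ(\Cc))\tors\isoto\Hom(\XX(H),\Zz)$; functoriality of this isomorphism follows from the functoriality already noted together with that of $\pi_1$.

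The only non-formal input is the finiteness of $\pi_1(H_1(\Cc))$, which I expect to be the crux: it combines the hypothesis $\XX(H_1)=0$ — which, with connectedness, reduces $H_1$ to a unipotent-by-semisimple group — with the classical topological fact that a connected semisimple complex linear algebraic group has finite fundamental group. Connectedness of $H_1$ itself is used only to get the surjectivity in the homotopy sequence.
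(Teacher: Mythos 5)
Your proof is correct and follows essentially the same route as the paper: identify $\Hom(\XX(H),\Zz)$ with $\pi_1((H\mult)^\circ(\Cc))$, apply the homotopy exact sequence of the fibration coming from $1\to H_1\to H^\circ\to (H\mult)^\circ\to 1$, and use that $\pi_1(H_1(\Cc))$ is finite while the target is torsion free. The only difference is that you supply details the paper leaves implicit (the explicit character-theoretic description of the map, the verification that $H^\circ$ surjects onto $(H\mult)^\circ$ with kernel $H_1$, and the Levi-decomposition argument for the finiteness of $\pi_1(H_1(\Cc))$), all of which are accurate.
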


\begin{proof}
We have
$$
\Hom(\XX(H),\Zz)=\Hom(\XX(H\mult),\Zz)=\Hom(\XX((H\mult)^\circ ),\Zz)=\pi_1((H\mult)^\circ (\Cc)).
$$
Since $H$ satisfies the condition \eqref{eq:H1},
we have an exact sequence of connected $\Cc$-groups
$$
1\to H_1\to H^\circ \to (H\mult)^\circ \to 1
$$
and a homotopy exact sequence
$$
\pi_1(H_1(\Cc))\to\pi_1(H^\circ (\Cc))\to\pi_1((H\mult)^\circ (\Cc))\to\pi_0(H_1(\Cc))=1.
$$
From this exact sequence we obtain  an epimorphism
$$
\pi_1(H^\circ (\Cc))\to\pi_1((H\mult)^\circ (\Cc))=\Hom(\XX(H),\Zz).
$$
Since  $\pi_1(H_1(\Cc))$ is a finite group and $\Hom(\XX(H),\Zz)$ is torsion free, we obtain an isomorphism
$$
\pi_1(H^\circ (\Cc))/\pi_1(H^\circ (\Cc))\tors\isoto \Hom(\XX(H),\Zz).
$$
\end{proof}

\begin{theorem}\label{th:upicd-p1p2}
  Let $X$ be a homogeneous space
under a connected $k$-group $G$
with connected geometric stabilizers.
 Let us fix an embedding $\kbar \into \Cc$ and an isomorphism $\pi_1(\Cc^\times)\to \Zz$.
  \begin{theoremlist}
    \item
      We have an isomorphism of  groups
      \begin{equation*}
         \sH^0(\UPic(\Xbar)^D)  \iso \pi_1(X(\Cc)).
      \end{equation*}
     \item
      We have an isomorphism of abelian groups
       \begin{equation*}
         \sH^{-1}(\UPic(\Xbar)^D)  \iso \pi_2(X(\Cc))/\pi_2(X(\Cc))\tors.
       \end{equation*}
  \end{theoremlist}
\end{theorem}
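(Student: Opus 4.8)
The plan is to combine the algebraic long exact sequence of Lemma~\ref{lem:upicX-dual-les} with the topological homotopy exact sequence of the bundle $H(\Cc)\to G(\Cc)\to X(\Cc)$, and to match the two via the comparison maps of Lemma~\ref{lem:H1-pi1}. First I would reduce to the situation $\Pic(\Gbar)=0$. By Lemma~\ref{lem:CT} there is a central extension $1\to T\to G'\to G\to 1$ with $G'$ quasi-trivial and $T$ a $k$-torus; then $\Pic(\Gbar')=0$ by Remark~\ref{rem:Pic(Gbar)=0}. The group $G'$ acts transitively on $X$ through $G$, so $X$ is a homogeneous space of $G'$ whose geometric stabilizer $\Hbar'$ is the preimage of $\Hbar$ in $\Gbar'$; since it sits in an extension $1\to T_{\kbar}\to\Hbar'\to\Hbar\to 1$ of the connected group $\Hbar$ by the torus $T_{\kbar}$, it is connected. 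As $X$, $\UPic(\Xbar)$ and $X(\Cc)$ are unchanged, I may replace $(G,\Hbar)$ by $(G',\Hbar')$ and assume henceforth that $\Pic(\Gbar)=0$ and $\Hbar$ is connected.

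Since $\Hbar$ is connected, $\XX(\Hbar)$ is torsion-free, so $\Hom_\Zz(\XX(\Hbar)\tors,\Qq/\Zz)=0$ and Lemma~\ref{lem:upicX-dual-les} collapses to the four-term exact sequence
$$
0\to\sH^{-1}(\UPic(\Xbar)^D)\to\Hom_\Zz(\XX(\Hbar),\Zz)\labelto{\alpha}\Hom_\Zz(\XX(\Gbar),\Zz)\to\sH^0(\UPic(\Xbar)^D)\to 0,
$$
in which $\alpha$ is the $\Zz$-dual of the restriction map $\res\colon\XX(\Gbar)\to\XX(\Hbar)$ coming from the connecting morphism of the triangle underlying that lemma. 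On the topological side, base change along the fixed embedding $\kbar\into\Cc$ gives $X\times_k\Cc\cong (G\times_k\Cc)/(\Hbar\times_{\kbar}\Cc)$ with connected stabilizer, so $G(\Cc)\to X(\Cc)$ is a locally trivial fibre bundle with fibre $H(\Cc)$; its homotopy exact sequence, together with the vanishing of $\pi_2$ of any Lie group and with $\pi_0(H(\Cc))=1$, yields the four-term exact sequence
$$
0\to\pi_2(X(\Cc))\to\pi_1(H(\Cc))\labelto{\beta}\pi_1(G(\Cc))\to\pi_1(X(\Cc))\to 0,
$$
with $\beta$ induced by the inclusion $H\into G$.

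Now I would compare the two sequences. Since connected groups satisfy~\eqref{eq:H1}, Lemma~\ref{lem:H1-pi1} provides functorial epimorphisms $\pi_1(H(\Cc))\onto\Hom_\Zz(\XX(\Hbar),\Zz)$ and $\pi_1(G(\Cc))\onto\Hom_\Zz(\XX(\Gbar),\Zz)$ whose kernels are $\pi_1(H(\Cc))\tors$ and $\pi_1(G(\Cc))\tors$; by functoriality applied to $H\into G$ they form a commutative square whose bottom arrow is, up to sign, $\alpha$. Because $\Pic(\Gbar)=0$, the group $\Gbar\sss$ is simply connected, so $\pi_1((G\sss)(\Cc))=1$ and hence $\pi_1(G(\Cc))\cong\pi_1((G\tor)(\Cc))$ is free (the unipotent radical being contractible); thus $\pi_1(G(\Cc))$ is torsion-free and the right-hand vertical map is an isomorphism. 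A diagram chase then identifies $\sH^0(\UPic(\Xbar)^D)=\coker\alpha$ with $\coker\beta=\pi_1(X(\Cc))$, which is assertion~(i), and identifies $\sH^{-1}(\UPic(\Xbar)^D)=\ker\alpha$ with the image of $\ker\beta=\pi_2(X(\Cc))$ under the left vertical surjection; the kernel of that surjection restricted to $\ker\beta$ is $\pi_2(X(\Cc))\cap\pi_1(H(\Cc))\tors=\pi_2(X(\Cc))\tors$, so $\sH^{-1}(\UPic(\Xbar)^D)\cong\pi_2(X(\Cc))/\pi_2(X(\Cc))\tors$, which is assertion~(ii).

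The step I expect to be the main obstacle is the verification that the comparison epimorphisms of Lemma~\ref{lem:H1-pi1} genuinely fit into a commutative square with the map $\alpha$ arising from the connecting morphism $\XX(\Gbar)\to\XX(\Hbar)$ of the triangle underlying Lemma~\ref{lem:upicX-dual-les} — that is, a compatibility between the topological comparison for the pair $H\into G$ and the derived dual construction of $\UPic(\Xbar)$. Fortunately, signs are irrelevant here, since only isomorphisms of abelian groups are asserted, so once the square is seen to commute up to sign the remaining diagram chase is routine.
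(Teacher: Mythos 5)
Your proposal is correct and follows essentially the same route as the paper: reduce to $\Pic(\Gbar)=0$ via Lemma~\ref{lem:CT}, then compare the four-term sequence from Lemma~\ref{lem:upicX-dual-les} with the homotopy exact sequence of the fibration $H(\Cc)\to G(\Cc)\to X(\Cc)$ (using $\pi_2(G(\Cc))=0$) by means of the functorial surjections of Lemma~\ref{lem:H1-pi1}, noting that $\pi_1(G(\Cc))$ is torsion-free so the right-hand comparison map is an isomorphism. The only cosmetic difference is that the paper delegates part~(ii) to the slightly more general Proposition~\ref{prop:upicd-p1p2-H1} (stabilizers satisfying condition~\eqref{eq:H1} rather than connected), but the diagram chase is the same as yours.
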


\begin{proof}
By Lemma \ref{lem:CT} we can represent $X$ as a homogeneous space of
a connected $k$-group $G'$ with $\Pic(\Gbar')=0$ with connected geometric stabilizers,
see also \cite[Lemma 5.2]{Bor96}.
Therefore we may and shall assume that $\Pic(\Gbar)=0$.
 After base change to $\Cc$ we may assume $X = G/H$.
Since  $\pi_2(G(\Cc)) = 0$
(\'Elie Cartan, for the case of  compact Lie groups see \cite{Borel}),
the  long exact sequence of homotopy groups of a fibration gives us an
  exact sequence
\begin{equation*}
  0  \to \pi_2(X(\Cc)) \to \pi_1(H(\Cc)) \to \pi_1(G(\Cc))\to\pi_1(X(\Cc))\to 1.
\end{equation*}

\noindent\emph{Proof of (i).}
We have a commutative diagram with exact top row
$$
\xymatrix{
{\pi_1(H(\Cc))}\ar[d]\ar[r]      &\pi_1(G(\Cc))\ar[d]\ar[r]  &\pi_1(X(\Cc))\ar[r] &1\\
\Hom(\XX(H),\Zz)\ar[r]           &\Hom(\XX(G),\Zz)
}
$$
where by Lemma \ref{lem:H1-pi1} both vertical arrows are epimorphisms.
Since $\Pic(\Gbar)=0$, the group $\pi_1(G(\Cc))$ is torsion free,
and by Lemma \ref{lem:H1-pi1}  the right vertical arrow is an isomorphism.
Thus the diagram gives an exact sequence
$$
\Hom(\XX(H),\Zz)\to           \Hom(\XX(G),\Zz)\to\pi_1(X(\Cc))\to 1.
$$
From this exact sequence and Lemma \ref{lem:upicX-dual-les} we obtain an isomorphism
$$
\pi_1(X(\Cc))\cong\sH^0(\UPic(\Xbar)^D)
$$
because $\Hom(\XX(\Hbar)\tors,\Qq/\Zz)=0$ for a connected group $\Hbar$.
\medskip

\noindent\emph{Proof of (ii).} See Proposition \ref{prop:upicd-p1p2-H1} below.
\end{proof}

\begin{proposition}\label{prop:upicd-p1p2-H1}
  Let $X$ be a homogeneous space
under a connected $k$-group $G$ with $\Pic(\Gbar)=0$.
Let $\Hbar$ be a geometric stabilizer,
and assume that $\Hbar$ satisfies the condition   \eqref{eq:H1}.
 Let us fix an embedding $\kbar \into \Cc$ and an isomorphism $\pi_1(\Cc^\times)\to \Zz$.
  \begin{theoremlist}
    \item
 We have an isomorphism of abelian groups
       \begin{equation*}
         \sH^{-1}(\UPic(\Xbar)^D)  \iso \pi_2(X(\Cc))/\pi_2(X(\Cc))\tors.
       \end{equation*}
    \item
       If, moreover, $\Pic(\Hbar^\circ) = 0$, then we have
      \begin{equation*}
         \sH^{-1}(\UPic(\Xbar)^D)  \iso \pi_2(X(\Cc)).
       \end{equation*}
  \end{theoremlist}
\end{proposition}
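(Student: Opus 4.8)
The plan is to derive both parts from the long exact sequence of Lemma~\ref{lem:upicX-dual-les}, the topological Lemma~\ref{lem:H1-pi1}, and the homotopy exact sequence of a fibration of Lie groups. First I would reduce to the split situation: after base change along the fixed embedding $\kbar\into\Cc$ the space $X$ has a $\Cc$-point, so $X_\Cc=G_\Cc/H_\Cc$ with $H_\Cc$ obtained from $\Hbar$ by extension of scalars, and this base change changes none of the isomorphism classes of the abelian groups $\sH^i(\UPic(\Xbar)^D)$, $\pi_i(X(\Cc))$, $\XX(\Hbar)$, $\XX(\Gbar)$ that occur. Since $\pi_2(G(\Cc))=0$ by a theorem of \'E.~Cartan, the homotopy exact sequence of the fibration $H(\Cc)\to G(\Cc)\to X(\Cc)$ (based at $e$, using also that $G$ is connected) gives
$$
0\to\pi_2(X(\Cc))\to\pi_1(H^\circ(\Cc))\labelto{\phi}\pi_1(G(\Cc)),
$$
so $\pi_2(X(\Cc))=\ker\phi$.

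Next I would translate everything into character groups. By hypothesis $\Hbar$ satisfies condition~\eqref{eq:H1}, and the connected group $G$ satisfies it automatically, so Lemma~\ref{lem:H1-pi1} provides epimorphisms $\pi_1(H^\circ(\Cc))\onto\Hom_\Zz(\XX(\Hbar),\Zz)$ and $\pi_1(G(\Cc))\onto\Hom_\Zz(\XX(\Gbar),\Zz)$ whose kernels are the respective torsion subgroups. Moreover $\Pic(\Gbar)=0$ forces $\pi_1(G(\Cc))$ to be torsion free (because $\pi_1(G(\Cc))\tors\cong\Hom_\Zz(\Pic(\Gbar),\Qq/\Zz)$, as recalled in the introduction, cf.~\eqref{eq:RiHom}), so the $G$-side map is an \emph{isomorphism}. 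These two maps, $\phi$, and the restriction map $\Hom_\Zz(\XX(\Hbar),\Zz)\to\Hom_\Zz(\XX(\Gbar),\Zz)$ assemble into a commutative square; by Lemma~\ref{lem:upicX-dual-les} together with~\eqref{eq:RiHom}, the kernel of that restriction map is exactly $\sH^{-1}(\UPic(\Xbar)^D)$.

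The core of the argument is then a diagram chase in this square. Since $\pi_1(G(\Cc))$ is torsion free, $\phi$ annihilates $\pi_1(H^\circ(\Cc))\tors$, hence $\pi_1(H^\circ(\Cc))\tors\subseteq\ker\phi=\pi_2(X(\Cc))$ and therefore $\pi_2(X(\Cc))\tors=\pi_1(H^\circ(\Cc))\tors$. Composing $\pi_2(X(\Cc))\into\pi_1(H^\circ(\Cc))$ with $\pi_1(H^\circ(\Cc))\onto\Hom_\Zz(\XX(\Hbar),\Zz)$, I would check that the image of this composite is exactly $\ker[\Hom_\Zz(\XX(\Hbar),\Zz)\to\Hom_\Zz(\XX(\Gbar),\Zz)]=\sH^{-1}(\UPic(\Xbar)^D)$ — surjectivity onto this kernel is precisely where injectivity on the $G$-side, i.e.\ $\Pic(\Gbar)=0$, is used — and that the kernel of the composite is $\pi_1(H^\circ(\Cc))\tors=\pi_2(X(\Cc))\tors$. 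This yields the isomorphism in~(i). For~(ii), if moreover $\Pic(\Hbar^\circ)=0$, the analogous identity $\pi_1(H^\circ(\Cc))\tors\cong\Hom_\Zz(\Pic(\Hbar^\circ),\Qq/\Zz)$ shows $\pi_1(H^\circ(\Cc))$ is torsion free, so $\pi_2(X(\Cc))\tors=0$ and~(i) becomes $\sH^{-1}(\UPic(\Xbar)^D)\iso\pi_2(X(\Cc))$.

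I expect the main obstacle to be the careful handling of torsion: one must verify that $\pi_2(X(\Cc))\tors$ really equals $\pi_1(H^\circ(\Cc))\tors$ and that the induced map $\pi_2(X(\Cc))/\pi_2(X(\Cc))\tors\to\sH^{-1}(\UPic(\Xbar)^D)$ is onto and not merely injective. Both points rest on the torsion-freeness of $\pi_1(G(\Cc))$, which is exactly what the hypothesis $\Pic(\Gbar)=0$ buys; everything else in the chase is formal.
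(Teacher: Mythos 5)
Your proposal is correct and follows essentially the same route as the paper: the fibration exact sequence combined with $\pi_2(G(\Cc))=0$, Lemma~\ref{lem:H1-pi1} to identify $\Hom(\XX(\Hbar),\Zz)$ and $\Hom(\XX(\Gbar),\Zz)$ with the torsion-free quotients of $\pi_1(H^\circ(\Cc))$ and $\pi_1(G(\Cc))$, the torsion-freeness of $\pi_1(G(\Cc))$ coming from $\Pic(\Gbar)=0$, and Lemma~\ref{lem:upicX-dual-les} to conclude. The paper merely states the resulting identification of $\pi_2(X(\Cc))/\pi_2(X(\Cc))\tors$ with $\ker[\Hom(\XX(\Hbar),\Zz)\to\Hom(\XX(\Gbar),\Zz)]$ without spelling out the diagram chase that you carry out explicitly, and your handling of part~(ii) matches the paper's.
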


\begin{proof}
(i) By Lemma \ref{lem:H1-pi1} we have
$$
\Hom(\XX(\Hbar), \Zz) = \pi_1(H^\circ (\Cc))/\pi_1(H^\circ (\Cc))\tors.
$$
Since $G$ is connected and $\Pic(\Gbar)=0$, we have
$$
\Hom(\XX(\Gbar), \Zz) = \pi_1(G(\Cc)).
$$
We obtain from
the fibration exact sequence
\begin{equation}\label{eq:homotopy-sequence1}
  0  \to \pi_2(X(\Cc)) \to \pi_1(H^\circ(\Cc)) \to \pi_1(G(\Cc))\to\pi_1(X(\Cc))\to\pi_0(H(\Cc))\to 1
\end{equation}
that
$$
\pi_2(X(\Cc))/\pi_2(X(\Cc))\tors=\ker[\Hom(\XX(\Hbar),\Zz) \to \Hom(\XX(\Gbar), \Zz)],
$$
hence $\sH^{-1}(\UPic(\Xbar)^D)  \iso \pi_2(X(\Cc))/\pi_2(X(\Cc))\tors$
by Lemma~\ref{lem:upicX-dual-les}.

(ii) If $\Pic(\Hbar^\circ ) = 0$, then $\pi_1(H^\circ (\Cc))$ is torsion free,
hence so is $\pi_2(X(\Cc))$ by the fibration exact sequence \eqref{eq:homotopy-sequence1}.
\end{proof}

Note that some condition on stabilizers in Theorem \ref{th:upicd-p1p2} must be imposed,
as shown by the following example:

\begin{example}\label{ex:counter-ex-H1}
Take $G={\rm SL}_{n,k}$, take $T$ to be the group of diagonal matrices in $G$,
and take $H=N$ to be  the normalizer of $T$ in $G$.
We have $H^\circ =T$.
Set $X=H\backslash G$.
Then
$$
\pi_2(X(\Cc))/\pi_2(X(\Cc))\tors=\pi_1(H^\circ(\Cc))/\pi_1(H^\circ(\Cc))\tors=\pi_1(T(\Cc))\cong\Zz^{n-1}.
$$
On the other hand, it follows from the next Lemma \ref{lem:N} that
$\Hom(\XX(\Hbar),\Zz)=0$, hence $\sH^{-1}(\UPic(\Xbar)^D)=0$.
\end{example}

\begin{lemma}[well-known]\label{lem:N}
Let $G={\rm SL}_{n,k}$, where $n\ge 2$,
let $T$ be the group of diagonal matrices in $G$,
and let $N$ to be  the normalizer of $T$ in $G$.
Then $\XX(\Nbar)\cong\Zz/2\Zz$.
 \end{lemma}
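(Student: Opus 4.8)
The plan is to study $\XX(\Nbar)$ through the exact sequence of $\kbar$-groups
\[
1 \to \Tbar \to \Nbar \to W \to 1,
\]
where $W = \Nbar/\Tbar$ is the Weyl group of $(\Gbar,\Tbar)$; since $\Gbar = {\rm SL}_{n,\kbar}$ and $\Tbar$ is the diagonal torus, $W \cong S_n$, acting on $\Tbar$ by permuting the diagonal entries. The idea is that a character of $\Nbar$ is automatically forced to be trivial on $\Tbar$, so that $\XX(\Nbar)$ reduces to the character group of the finite group $S_n$.

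First I would show that every $\chi \in \XX(\Nbar)$ is trivial on $\Tbar$. Since $\Gm$ is commutative, $\chi(ntn^{-1}) = \chi(t)$ for all $t \in \Tbar(\kbar)$ and $n \in \Nbar(\kbar)$; as conjugation by $n$ realizes the $W$-action on $\Tbar$, this says $\chi|_{\Tbar} \in \XX(\Tbar)^W$. Then I would compute $\XX(\Tbar)^W$ using the standard identification $\XX(\Tbar) = \Zz^n/\Zz\cdot(1,\dots,1)$ with $S_n$ permuting coordinates: if the class of $(a_1,\dots,a_n)$ is $S_n$-invariant, then for each $\sigma$ one has $\sigma(a_1,\dots,a_n) = (a_1,\dots,a_n) + c_\sigma(1,\dots,1)$ with $c_\sigma \in \Zz$; comparing the sums of the coordinates forces $n c_\sigma = 0$, so $c_\sigma = 0$, hence $(a_1,\dots,a_n)$ is genuinely $S_n$-fixed, hence proportional to $(1,\dots,1)$, hence its class is $0$. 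Thus $\XX(\Tbar)^W = 0$ and $\chi|_{\Tbar} = 1$.

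It follows that $\chi$ factors through $W \cong S_n$, and since $\charact k = 0$ makes $W$ a constant finite $\kbar$-group scheme, $\XX(\Nbar) = \Hom(S_n, \kbar^\times)$. Finally, as the abelianization of $S_n$ is $\Zz/2\Zz$ (the sign character) for every $n \ge 2$, and $\kbar^\times$ has a unique element of order $2$, I would conclude $\XX(\Nbar) = \Hom(\Zz/2\Zz, \kbar^\times) \cong \Zz/2\Zz$.

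The whole argument is elementary and I do not expect a genuine obstacle. The one place to be slightly careful is the computation $\XX(\Tbar)^W = 0$: invariance of a \emph{class} in the quotient lattice $\Zz^n/\Zz\cdot(1,\dots,1)$ is a priori weaker than pointwise invariance of a representative tuple, which is exactly why the coordinate-sum bookkeeping is needed to upgrade it.
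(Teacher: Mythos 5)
Your proof is correct, but it reaches the key intermediate step --- that every character of $\Nbar$ is trivial on $\Tbar$ --- by a genuinely different route from the paper. The paper argues group-theoretically: taking $t=\diag(z,z^{-1},1,\dots,1)$ and $s$ a representative of the transposition $(1,2)$, it computes $sts^{-1}=t^{-1}$, so $tst^{-1}s^{-1}=t^{2}$, and since every element of $\kbar^\times$ is a square these commutators generate all of $\Tbar$; thus $\Tbar\subset \Nbar\der$ and a fortiori every character kills $\Tbar$. You instead observe that $\chi|_{\Tbar}$ must lie in $\XX(\Tbar)^{W}$ because $\Gm$ is commutative, and then show $\XX(\Tbar)^{W}=0$ by the coordinate-sum bookkeeping in $\Zz^n/\Zz\cdot(1,\dots,1)$ --- and you are right to flag that invariance of a class is a priori weaker than invariance of a representative; your argument handles this correctly. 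The paper's computation proves the slightly stronger structural fact $\Tbar\subset\Nbar\der$, while your argument is softer and generalizes immediately to the normalizer of a maximal torus in any semisimple group (where $\XX(\Tbar)\otimes\Qq$ is spanned by roots and so has no nonzero $W$-invariants), reducing $\XX(\Nbar)$ to $\XX(W)$ in that generality. From the factorization through $S_n$ onward the two proofs coincide: both use that the abelianization of $S_n$ is $\Zz/2\Zz$ for $n\ge 2$.
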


\begin{proof}
Let $t=\diag(z,z^{-1},1,\dots, 1)\in T(\kbar)$ where $z\in\kbar^\times$,
and let $s\in N(\kbar)$ be a representative of the transposition $(1,2)\in S_n=N/T$.
Then $sts^{-1}=t^{-1}$, hence $tst^{-1}s^{-1}=t^2=\diag(z^2,z^{-2},1,\dots, 1)$.
It follows easily that $T\subset N\der$, where $N\der$ denotes the derived group of $N$.
We see that $\XX(\Nbar)=\XX(\Nbar/\Tbar)=\XX(S_n)$.
Since $(S_n)\der=A_n$, we see that $\XX(S_n)=\XX(S_n/A_n)\cong \Zz/2\Zz$,
hence $\XX(\Nbar)\cong\Zz/2\Zz$.
\end{proof}

\section{The elementary obstruction}
\begin{subsec}
Recall that in \cite[Def.~2.10]{BvH07} the
\emph{elementary obstruction}
(to the existence of a $k$-point in a
 smooth geometrically integral $k$-variety $X$)
was defined as the class $e(X)\in\Ext^1(\UPic(\Xbar),\kbar^\times)$
associated to the extension
of complexes of Galois modules
$$
0\to\kbar^\times\to\left({\sK(\Xbar)}^\times\to \Div(\Xbar)\right)\to
       \left({\sK(\Xbar)}^\times/\kbar^\times\to \Div(\Xbar)\right)\to 0.
$$
It is a variant of the original elementary obstruction  ${\rm ob}(X)$ of
Colliot-Th\'el\`ene and Sansuc \cite[D\'ef.\ 2.2.1]{CT-Sansuc} which
lives in $\Ext^1({\sK(\Xbar)}^\times/\kbar^\times, \kbar^\times)$.
In fact, we have a canonical injection
$\Ext^1(\UPic(\Xbar),\kbar^\times) \to
\Ext^1({\sK(\Xbar)}^\times/\kbar^\times, \kbar^\times)$ which sends
$e(X)$ to ${\rm ob}(X)$
(see \cite[Lemma 2.12]{BvH07}).
\end{subsec}

\begin{subsec}\label{subsec:tor-and mult}
Recall that $\Hbar\mult$ denotes the largest quotient group of $\Hbar$
that is a group of multiplicative type.
We have $\Chi(\Hbar\mult)=\Chi(\Hbar)$.
The Galois action on $\Chi(\Hbar)$ defines a $k$-form $H\mm$ of $\Hbar\mult$.
Since we have a morphism of Galois modules $\Chi(\Gbar)\to\Chi(\Hbar)$,
we obtain a $k$-homomorphism  $i_*\colon H\mm\to G\tor$
(related to the embedding $i\colon \Hbar\into\Gbar$).

Set $G\ssu=\ker[G\to G\tor]$, it is an extension of a semisimple group by a unipotent group.
Set $\Hbar_1=\ker[\Hbar\to \Hbar\mult]$.
The embedding $\Hbar\into \Gbar$ gives a homomorphism $\Hbar\to\Gbar\tor$,
and $\Hbar_1$ is contained in the kernel of this homomorphism
(because the restriction of any character of $\Hbar$ to $\Hbar_1$ is trivial).
Thus $\Hbar_1\subset \Gbar\ssu$.
It is clear that $i_*\colon H\mm\to G\tor$ is an embedding if and only if
$\Hbar_1=\Hbar\cap\Gbar\ssu$.
\end{subsec}

\begin{proposition}\label{cor:comp-ext1-upic-gm}
    Let $X$ be a homogeneous space under a connected $k$-group $G$ with
   $\Pic(\Gbar) = 0$. Let $\Hbar$ be a geometric stabilizer.
  For every integer $i$  we have  a canonical isomorphism
  \begin{equation*}
    \Ext^i(\UPic(\Xbar), \kbar^\times) \iso
    H^i(k, \langle H\mm \to G\tor]),
  \end{equation*}
 which is functorial in $G$ and $X$.
\end{proposition}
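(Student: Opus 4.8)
The plan is to deduce the proposition formally from Theorem~\ref{th:upic-hom-space} by applying a duality functor. By that theorem we have a canonical isomorphism $\UPic(\Xbar)\iso[\Chi(\Gbar)\labelto{\res}\Chi(\Hbar)\rangle$ in the derived category of discrete Galois modules, functorial in $G$ and $X$; hence it suffices to identify $\Ext^i([\Chi(\Gbar)\to\Chi(\Hbar)\rangle,\kbar^\times)$ for all $i$.

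First I would use the standard fact that, in the derived category of discrete Galois modules, $R\Hom(A\updot,\kbar^\times)=R\Gamma(k,R\Hom_\Zz(A\updot,\kbar^\times))$ for a bounded complex $A\updot$ of finitely generated Galois modules (the Grothendieck spectral sequence for the composite functor $\Gamma(k,\hvar)\circ\Hom_\Zz(\hvar,\kbar^\times)$), so that $\Ext^i(A\updot,\kbar^\times)=H^i(k,R\Hom_\Zz(A\updot,\kbar^\times))$. The task then reduces to computing the complex of Galois modules $R\Hom_\Zz([\Chi(\Gbar)\to\Chi(\Hbar)\rangle,\kbar^\times)$. Since $\charact k=0$, the abelian group $\kbar^\times$ is divisible, hence injective as a $\Zz$-module; therefore $\sExt^q_\Zz(\hvar,\kbar^\times)=0$ for $q>0$, no resolution is needed on the target side, and $R\Hom_\Zz(\hvar,\kbar^\times)$ is computed termwise on the bounded complex $[\Chi(\Gbar)\to\Chi(\Hbar)\rangle$ of finitely generated modules. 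This produces the two-term complex $\Hom_\Zz(\Chi(\Hbar),\kbar^\times)\to\Hom_\Zz(\Chi(\Gbar),\kbar^\times)$ with $\Hom_\Zz(\Chi(\Hbar),\kbar^\times)$ in degree $-1$ and $\Hom_\Zz(\Chi(\Gbar),\kbar^\times)$ in degree $0$.

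It remains to identify the terms by Cartier duality. By definition $G\tor$ (resp.\ $H\mm$) is the $k$-group of multiplicative type with character group $\Chi(\Gbar)$ (resp.\ $\Chi(\Hbar)$), so $\Hom_\Zz(\Chi(\Gbar),\kbar^\times)=G\tor(\kbar)$ and $\Hom_\Zz(\Chi(\Hbar),\kbar^\times)=H\mm(\kbar)$ canonically as Galois modules, the map induced by $\res$ being $i_*$. Hence $R\Hom_\Zz(\UPic(\Xbar),\kbar^\times)\iso\langle H\mm(\kbar)\to G\tor(\kbar)]$ in the derived category of Galois modules, and taking hypercohomology yields $\Ext^i(\UPic(\Xbar),\kbar^\times)\iso H^i(k,\langle H\mm\to G\tor])$. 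Functoriality in $G$ and $X$ is inherited from Theorem~\ref{th:upic-hom-space}, from the contravariant functoriality of $\Hom_\Zz(\hvar,\kbar^\times)$, and from Cartier duality. The only point requiring care is that every identification above be a canonical morphism (of complexes, or in the derived category), not merely an abstract isomorphism of cohomology groups, so that the final isomorphism is genuinely canonical and functorial; there is no geometric obstacle, the whole content residing in Theorem~\ref{th:upic-hom-space}.
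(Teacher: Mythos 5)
Your proposal is correct and follows essentially the same route as the paper: apply Theorem \ref{th:upic-hom-space}, convert $\Ext^i(\hvar,\kbar^\times)$ into Galois hypercohomology of $R\sHom_\Zz(\hvar,\kbar^\times)$, compute the latter termwise using divisibility of $\kbar^\times$, and conclude by Cartier duality. The one step you dispatch as a ``standard fact'' --- that $\Ext^i(A\updot,\kbar^\times)=H^i(k,R\sHom_\Zz(A\updot,\kbar^\times))$ in the category of discrete Galois modules, where projectives are unavailable --- is where the paper does its only real work: it is obtained by resolving $\UPic(\Xbar)$ by a bounded complex of finitely generated torsion-free Galois modules (Lemma \ref{lem:complexes}(i) together with \cite[Lem.~1.5]{BvH07}) and checking independence of the resolution via Lemma \ref{lem:complexes}(ii), rather than by the composite-functor spectral sequence you invoke.
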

\noindent

We need a lemma.

\begin{lemma}\label{lem:complexes}
Let $\ggg$ be a profinite group.
Let $\Ybul$ be a bounded complex of discrete $\ggg$-modules with finitely generated (over $\Zz$) cohomology.

(i) There exists a quasi-isomorphism $\psi\colon \Mbul \to \Ybul$,
where $\Mbul$ is a bounded complex  of finitely generated
(over $\Zz$) torsion free  $\ggg$-modules.

(ii) For any two such resolutions (quasi-isomorphisms)
 $\psi_1\colon \Mbul_1 \to \Ybul$ and  $\psi_2\colon \Mbul_2 \to \Ybul$ of $\Ybul$,
there exists a third such complex $\Mbul_3$ and quasi-isomorphisms   $\mu_j\colon \Mbul_3 \to \Mbul_j$ ($j=1,2$),
such that the following diagram commutes up to a homotopy:
$$
\xymatrix{
M_3\updot\ar[r]^{\mu_1}\ar[d]_{\mu_2}    &\Mbul_1\ar[d]^{\psi_1}\\
\Mbul_2\ar[r]^{\psi_2}                         &\Ybul.
}
$$
\end{lemma}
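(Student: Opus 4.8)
The plan is to prove part~(i) and to deduce part~(ii) from it by a homotopy fibre product; the substance is in~(i). For~(i) I would resolve $\Ybul$ from above by finitely generated $\ggg$-lattices (i.e.\ finitely generated torsion-free discrete $\ggg$-modules). The inputs are elementary: for every open normal subgroup $U\subseteq\ggg$ the permutation module $\Zz[\ggg/U]$ is a finitely generated $\ggg$-lattice; every finitely generated discrete $\ggg$-module $A$ is a quotient of a finite sum $\bigoplus_j\Zz[\ggg/U_j]$, obtained by picking finitely many generators $a_j$, open normal $U_j$ fixing $a_j$, and sending $gU_j\mapsto ga_j$; and a submodule of a finitely generated $\ggg$-lattice is again one, since $\Zz$ is Noetherian and torsion-freeness passes to submodules. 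Granting this, write $\Ybul$ in degrees $[c,d]$ and build, by descending induction on the degree in the standard way (the construction of a resolution relative to a class of objects with ``enough epimorphisms''), a complex $P\updot$ of finitely generated $\ggg$-lattices with a quasi-isomorphism $P\updot\to\Ybul$. At the step producing $P^i$ for $i\ge c$ one covers a finitely generated module --- a syzygy module built from kernels of the maps already constructed, together with lifts of the finitely many generators of $H^i(\Ybul)$ --- by a finite sum of modules $\Zz[\ggg/U_j]$, choosing each $U_j$ small enough to fix the generator being lifted and an auxiliary element of $Y^i$ needed to extend the partial chain map into $\Ybul$; here finiteness of $H^i(\Ybul)$ and discreteness of $Y^i$ enter.

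The delicate point --- and the one I expect to require the most care --- is to make $P\updot$ \emph{bounded}, not merely bounded above: since a $\ggg$-lattice need not be a projective object in the category of discrete $\ggg$-modules, the naive resolution does not terminate. The remedy is that once the induction reaches degree $c$, the module to be covered in degree $c-1$ is the $\ggg$-lattice $Z=\ker(d^c_P)\cap\ker(\phi^c_P)\subseteq P^c$ (which is $\ker(d^c_P)$ when $H^c(\Ybul)=0$), and, as $Y^{c-1}=0$, the partial chain map is forced to vanish in degree $c-1$, which is automatically consistent: the elements involved are degree-$c$ cocycles of $\Ybul$ vanishing in $H^c(\Ybul)$, hence coboundaries, hence zero. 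So one sets $P^{c-1}:=Z$ with $d^{c-1}_P$ the inclusion and stops. Then $P\updot$ lives in degrees $[c-1,d]$ with finitely generated torsion-free terms, and the bookkeeping built into the inductive hypothesis shows $P\updot\to\Ybul$ is a quasi-isomorphism; this $P\updot$ is the required $\Mbul$. The main work in a full write-up will be a clean formulation of that inductive hypothesis, one that both drives the construction and justifies the termination.

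For part~(ii) I would reduce to~(i). Given $\psi_1\colon\Mbul_1\to\Ybul$ and $\psi_2\colon\Mbul_2\to\Ybul$, form the homotopy fibre product $E\updot$ with $E^i=M_1^i\oplus M_2^i\oplus Y^{i-1}$ and differential $(m_1,m_2,y)\mapsto(dm_1,\,dm_2,\,\psi_1(m_1)-\psi_2(m_2)-dy)$. The projections $\pi_j\colon E\updot\to\Mbul_j$ are chain maps, $(m_1,m_2,y)\mapsto y$ is a homotopy from $\psi_1\pi_1$ to $\psi_2\pi_2$, and the short exact sequence of complexes $0\to\Ybul[-1]\to E\updot\to\Mbul_1\oplus\Mbul_2\to 0$ together with its long exact cohomology sequence shows, using that $\psi_1$ and $\psi_2$ are quasi-isomorphisms, that $\pi_1$ and $\pi_2$ are quasi-isomorphisms. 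Thus $E\updot$ is a bounded complex with finitely generated cohomology, so part~(i) applied to $E\updot$ produces a quasi-isomorphism $q\colon\Mbul_3\to E\updot$ with $\Mbul_3$ a bounded complex of finitely generated torsion-free $\ggg$-modules; then $\mu_j:=\pi_j\circ q$ are quasi-isomorphisms, and precomposing the homotopy $\psi_1\pi_1\simeq\psi_2\pi_2$ with $q$ gives the required square commuting up to homotopy.
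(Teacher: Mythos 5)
Your proposal is correct and follows essentially the same route as the paper: part (i) is the same descending induction using permutation modules $\Zz[\ggg/U]$ for open subgroups $U$, with the same termination trick at the bottom degree (a $\ggg$-submodule of a finitely generated lattice is again one), the paper merely packaging the induction as iterated cones rather than as a directly constructed resolution; and your homotopy fibre product $E\updot$ in part (ii) is literally the paper's fibre $N\updot=[\Mbul_1\oplus\Mbul_2\to\Ybul\rangle$, to which part (i) is then applied in the same way. The only cosmetic difference is that the paper checks $\lambda_1$ is a quasi-isomorphism via the short exact sequence $0\to[\Mbul_2\to\Ybul\rangle\to N\updot\to\Mbul_1\to 0$ rather than via your sequence $0\to\Ybul[-1]\to E\updot\to\Mbul_1\oplus\Mbul_2\to 0$; both work.
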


\begin{proof}
Assume that $Y^i=0$ for $i>n$.
We choose a finite set of generators $h_1,\dots,h_\kappa$ (over $\Zz$) of $\sH^n(\Ybul)$.
We lift each $h_j$ for $j=1,\dots,\kappa$ to $y_j\subset\ker[Y^n\to Y^{n+1}]$.
Let $\hh_j\subset \ggg$ denote the stabilizer of $y_j$ in $\ggg$,
it is an open subgroup (hence of finite index) in $\ggg$.
We have a canonical $\ggg$-morphism $\Zz[\ggg/\hh_j]\to \ker[Y^n\to Y^{n+1}]$
taking the image of the unit element $e\in\ggg$ in $\ggg/\hh_j$ to $y_j$.
Set $A^n=\bigoplus_j \Zz[\ggg/\hh_j]$,
then $A^n$ is a finitely generated (over $\Zz$) torsion free $\ggg$-module.
We have a morphism of $\ggg$-modules $A^n\to \ker[Y^n\to Y^{n+1}]$
such that the induced morphism $A^n\to \sH^n(\Ybul)$ is surjective.
We consider the complex $A^n[-n]$ (with one $\ggg$-module $A^n$ in degree $n$).
We have a morphism of complexes $\varphi\colon A^n[-n]\to \Ybul$.
We set $Y_{(1)}\updot=\langle A^n[-n]\to \Ybul]$ (the cone of $\varphi$).
It is easy to see that $\sH^n(Y_{(1)}\updot)=0$.
Then we apply this procedure to $Y_{(1)}\updot$ for  $n-1$
to obtain $Y_{(2)}\updot$  with $\sH^{n-1}(Y_{(2)}\updot)=0$, and so on.

Assume that $Y^i=0$ for $i\le n-m$ for some integer $m>0$.
Then $Y^i_{(m)}=0$ for $i<n-m$, and $Y^{n-m}_{(m)}$ is finitely generated and torsion free.
By construction $\sH^i(\Ybul_{(m)})=0$ for $i>n-m$.
Moreover, since $Y^{n-m-1}_{(m)}=0$, we have
$$
\sH^{n-m}(\Ybul_{(m)})=\ker[Y^{n-m}_{(m)}\to Y^{n-m+1}_{(m)}].
$$
Set $A'=\sH^{n-m}(\Ybul_{(m)})$, then $A'$ is finitely generated and torsion free,
because it is a subgroup of the finitely generated torsion free abelian group $Y^{n-m}_{(m)}$.
We have an {\em injective} morphism of $\ggg$-modules $A'\into Y^{n-m}_{(m)}$
and a morphism of complexes $\varphi'\colon A'[n-m]\to \Ybul_{(m)}$.
As before, set $\Ybul_{(m+1)}=\langle A'[n-m]\to \Ybul_{(m)}]$ (the cone of $\varphi'$).
One can easily see that the complex $Y_{(m+1)}\updot$ is acyclic.

One can check that $Y_{(m+1)}\updot$ is the cone of some morphism of complexes $\psi\colon \Mbul\to \Ybul$,
where $\Mbul$ is a bounded complex of finitely generated torsion free $\ggg$-modules.
Since the cone  $Y_{(m+1)}\updot$  of $\psi$ is acyclic, we see that $\psi$ is a quasi-isomorphism.

(ii) Let $N\updot:=[M_1\updot\oplus M_2\updot\labelTo{\psi_1-\psi_2} \Ybul\rangle$ (the fibre of $\psi_1-\psi_2$),
then we have morphisms
$$
\lambda_j\colon N\updot\to \Mbul_1\oplus\Mbul_2\to\Mbul_j,\ j=1,2.
$$
From the short exact sequence of complexes
$$0\to [\Mbul_2\to\Ybul\rangle\to N\updot\labelto{\lambda_1}\Mbul_1\to 0$$
where $[\Mbul_2\to\Ybul\rangle$ is acyclic because $\psi_2\colon \Mbul_2\to\Ybul$ is a quasi-isomorphism,
we see that $\lambda_1$ is a quasi-isomorphism, and similarly $\lambda_2$ is a quasi-isomorphism.
An easy calculation shows that the following diagram commutes up to a homotopy:
$$
\xymatrix{
N\updot\ar[r]^{\lambda_1}\ar[d]_{\lambda_2}    &\Mbul_1\ar[d]^{\psi_1}\\
\Mbul_2\ar[r]^{\psi_2}                         &\Ybul.
}
$$
Now we apply (i) to the complex $N\updot$ and obtain a quasi-isomorphism $\varkappa\colon\Mbul_3\to N\updot$, where
$\Mbul_3$ is a bounded complex  of finitely generated (over $\Zz$) torsion free  $\ggg$-modules.
We set $\mu_j=\lambda_j\circ\varkappa\colon \Mbul_3 \to M_j\updot$.
\end{proof}

\begin{proof}[Proof of Proposition \ref{cor:comp-ext1-upic-gm}]
Since the complex $\UPic(\Xbar)$ is bounded and, for  $X$ as in the proposition,  has finitely generated cohomology,
by Lemma \ref{lem:complexes}(i) there is a bounded resolution
$\psi\colon\Mbul\to \UPic(\Xbar)$ consisting of finitely generated torsion free Galois modules.
We have a canonical isomorphism
  \begin{equation*}
  \psi^*\colon  \Ext^i(\UPic(\Xbar), \kbar^\times) \isoto
    \Ext^i(\Mbul, \kbar^\times).
  \end{equation*}
It is well known (see for example~\cite[Lem.~1.5]{BvH07})
that there is a canonical isomorphism
\begin{equation*}
  \Ext^i(\Mbul,\kbar^\times)\isoto H^i(k,\Hombul_\Zz(\Mbul,\kbar^\times)),
\end{equation*}
hence we obtain an isomorphism
  \begin{equation*}
    \Ext^i(\UPic(\Xbar), \kbar^\times) \isoto
   H^i(k, R\sHom_\Zz(\UPic(\Xbar), \kbar^\times)),
 \end{equation*}
which does not depend on the choice of the resolution
$\psi\colon\Mbul\to \UPic(\Xbar)$ by Lemma \ref{lem:complexes}(ii).
It now follows from Theorem~\ref{th:upic-hom-space} that we have a canonical isomorphism
  \begin{equation*}
    \Ext^i(\UPic(\Xbar), \kbar^\times) \iso
   H^i(k, R\sHom_\Zz([\Chi(\Gbar) \to \Chi(\Hbar) \rangle, \kbar^\times)).
 \end{equation*}
Since $\kbar^\times$ is divisible,
\begin{align*}
 R\sHom_\Zz([\Chi(\Gbar) \to \Chi(\Hbar) \rangle, \kbar^\times) &=
\langle \Hom_\Zz(\Chi(\Hbar), \kbar^\times)\to \Hom_\Zz(\Chi(\Gbar), \kbar^\times)]\\
&=\langle H\mm(\kbar) \to G\tor(\kbar)].
\end{align*}
Thus  we obtain a canonical isomorphism
  \begin{equation*}
    \Ext^i(\UPic(\Xbar), \kbar^\times) \iso
    H^i(k, \langle H\mm \to G\tor]).
  \end{equation*}
\end{proof}

\begin{subsec}\label{subsec:cohomological-obstruction}
In \cite{Borovoi:hasse-homogeneous} the first-named author defined (by
means of explicit cocycles) an obstruction class
to the existence of a rational point on $X$.
We recall the definition.

Fix $\xbar\in X(\kbar)$.
Let $\Hbar\subset G_\kbar$ be the stabilizer of $\xbar$.
For each $\sigma\in\Gal(\kbar/k)$ choose $g_\sigma\in G(\kbar)$
such that ${}^\sigma\xbar=\xbar\cdot g_\sigma$.
We may assume that the map $\sigma\mapsto g_\sigma$
is continuous (locally constant) on $\Gal(\kbar/k)$.
We denote by $\hat g_\sigma\in G\tor(\kbar)$ the image of $g_\sigma$ in $G\tor(\kbar)$.
We obtain a continuous map $\hat g\colon\sigma\mapsto \hat g_\sigma$.

Let $\sigma,\tau\in \Gal(\kbar/k)$.
Set $u_{\sigma,\tau}=g_{\sigma\tau}(g_\sigma{}^\sigma g_\tau)^{-1}\in G(\kbar)$,
then it is easy to check that $u_{\sigma, \tau}\in \Hbar(\kbar)$.
We denote by $\hat u_{\sigma, \tau}\in H\mm (\kbar)$ the image of $ u_{\sigma, \tau}$ in $H\mm (\kbar)$ .
We obtain a continuous map $\hat u\colon (\sigma,\tau)\mapsto \hat u_{\sigma, \tau}$.
One can check that  $(\hat u, \hat g)\in Z^1(k, H\mm\to G\tor)$.
We denote by $\eta(G,X)\in H^1(H\mm\to G\tor)$ the hypercohomology class of the hypercocycle $(\hat u, \hat g)$,
see \cite{Borovoi:hasse-homogeneous} for details.
\end{subsec}

\begin{theorem}\label{thm:eta(G,X)}
 Let $X$ be a homogeneous space under a connected $k$-group $G$ with
 $\Pic(\Gbar) = 0$.
Let $\Hbar$ be a geometric stabilizer.
Then $e(X)\in\Ext^1(\UPic(\Xbar),\kbar^\times)$ coincides with
$-\eta(G,X) \in  H^1(k, \langle H\tor \to G\tor])$ under the
identification
$$
 \Ext^1(\UPic(\Xbar), \kbar^\times) \iso
    H^1(k, \langle H\mm \to G\tor])
$$
of Proposition~\ref{cor:comp-ext1-upic-gm}.
\end{theorem}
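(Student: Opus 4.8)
The plan is to make the canonical identification of Proposition~\ref{cor:comp-ext1-upic-gm} completely explicit at the level of (hyper)cocycles and to transport through it the extension class $e(X)$, arriving at the explicit $1$-hypercocycle $(\hat u,\hat g)$ of \S\ref{subsec:cohomological-obstruction} up to sign. Recall that $e(X)$ is the class in $\Ext^1(\UPic(\Xbar),\kbar^\times)$ of the extension of complexes of Galois modules
$$
0\to\kbar^\times\to \KDiv(\Xbar)\to\UPic(\Xbar)\to 0 ,
$$
so the task splits into two parts: transport this extension along the isomorphism $\UPic(\Xbar)\iso[\Chi(\Gbar)\labelto{\res}\Chi(\Hbar)\rangle$ of Theorem~\ref{th:upic-hom-space}, and then along the dualisation $R\sHom_\Zz(\hvar,\kbar^\times)$ used in the proof of Proposition~\ref{cor:comp-ext1-upic-gm}; finally one evaluates the resulting class in $H^1(k,\langle H\mm\to G\tor])$ against the definition of $\eta(G,X)$.

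For the first part I would introduce an \emph{equivariant elementary obstruction}. Since $\kbar^\times$ (the constants, in bidegree $(0,0)$) is a subcomplex of $\tau_{\leq 1}\Calg^*(\Gbar,\KDiv(\Xbar))$ with quotient $\UPic_G(\Xbar)$, we obtain an extension $0\to\kbar^\times\to\tau_{\leq 1}\Calg^*(\Gbar,\KDiv(\Xbar))\to\UPic_G(\Xbar)\to 0$, and the canonical projection of $\Calg^*(\Gbar,\KDiv(\Xbar))$ onto its column $\KDiv(\Xbar)$ of $0$-cochains yields a morphism of extensions over $\nu\colon\UPic_G(\Xbar)\to\UPic(\Xbar)$ that is the identity on $\kbar^\times$. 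Feeding this into the zigzag
$$
[\Chi(\Gbar)\labelto{\res}\Chi(\Hbar)\rangle \xleftarrow{\varepsilon} \langle\Chi(\Gbar)[-1]\to\UPic_G(\Xbar)] \xrightarrow{\lambda} \UPic(\Xbar)
$$
that defines the isomorphism of Theorem~\ref{th:upic-hom-space} (via Theorem~\ref{thm:UG(Xbar)=0}, i.e.\ Corollary~\ref{cor:H0-B} and Construction~\ref{con:H0-B} applied to the exact sequence of Lemma~\ref{lem:upicg-decomp}, together with Propositions~\ref{cor:cocycle} and~\ref{prop:picg-chi-h}), I would produce an explicit extension of $[\Chi(\Gbar)\to\Chi(\Hbar)\rangle$ by $\kbar^\times$ representing $e(X)$, using Popov's isomorphism $\pi_\xbar\colon\Pic_G(\Xbar)\iso\Chi(\Hbar)$ to rewrite $\sH^1(\UPic_G(\Xbar))$ as $\Chi(\Hbar)$. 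Note that the cone/fibre conventions carry built-in signs (e.g.\ $\langle P\to Q]$ has differential $-f$, and $\sigma$ in Construction~\ref{con:H0-B} has a sign); this is where I expect the overall sign of the statement to originate.

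For the dualisation, choose a bounded resolution $\psi\colon\Mbul\to[\Chi(\Gbar)\to\Chi(\Hbar)\rangle$ by finitely generated torsion-free Galois modules as in Lemma~\ref{lem:complexes} --- one may keep $\Chi(\Gbar)$ in degree $0$, being already torsion-free, and resolve only $\Chi(\Hbar)$ --- and apply $\Hombul_\Zz(\hvar,\kbar^\times)$. Since $\kbar^\times$ is divisible, the computation in the proof of Proposition~\ref{cor:comp-ext1-upic-gm} identifies $\Hombul_\Zz(\Mbul,\kbar^\times)$ in the derived category with $R\sHom_\Zz([\Chi(\Gbar)\to\Chi(\Hbar)\rangle,\kbar^\times)=\langle H\mm(\kbar)\to G\tor(\kbar)]$, so the transported extension becomes a class in $H^1(k,\langle H\mm\to G\tor])$. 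Representing it by an actual $1$-hypercocycle amounts to choosing a $\kbar$-linear (not Galois-equivariant) splitting of the extension and measuring its failure to be Galois-equivariant; this produces a pair $(a,b)$ with $b\in C^1(k,G\tor(\kbar))$ and $a\in C^2(k,H\mm(\kbar))$.

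It remains to identify $(a,b)$ with $-(\hat u,\hat g)$. The data $g_\sigma$ and $u_{\sigma,\tau}$ of \S\ref{subsec:cohomological-obstruction} enter precisely because Popov's identification $\Pic_G(\Xbar)=\Chi(\Hbar)$ depends on the chosen geometric point $\xbar$: by Lemma~\ref{lem:base-point} the identifications attached to $\xbar$ and to ${}^\sigma\xbar=\xbar g_\sigma$ differ by conjugation by $g_\sigma$, so once $\xbar$ is fixed to trivialise all the $\kbar$-structures, the Galois action on the dualised complex picks up exactly $\hat g_\sigma\in G\tor(\kbar)$, while the non-multiplicativity $u_{\sigma,\tau}=g_{\sigma\tau}(g_\sigma{}^\sigma g_\tau)^{-1}$ of $\sigma\mapsto g_\sigma$ produces the $2$-cochain $\hat u_{\sigma,\tau}\in H\mm(\kbar)$; matching these against the chosen splitting gives the claim. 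The main obstacle is exactly this last bookkeeping --- controlling the several superposed quasi-isomorphisms (the zigzag of Theorem~\ref{th:upic-hom-space} through $\UPic_G$, Popov's isomorphism, and the torsion-free resolution of Lemma~\ref{lem:complexes}) simultaneously at the level of explicit cochains and, above all, pinning down the overall sign. As a convenient place to fix the sign convention one can specialise to $G=T$ a torus and $X$ a $T$-torsor, where $\Hbar=1$, $H\mm=1$, $\UPic(\Xbar)\cong\Chi(\Tbar)$, and both $e(X)$ and $\eta(T,X)$ reduce to (signed) incarnations of the class of $X$ in $\Ext^1(\Chi(\Tbar),\kbar^\times)=H^1(k,T)$, represented by the cocycle $\sigma\mapsto g_\sigma$.
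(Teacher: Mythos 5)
Your overall strategy --- making the identification of Proposition~\ref{cor:comp-ext1-upic-gm} explicit on cocycles and pushing the extension class $e(X)$ through it --- is not the route the paper takes, and as written it has a genuine gap precisely at the step you defer to ``bookkeeping''. Everything up to the construction of an extension of $[\Chi(\Gbar)\to\Chi(\Hbar)\rangle$ by $\kbar^\times$ representing $e(X)$, and its conversion into a class in $H^1(k,\langle H\mm\to G\tor])$, is plausible and could be carried out. But the entire content of the theorem is the final identification of the resulting $1$-hypercocycle with $-(\hat u,\hat g)$: the assertion that the base-point dependence of Popov's isomorphism (Lemma~\ref{lem:base-point}) contributes exactly $\hat g_\sigma$, and that the defect $u_{\sigma,\tau}=g_{\sigma\tau}(g_\sigma{}^\sigma g_\tau)^{-1}$ contributes exactly $\hat u_{\sigma,\tau}$, is stated rather than proved, and this is where the several superposed quasi-isomorphisms (the zigzag through $\UPic_G(\Xbar)$, Popov's isomorphism, the torsion-free resolution of Lemma~\ref{lem:complexes}) must all be controlled simultaneously at the cochain level. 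Your proposal to pin down the overall sign by specialising to a torus-torsor does not repair this: in a direct computation the sign is whatever the computation produces in each instance, and checking one example determines it for all $(G,X)$ only if you have already shown that the computation yields $\pm(\hat u,\hat g)$ with a sign independent of $(G,X)$ --- which again requires doing the computation.

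The paper avoids the explicit computation entirely by a three-step d\'evissage. First, for $G$ a torus it reduces to a torsor $X$ under $T=G/H$ and quotes the already-established identity $\beta_{T,X}(e(X))=-\eta(T,X)$ from \cite[Thm.~5.5]{BvH07}; this is essentially the special case you propose to use to fix the sign, but it carries the full weight of the sign and is itself a nontrivial computation done in the earlier paper. Second, under the hypothesis \eqref{eq:*2} that $H\mm$ embeds into $G\tor$, it passes to $Y=X/G\ssu$, a homogeneous space of the torus $G\tor$ with geometric stabilizer $\Hbar\mult$, and transfers the identity by functoriality of $e$ and $\eta$, the induced map on $H^1(k,\langle H\mm\to G\tor])$ being the identity. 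Third, in the general case Proposition~\ref{prop:main-trick-dominating} produces a morphism of pairs $(F,Z)\to(G,X)$ with $F=G\times P$, $P$ a quasi-trivial torus and $Z\to X$ a $P$-torsor, such that $(F,Z)$ satisfies \eqref{eq:*2}, and functoriality finishes the argument. If you want to salvage your approach you must either carry out the explicit cochain-level identification in full (in effect reproving \cite[Thm.~5.5]{BvH07} in greater generality), or adopt the reduction-by-functoriality scheme, at which point the explicit computation is only needed for torsors under tori.
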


We shall prove Theorem \ref{thm:eta(G,X)} by \emph{d\'evissage} in three steps,
using an idea of \cite{Bor96}.

\begin{subsec}\label{subsec:proof-torus}
We prove Theorem \ref{thm:eta(G,X)} when $G$ is a torus.
The geometric stabilizer of a point $\xb\in X(\kbar)$ does not depend on $\xb$
and is defined over $k$; we denote the corresponding $k$-group by $H$.
We have $G\tor=G$, $H\mm=H$.
Set $T=G/H$, it is a $k$-torus, and $X$ is a torsor under $T$.
We have a canonical
morphism of complexes of abelian $k$-groups $\lambda\colon\langle H\to G]\to T$,
which is a quasi-isomorphism.

We have a commutative diagram
$$
\xymatrix{
{\Ext^1}(\UPic(\Xbar), \kbar^\times)\ar[r]^{\beta_{G,X}}\ar@{=}[d]
                          &   H^1(k,\langle H\to G])\ar[d]^{\lambda_*}\\
{\Ext^1}(\UPic(\Xbar), \kbar^\times)\ar[r]^{\beta_{T,X}}
                          & H^1(k,T)
}
$$
in which all the arrows are isomorphisms.
Since $\eta(G,X)$ is functorial in $(G,X)$, we see that
$\lambda_*(\eta(G,X))=\eta(T,X)$.
By \cite[Thm.~5.5]{BvH07} $\beta_{T,X}(e(X))=-\eta(T,X)$.
It follows that $\beta_{G,X}(e(X))=-\eta(G,X)$ (because $\lambda_*$ is an isomorphism).\qed
\end{subsec}

\begin{subsec}\label{subsec:proof-(*)}
Now we prove Theorem \ref{thm:eta(G,X)} assuming that the pair $(G,X)$ satisfies the following condition:
\begin{equation}\label{eq:*2}
 H\mm \text{ \ embeds into }G\tor.
\end{equation}

Set $Y=X/G\ssu$, see \cite[Lemma 3.1]{Bor96} for a proof that this quotient exists.
Then the torus $G\tor$ acts transitively on $Y$,
and it follows from our condition \eqref{eq:*2} that the stabilizer in $\Gbar\tor$
of any point $\bar{y}\in\Ybar$ is $\Hbar\mult$.
We have a morphism of pairs  $t\colon (G,X)\to(G\tor,Y)$, which gives rise to a commutative diagram
$$
\xymatrix{
{\Ext^1}(\UPic(\Xbar), \kbar^\times)\ar[r]^{\beta_{G,X}}\ar[d]^{t_*}
                          &   H^1(k,\langle H\mm\to G\tor])\ar[d]^{t_*}\\
{\Ext^1}(\UPic(\Ybar), \kbar^\times)\ar[r]^{\beta_{G\tor,Y}}
                          & H^1(k,\langle H\mm\to G\tor]) .
}
$$
Since $G\tor$ is a torus, by \ref{subsec:proof-torus} we have
$\beta_{G\tor,Y}(e(Y))=-\eta(G\tor,Y)$.
The right hand vertical map $t_*$ is the identity map.
It follows from the functoriality of $e(X)$ and $\eta(G,X)$
that $\beta_{G,X}(e(X))=-\eta(G,X)$.\qed
\end{subsec}

 In order to complete the proof we need a construction of \cite[Section 4]{Bor96},
see also \cite[Proof of Thm. 3.5]{BCTS07}.

\begin{proposition}\label{prop:main-trick-dominating}
Let $X$ be a homogeneous space of a connected $k$-group $G$ over a field $k$
of characteristic 0.
Then there exists a morphism of pairs $(\varphi, \pi)\colon (F,Z)\to (G,X)$,
where $Z$ is a homogeneous space of a $k$-group $F$, with the following properties:

(a) $F=G\times P$, and $\varphi\colon G\times P\to G$ is the projection,
where $P$ is a quasi-trivial $k$-torus;

(b) $\pi\colon Z\to X$ is a torsor under $P$, where $P$ acts on $Z$ through the injection $P\into G\times P=F$;

(c) the pair  $(F,Z)$ satisfies the condition \eqref{eq:*2} of \S\ref{subsec:proof-(*)}.
\end{proposition}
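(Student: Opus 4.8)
The plan is to reproduce the construction of \cite[\S4]{Bor96} (see also \cite[proof of Thm.~3.5]{BCTS07}), which I now sketch. First I would choose the torus $P$ together with an auxiliary homomorphism. Fix $\xb\in X(\kbar)$ and let $\Hbar\subset\Gbar$ be its stabilizer. The Galois module $\XX(\Hbar)$ is finitely generated, hence a quotient of a permutation Galois module; so I may choose a quasi-trivial $k$-torus $P$ together with a surjection of Galois modules $\XX(\Pbar)\onto\XX(\Hbar)$. Dualizing gives a closed immersion of $k$-groups of multiplicative type $\iota\colon H\mm\into P$, where $H\mm$ is the $k$-group of \S\ref{subsec:tor-and mult}. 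Composing $\Hbar\onto\Hbar\mult=(H\mm)_{\kbar}\xrightarrow{\iota_{\kbar}}\Pbar$ produces a homomorphism $\mu\colon\Hbar\to\Pbar$ over $\kbar$; since $\Pbar$ is of multiplicative type, $\mu$ annihilates $\Hbar_1=\ker[\Hbar\to\Hbar\mult]$, and by construction $\mu$ induces a closed immersion $\Hbar\mult\into\Pbar$.

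Next I would construct the pair $(F,Z)$. Put $F=G\times P$, let $\varphi\colon F\to G$ be the projection, and let $\Hbar'=\{(h,\mu(h)^{-1}):h\in\Hbar\}$, a closed subgroup of $\Fbar=\Gbar\times\Pbar$ isomorphic to $\Hbar$ via $(h,\mu(h)^{-1})\mapsto h$. Over $\kbar$ set $\Zbar=\Hbar'\backslash\Fbar$ with its right $\Fbar$-action; the projection $\Fbar\to\Gbar$ induces an $\Fbar$-equivariant morphism $\pi\colon\Zbar\to\Hbar\backslash\Gbar=\Xbar$ which, exactly as in the proof of Proposition~\ref{prop:picg-chi-h} with $P$ in place of $\Gmbar$, is a torsor under $\Pbar=\{1\}\times\Pbar$. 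It then remains to descend $\Zbar$, the morphism $\pi$, and the $\Fbar$-action to $k$. For this I would fix a continuous family $(g_\sigma)_{\sigma\in\Gal(\kbar/k)}$ in $G(\kbar)$ with ${}^\sigma\xb=\xb g_\sigma$ as in \S\ref{subsec:cohomological-obstruction}, and equip $\Zbar$ with a semilinear $\Gal(\kbar/k)$-action compatible with the $k$-structures of $\Xbar$ and $\Fbar$; this uses both that $\iota$ is defined over $k$ (equivalently, that $\XX(\Pbar)\onto\XX(\Hbar)$ is Galois-equivariant) and that $P$ is quasi-trivial, and is carried out explicitly in \cite[\S4]{Bor96}. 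Once $Z$, $\pi\colon Z\to X$ and the $F$-action are defined over $k$, property (a) holds since $F=G\times P$ with $P$ quasi-trivial and $\varphi$ the projection, and (b) holds since $\pi$ is a $P$-torsor with $P$ acting through $\{1\}\times P\into F$.

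Finally I would verify (c). As $P$ is a torus, $(G\times P)\uu=G\uu$, so $F\red=G\red\times P$, $F\sss=G\sss$ and $F\ssu=G\ssu$; thus $\Fbar\ssu=\Gbar\ssu\times\{1\}$ and $\Fbar\tor=\Gbar\tor\times\Pbar$. Let $\bar z\in Z(\kbar)$ lie over $\xb$; its stabilizer in $\Fbar$ is $\Hbar'\cong\Hbar$, and by \S\ref{subsec:tor-and mult} condition~\eqref{eq:*2} for the pair $(F,Z)$ is equivalent to the equality $(\Hbar')_1=\Hbar'\cap\Fbar\ssu$. The inclusion $\subseteq$ is automatic, because $\mu(\Hbar_1)=1$ and $\Hbar_1\subset\Gbar\ssu$. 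For the reverse inclusion: if $(h,\mu(h)^{-1})\in\Hbar'\cap\Fbar\ssu$ then $\mu(h)=1$, so $h\in\ker[\Hbar\to\Hbar\mult]=\Hbar_1$, since $\mu$ induces a closed immersion $\Hbar\mult\into\Pbar$. Hence $(F,Z)$ satisfies \eqref{eq:*2}, which completes the argument.

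I expect the Galois-descent step for $\Zbar$ to be the main obstacle: producing a semilinear $\Gal(\kbar/k)$-action on $\Hbar'\backslash\Fbar$ that is compatible with those of $\Xbar$ and $\Fbar$ requires working with explicit cochains built from the $g_\sigma$ and the cocycle $u_{\sigma,\tau}$ of \S\ref{subsec:cohomological-obstruction}, and it is precisely here that the $k$-rationality of $\iota$ and the quasi-triviality of $P$ are used. All the remaining points — the existence of $P$ and $\iota$, the fact that $\pi$ is a $\Pbar$-torsor, and the computation in (c) — are formal or direct.
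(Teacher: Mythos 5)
Your construction over $\kbar$ and your verification of (a), (b), (c) all match the paper's, but the step you flag as ``the main obstacle'' --- descending $\Zbar=\Hbar'\backslash\Fbar$, together with $\pibar$ and the $\Fbar$-action, to $k$ --- is a genuine gap, and it cannot be closed for the torus $P$ you chose. The descent is obstructed: as the paper recalls from \cite[4.7, Lemma~4.8]{Bor96}, one has $\mathrm{Aut}_{\Fbar,\Xbar}(\Zbar)=P(\kbar)$, and a $k$-form of the triple $(\Zbar,\abar,\pibar)$ exists if and only if the class $j_*(\xi)\in H^2(k,P)$ vanishes, where $\xi\in H^2(k,H\mm)$ is the image of $\eta(G,X)$ under $H^1(k,\langle H\mm\to G\tor])\to H^2(k,H\mm)$ and $j\colon H\mm\into P$ is your embedding. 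Quasi-triviality of $P$ kills $H^1(k,P)$ but not $H^2(k,P)$, which is a product of Brauer groups of finite extensions of $k$; so for an arbitrary surjection of Galois modules $\XX(\Pbar)\onto\XX(\Hbar)$ from a permutation module there is no reason for $j_*(\xi)$ to vanish, and your semilinear Galois action on $\Zbar$ (built from the $g_\sigma$ and $u_{\sigma,\tau}$) will in general fail the cocycle condition by exactly this class.

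The missing idea is that $P$ and $j$ must be chosen \emph{after} computing $\xi$: by \cite[Lemma~3.7]{BCTS07} there exists an embedding $j\colon H\mm\into P$ into a quasi-trivial $k$-torus with $j_*(\xi)=0$, and it is this choice that makes the descent go through. With that correction the rest of your argument --- the identification of the stabilizer $\Hbar'\cong\Hbar$, the $\Pbar$-torsor structure of $\pibar$, and the check that $(\Hbar')_1=\Hbar'\cap\Fbar\ssu$ gives condition \eqref{eq:*2} --- agrees with the paper's proof.
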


\begin{proof} (cf. \cite[Proof of Thm. 3.5]{BCTS07}.)
Let $\xb\in X(\kbar)$ be a $\kbar$-point with stabilizer $\Hbar$.
Let $\mu\colon \Hbar\to \Hbar\mult$ be the canonical surjection.
Let $\xi\in H^2(k,H\mm)$ be the image of
$\eta(G,X)\in H^1(k,\langle H\mm\to G\tor])$.
By \cite[Lemma 3.7]{BCTS07} there exists an embedding $j\colon H\mm\into P$
of $H\mm$ into a quasi-trivial torus $P$ such that $j_*(\xi)=0$.

Consider the $k$-group $F=G\times P$, and the embedding
$$
\Hbar\into \Fbar \quad \text{given by} \quad h\mapsto (h, j(\mu(h))).
$$
Set $\Zbar=\Hbar\backslash \Fbar$.
We have a right action $\abar\colon \Zbar\times\Fbar\to \Zbar$
and an $\Fbar$-equivariant map
$$
\pibar\colon \Zbar\to \Xbar,\quad \Hbar\cdot (g,p)\mapsto \Hbar\cdot g,
\text{\quad where }g\in \Gbar,\ p\in \overline{P}.
$$
Then $\Zbar$ is a homogeneous space of $\Fbar$ with respect to the action $\abar$,
and the map $\pibar\colon \Zbar\to \Xbar$ is a torsor under $\Pbar$.
The homomorphism $M\to F\tor$ is injective.

In \cite[4.7]{Bor96} it was proved that
$\text{Aut}_{\Fbar,\Xbar}(\Zbar)=P(\kbar)$.
By \cite[Lemma 4.8]{Bor96} the element $j_*(\xi)\in H^2(k, P)$
is the only obstruction to the existence
of a $k$-form $(Z,a,\pi)$ of the triple $(\Zbar, \abar,\pibar)$:
there exists such a $k$-form if and only if $j_*(\xi)=0$.
In our case by construction we have $j_*(\xi)=0$, hence
there exists a $k$-form $(Z,a,\pi)$ of  $(\Zbar, \abar,\pibar)$.
This completes the proof of the proposition.

Note that in \cite{Bor96} and \cite{BCTS07} we always assumed that
$\Hbar_1$ was connected and had no nontrivial characters,
but this assumption was not used in the cited constructions.
\end{proof}

\begin{subsec}
We prove Theorem \ref{thm:eta(G,X)} in the general case.
Consider a morphism of pairs
$$
(\varphi, \pi)\colon (F,Z)\to (G,X)
$$
as in Proposition \ref{prop:main-trick-dominating}.
Since $\Pic(\Gbar)=0$ and $F=G\times P$, we have $\Pic(\Fbar)=0$.
We have a commutative diagram
$$
\xymatrix{
{\Ext^1}(\UPic(\Zbar), \kbar^\times)\ar[r]^{\beta_{F,Z}}\ar[d]^{\pi_*}
                          &   H^1(k,\langle H\mm\to F\tor])\ar[d]^{\varphi_*}\\
{\Ext^1}(\UPic(\Xbar), \kbar^\times)\ar[r]^{\beta_{G,X}}
                          & H^1(k,\langle H\mm\to G\tor]) .
}
$$
Since the pair $(F,Z)$ satisfies the condition \eqref{eq:*2},
by \ref{subsec:proof-(*)} $\beta_{F,Z}(e(Z))=-\eta(F,Z)$.
It follows easily from the functoriality of $e(X)$ and $\eta(G,X)$
that $\beta_{G,X}(e(X))=-\eta(G,X)$.
This completes the proof of Theorem \ref{thm:eta(G,X)}.
\qed
\end{subsec}

Using Theorem \ref{thm:eta(G,X)} we can give new proofs for the following results of \cite{BCTS07}.

\begin{corollary}[{\cite[Theorem~3.5]{BCTS07}}]
\label{cor:over-p-adic}
Let $k$ be a $p$-adic field.
  Let $X$ be a homogeneous space under a connected (linear) $k$-group $G$
with connected geometric stabilizers.
Then $X(k) \neq \emptyset$ if and only if $e(X) = 0$.
\end{corollary}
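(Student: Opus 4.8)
The plan is to prove the two implications separately; the implication ``$X(k)\neq\emptyset\Rightarrow e(X)=0$'' is the easy one. Indeed, a point $x_0\in X(k)$ induces a retraction of the extension of complexes of Galois modules
$$
0\to\kbar^\times\to\bigl(\sK(\Xbar)^\times\to\Div(\Xbar)\bigr)\to\UPic(\Xbar)\to 0
$$
whose class is $e(X)$, so $e(X)=0$ in $\Ext^1(\UPic(\Xbar),\kbar^\times)$. This holds over an arbitrary field and is already contained in \cite{BvH07}.

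For the converse, assume $e(X)=0$. The first step is to reduce to the case $\Pic(\Gbar)=0$, so that Theorem~\ref{thm:eta(G,X)} becomes available. By Lemma~\ref{lem:CT} there is a central extension $1\to T\to G'\to G\to 1$ with $G'$ quasi-trivial and $T$ a $k$-torus; in particular $\Pic(\Gbar')=0$. The group $G'$ acts on $X$ through $G$, so $X$ is a homogeneous space of $G'$, and its geometric stabilizer $\Hbar'$ sits in an exact sequence $1\to\Tbar\to\Hbar'\to\Hbar\to 1$ with $\Tbar$ and $\Hbar$ connected, whence $\Hbar'$ is connected. Since $e(X)$ depends only on the $k$-variety $X$, we may replace $(G,\Hbar)$ by $(G',\Hbar')$ and thus assume $\Pic(\Gbar)=0$ with $\Hbar$ connected. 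Now Theorem~\ref{thm:eta(G,X)} tells us that under the isomorphism $\Ext^1(\UPic(\Xbar),\kbar^\times)\iso H^1(k,\langle H\mm\to G\tor])$ of Proposition~\ref{cor:comp-ext1-upic-gm} the class $e(X)$ corresponds to $-\eta(G,X)$; hence $\eta(G,X)=0$.

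Finally I would invoke the main local result of \cite{Borovoi:hasse-homogeneous}: over a $p$-adic field $k$, for a homogeneous space $X$ of a connected linear $k$-group $G$ with connected geometric stabilizer, the obstruction class $\eta(G,X)$ vanishes if and only if $X(k)\neq\emptyset$. Applying this to $(G,X)$ (with $\Hbar$ connected, as arranged above) gives $X(k)\neq\emptyset$, completing the proof.

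The argument is essentially an assembly of three inputs, and the point that needs genuine attention is the reduction step: one must check that after passing to the quasi-trivial covering $G'$ the geometric stabilizer $\Hbar'$ is still connected and $\Pic(\Gbar')=0$, so that both Theorem~\ref{thm:eta(G,X)} and Borovoi's local theorem apply, while noting that $e(X)$, being intrinsic to $X$, is unchanged. Everything else is formal (the retraction argument and the functoriality of $e(X)$ and of $\eta(G,X)$) or a cited black box. Compared with the original proof in \cite[Theorem~3.5]{BCTS07}, the new ingredient is precisely Theorem~\ref{thm:eta(G,X)}, which identifies the elementary obstruction with the explicit cocycle-theoretic obstruction $\eta(G,X)$ of \cite{Borovoi:hasse-homogeneous}, thereby reducing the corollary to a local statement already known.
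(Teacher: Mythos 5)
Your proposal is correct and follows essentially the same route as the paper: reduce to $\Pic(\Gbar)=0$ (the paper cites \cite[Lemma 5.2]{Bor96} where you use Lemma~\ref{lem:CT}, but these are interchangeable here and your check that the new stabilizer stays connected is the right point to verify), then use Theorem~\ref{thm:eta(G,X)} to convert $e(X)=0$ into $\eta(G,X)=0$ and conclude by \cite[Thm.~2.1]{Borovoi:hasse-homogeneous}.
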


\begin{proof}
By \cite[Lemma 5.2]{Bor96} we may  assume that $\Pic(\Gbar)=0$.
If $X(k)\neq\emptyset$, then clearly $e(X)=0$.
Conversely, if $e(X)=0$, then by Theorem~\ref{thm:eta(G,X)} $\eta(G,X)=0$,
hence by  \cite[Thm.~2.1]{Borovoi:hasse-homogeneous} $X(k)\neq\emptyset$.
\end{proof}

\begin{corollary}[{\cite[Theorem~3.10]{BCTS07}}]
Let $k$ be a number field.
  Let $X$ be a homogeneous space under a connected (linear) $k$-group $G$
with connected geometric stabilizers.
  Assume $X(k_v) \neq \emptyset$ for every real place $v$ of $k$.
Then $X(k) \neq \emptyset$ if and only if $e(X) = 0$.
\end{corollary}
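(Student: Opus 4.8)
The plan is to run the same argument as in Corollary~\ref{cor:over-p-adic}, feeding Theorem~\ref{thm:eta(G,X)} into the number-field analogue of the arithmetic input. First I would invoke \cite[Lemma 5.2]{Bor96} to reduce to the case $\Pic(\Gbar)=0$; this reduction keeps the geometric stabilizers connected, does not disturb the hypothesis that $X(k_v)\neq\emptyset$ at the real places (which is intrinsic to the variety $X$), and leaves $e(X)$ unchanged, since $e(X)$ depends only on $X$ as a $k$-variety. The implication $X(k)\neq\emptyset\Rightarrow e(X)=0$ is then immediate: a $k$-point of $X$ splits the extension of complexes of Galois modules that defines $e(X)$.

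For the converse, assume $e(X)=0$. By Theorem~\ref{thm:eta(G,X)}, under the canonical identification $\Ext^1(\UPic(\Xbar),\kbar^\times)\iso H^1(k,\langle H\mm\to G\tor])$ of Proposition~\ref{cor:comp-ext1-upic-gm}, the class $e(X)$ goes to $-\eta(G,X)$; hence $\eta(G,X)=0$. I would then apply the Hasse-principle theorem for homogeneous spaces with connected geometric stabilizers over a number field from \cite{Borovoi:hasse-homogeneous}: since $X(k_v)\neq\emptyset$ for every real place $v$ and $\eta(G,X)=0$, that theorem gives $X(k)\neq\emptyset$.

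The real content --- that vanishing of $\eta(G,X)$ together with solvability at the real places forces a global point --- lies entirely in the cited theorem of \cite{Borovoi:hasse-homogeneous}; the contribution of the present paper is only the identification $e(X)=-\eta(G,X)$ provided by Theorem~\ref{thm:eta(G,X)}. Consequently the only thing needing care is bookkeeping: verifying that the hypotheses of the cited arithmetic theorem (number field, connected geometric stabilizers, real local points, and the normalization $\Pic(\Gbar)=0$) are precisely what is available after the initial reduction, and noting that the sign in $e(X)=-\eta(G,X)$ is irrelevant here since only the vanishing is used.
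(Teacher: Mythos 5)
Your proposal is correct and follows exactly the paper's route: reduce to $\Pic(\Gbar)=0$ via \cite[Lemma 5.2]{Bor96}, use Theorem~\ref{thm:eta(G,X)} to convert $e(X)=0$ into $\eta(G,X)=0$, and then invoke the number-field Hasse-principle result of \cite{Borovoi:hasse-homogeneous} (the paper cites \cite[Cor.~2.3]{Borovoi:hasse-homogeneous}, the analogue for number fields with real local points, in place of \cite[Thm.~2.1]{Borovoi:hasse-homogeneous} used in the $p$-adic case). The paper's proof is literally ``similar to Corollary~\ref{cor:over-p-adic} with that one substitution,'' which is what you wrote out.
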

\begin{proof}
Similar to the proof of Corollary \ref{cor:over-p-adic},
but using \cite[Cor.~2.3]{Borovoi:hasse-homogeneous}
instead of \cite[Thm.~2.1]{Borovoi:hasse-homogeneous}.
\end{proof}



\begin{thebibliography}{BvH2}

\bibitem[Brl]{Borel}
A.~Borel,  \emph{Groupes d'homotopie des groupes de Lie, I},
  S\'eminaire Henri Cartan, tome 2 (1949-1950), Expos\'e No. 12, p. 1--8.

\bibitem[Bo1]{Bor96} M.~Borovoi, \emph{The Brauer-Manin obstruction
    for homogeneous spaces with connected or abelian stabilizer}, J.
  reine angew. Math.  \textbf{473} (1996), 181--194.

\bibitem[Bo2]{Borovoi:Memoir}
M.~Borovoi, \emph{Abelian Galois cohomology of reductive groups}, Mem. Amer.
  Math. Soc. \textbf{132} (1998), no.~626.

\bibitem[Bo3]{Borovoi:hasse-homogeneous}
  M.~Borovoi, \emph{A cohomological
    obstruction to the Hasse principle for homogeneous spaces}, Math.
  Ann.  \textbf{314} (1999), 491--504.

\bibitem[Bo4]{Borovoi-vanishing}
M.~Borovoi,
\emph{Vanishing of algebraic Brauer-Manin obstructions,}
J. Ramanujan Math. Soc. {\bf 26} (2011), 333--349.

\bibitem[BCS]{BCTS07}
M. Borovoi, J.-L. Colliot-Th\'el\`ene and A.N. Skorobogatov,
\emph{The elementary obstruction and homogeneous spaces},
 Duke Math. J. {\bf 141} (2008), 321--364.

\bibitem[BD]{BD}
M. Borovoi and C. Demarche,
\emph{Manin obstruction to strong approximation for homogeneous spaces},
To appear in Comment. Math. Helv.

\bibitem[BvH1]{BvH06} M.~Borovoi and J.~van Hamel, \emph{Extended
    Picard complexes for algebraic groups and homogeneous spaces},
  C.R. Acad. Sci. Paris, Ser. I \textbf{342} (2006) 671--674.

\bibitem[BvH2]{BvH07} M.~Borovoi and J.~van Hamel, \emph{Extended
    Picard complexes and linear algebraic groups},
J. reine angew. Math. {\bf 627} (2009), 53--82.

\bibitem[CT]{CT06}
J.-L. Colliot-Th\'el\`ene,
\emph{R\'esolutions flasques des groupes lin\'eaires connexes},
 J. reine angew. Math. {\bf 618} (2008), 77-133.

\bibitem[CS]{CT-Sansuc}
J.-L. Colliot-Th\'el\`ene and J.-J. Sansuc, \emph{La descente sur les
  vari\'et\'es rationnelles, II}, Duke Math. J. \textbf{54} (1987), 375--492.

\bibitem[De]{Demarche}
C. Demarche,
\emph{Une formule pour le groupe de Brauer alg\'ebrique d'un torseur},
To appear in J.~Algebra.

\bibitem[FI]{Fossum-Iversen} R.~Fossum and B.~Iversen, \emph{On
Picard groups of algebraic fibre spaces}, J. Pure Appl. Algebra
\textbf{3} (1973), 269--280.

\bibitem[GM]{GM}
S.I.~Gelfand and Yu.I.~Manin, \emph{Methods of Homological Algebra,}
Springer-Verlag, Berlin 1996.

\bibitem[Ha]{Hartshorne}
R.~Hartshorne, \emph{Algebraic Geometry,} Springer-Verlag, Berlin 1977.

\bibitem[KKV]{KKV}
F.~Knop, H.~Kraft and T.~Vust, \emph{The Picard group of a $G$-variety},
in: Algebraische Trans\-formations\-gruppen und Invarianten\-theorie,
DMV Sem. \textbf{13}, Birkh\"auser, Basel, 1989, pp. 77--87.

\bibitem[Ko]{Kot84}
R.E.~Kottwitz, \emph{Stable trace formula: Cuspidal tempered terms},
Duke Math. J. \textbf{51} (1984), 611--650.

\bibitem[Mo]{Mostow}
G.D.~Mostow, \emph{Fully reducible subgroups of algebraic groups,}
 Amer. J. Math. {\bf 78} (1956), 200--221.

\bibitem[MFK]{Mumford}
D.~Mumford,  J.~Fogarty and F.~Kirwan,
\emph{Geometric invariant theory},
3d  Enlarged Ed.,
Ergebnisse der Mathematik und ihrer Grenzgebiete, Band 034,
Berlin, Springer-Verlag, 1994.

\bibitem[Po]{Popov} V.L.~Popov, \emph{The Picard groups of
homogeneous spaces of linear algebraic groups and one-dimensional
homogeneous vector bundles}, Math. USSR Izvestija {\bf 8} (1974), 301--327.

\bibitem[Sa]{Sansuc}
J.-J. Sansuc, \emph{Groupe de Brauer et arithm\'etique des groupes
 alg\'ebriques lin\'eaires sur un corps de nombres}, J. Reine Angew. Math.
  \textbf{327} (1981), 12--80.

\bibitem[Sh]{Shafarevich}
I.R. Shafarevich, \emph{Basic Algebraic Geometry},
 2nd rev. ed., Springer-Verlag, Berlin,  1994.

\bibitem[We]{Weibel} C.A.~Weibel, \emph{An Introduction to
    Homological Algebra}, Cambridge University Press, Cambridge, 1994.

\end{thebibliography}
\end{document}